\newcommand{\ev}{{\sf ev}}
\newcommand{\Sh}{{\sf Sh}}
\renewcommand{\d}{{\rm{d}}}
\newcommand{\ra}{{\rightarrow}}
\newcommand{\Z}{\mathbb{Z}}
\newcommand{\sL}{\mathcal{L}}
\newcommand{\Hom}{{\sf Hom}}
\newcommand{\sym}{{\sf sym}}
\newcommand{\id}{{\sf id}}
\newcommand{\bone}{{\mathds{1}}}
\newcommand{\imag}{{\sf Im}}
\newtheorem{thm}{Theorem}[section]
\newtheorem{lem}[thm]{Lemma}
\newtheorem{conj}[thm]{Conjecture}
\newtheorem{cor}[thm]{Corollary}
\newtheorem{prop}[thm]{Proposition}
\newtheorem{rmk}[thm]{Remark}
\newtheorem{defi}[thm]{Definition}
\title{The inverse function theorem for curved L-infinity spaces }
\author[Amorim]{Lino Amorim}
\address{Department of Mathematics\\ Kansas State University\\ 138 Cardwell Hall, 1228 N. 17th Street\\
	Manhattan, KS 66506\\ USA}
\email{lamorim@ksu.edu}
\author[Tu]{Junwu Tu}
\address{Institute of Mathematical Sciences\\ ShanghaiTech University\\ 393 Middle Huaxia Road, Pudong New District, Shanghai, China, 201210.}
\email{tujw.at.shanghaitech.edu.cn}
\begin{document}

\begin{abstract}
	In this paper we prove an inverse function theorem in derived differential geometry. More concretely, we show that a morphism of curved $L_\infty$ spaces which is a quasi-isomorphism at a point has a local homotopy inverse. This theorem simultaneously generalizes the inverse function theorem for smooth manifolds and the Whitehead theorem for $L_\infty$ algebras. The main ingredients are the obstruction theory for $L_\infty$ homomorphisms (in the curved setting) and the homotopy transfer theorem for curved $L_\infty$ algebras. Both techniques work in the $A_\infty$ case as well. 
\end{abstract}
	
\keywords{$L_\infty$ space, inverse function theorem, Derived differential geometry}

\subjclass[2020]{14A30, 18N99, 58H15}

	\maketitle

\section{Introduction}

\subsection{Curved $L_\infty$ spaces} The notion of \emph{curved $L_\infty$ space} was introduced by Costello~\cite{Cos}, as an alternative approach to derived differential geometry. In {\sl loc. cit.} an $L_\infty$ space is defined as a pair $(M,\mathfrak{G})$ where $M$ is a smooth manifold, and $\mathfrak{G}$ is a \emph{curved $L_\infty$ algebra}\footnote{See \cite{marklB} for a definition.} over the de Rham algebra $\Omega_M^*$. In this paper, we shall work with a more down-to-earth notion of $L_\infty$ space, following analogous constructions in the theory of dg-schemes by Behrend~\cite{Beh1}\cite{Beh2},  and Ciocan-Fontanine--Kapranov~\cite{CFK1}\cite{CFK2}. This notion is however equivalent to the original one -  see~\cite{Tu} for a proof of the equivalence. Interestingly, a related concept also appeared in the study of deformations of coisotropic submanifolds in symplectic geometry\cite{OP,LO}. Conceptually, the $L_\infty$ approach to derived geometric structures is Koszul dual to the more classical approach using dg (or simplicial) commutative algebras as developed by To\"en--Vezzosi~\cite{TV}. Such structures naturally appear in various gauge theories, producing $L_\infty$-enhancements of the associated Maurer--Cartan moduli spaces. 

More precisely, throughout the paper, an $L_\infty$ space $\mathbb{M}=(M, \mathfrak{g})$ is given by a pair  of $M$ a smooth manifold and $\mathfrak{g}$ a curved $L_\infty$ algebra over the ring of smooth functions $C^\infty(M)$. We also require that $\mathfrak{g}$ is of the form
\[ \mathfrak{g}= \mathfrak{g}_2\oplus \mathfrak{g}_3\oplus\cdots\oplus \mathfrak{g}_d,\]
where each $\mathfrak{g}_i$ is a vector bundle in degree $i$. In particular it has minimal degree  $2$ and maximal degree $d$ for some $d\geq 2$. Conceptually, this grading condition reflects the fact that we are interested in derived schemes, not stacks. 

The main goal of this paper is to understand when maps of $L_\infty$ spaces are ``invertible". Let us introduce some terminology and notation in order to describe our results. Throughout the paper, we use the notation
\[ \mu_k: \mathfrak{g}[1] \otimes \cdots\otimes \mathfrak{g}[1] \to \mathfrak{g}[1], \;\; k \geq 0\]
to denote the (shifted) higher brackets of an $L_\infty$ algebra. We shall use the same notation in the $A_\infty$ case as well. Note that in the $L_\infty$ case, the maps $\mu_k$ are all graded symmetric by definition. Given an $L_\infty$ space $\mathbb{M}=(M, \mathfrak{g})$, the curvature term $\mu_0\in \mathfrak{g}_2$ is a section of the bundle $\mathfrak{g}_2$. Let $p\in \mu_0^{-1}(0)$ be a point in its zero locus. The tangent complex at $p$ is defined as the following chain complex
\[ T_p \mathbb{M}:= T_pM \stackrel{\d\mu_0|_p}{\longrightarrow}  \mathfrak{g}_2|_p \stackrel{\mu_1|_p}{\longrightarrow} \mathfrak{g}_3|_p\stackrel{\mu_1|_p}{\longrightarrow}\cdots\]
The fact that this is a chain complex follows from the $L_\infty$ algebra equation together with $\mu_0(p)=0$. A morphism between two curved $L_\infty$ spaces $\mathbb{M}=(M,\mathfrak{g})$ and $\mathbb{N}=(N,\mathfrak{h})$ is given by a pair $\mathfrak{f}=(f,f^\sharp)$ where $f: M\ra N$ is a smooth map and $f^\sharp=(f^\sharp_1,f^\sharp_2,\ldots): \mathfrak{g}\ra f^*\mathfrak{h}$ is a sequence of bundle maps which define a $L_\infty$ homomorphism.

Let $\mathfrak{f}: \mathbb{M}\ra \mathbb{N}$ be a morphism and $p\in M$ a point in the zero locus of the curvature of $\mathfrak{g}$. This morphism induces a map between the tangent complexes $d\mathfrak{f}_p: T_p\mathbb{M} \ra T_{f(p)} \mathbb{N}$, explicitly given by
\[ \begin{CD}
 T_pM @>>> \mathfrak{g}_2|_p @>>>  \mathfrak{g}_3|_p @>>> \cdots \\
 @V df_p VV    @V f_1^\sharp VV      @V f_1^\sharp VV     @. \\
T_{f(p)} N @>>> \mathfrak{h}_2|_{f(p)} @>>> \mathfrak{h}_3|_{f(p)} @>>>  \cdots
\end{CD}\]
We are now ready to state our first main result which states that if $d\mathfrak{f}_p$ is a quasi-isomorphism of chain complexes then $\mathfrak{f}$ is ``locally invertible".

\begin{thm}\label{thm:main}
	Let $(M,\mathfrak{g})$ and $(N,\mathfrak{h})$ be  $L_\infty$ spaces and $\mathfrak{f}=(f, f^\sharp): (M,\mathfrak{g}) \to (N,\mathfrak{h})$ be a $L_\infty $ morphism. Assume that the tangent map $d\mathfrak{f}_p$ is a quasi-isomorphism at $p\in (\mu_0^\mathbb{M})^{-1}(0)$. Then there exist open neighborhoods $U$ of $p$ and $V$ of $f(p)$ such that the restriction
\[ \mathfrak{f}|_U: (U, \mathfrak{g}|_U) \ra (V, \mathfrak{h}|_V)\]
is a homotopy equivalence\footnote{The notion of homotopy between $L_\infty$ spaces is formulated in Definition~\ref{def:homotopy}.} of $L_\infty$ spaces.
\end{thm}

In the case when both $\mathfrak{g}$ and $\mathfrak{h}$ are trivial, this is simply the inverse function theorem for smooth manifolds. 
When the $L_\infty$ bundle $\mathfrak{g}$ is concentrated in degree $2$, we obtain the notion of $m$-Kuranishi neighborhood in the work of Joyce \cite{Joy}, or Kuranishi chart (with trivial isotropy) introduced by Fukaya--Oh--Ohta--Ono~\cite{FOOO}. In this special case, our theorem essentially recovers Theorem 4.16 in \cite{Joy}, but with a slightly different notion of homotopy.
In the case when both $M$ and $N$ are a point, the above theorem recovers the well known fact (see~\cite{Pro} and~\cite{LodVal}) that quasi-isomorphisms between (uncurved) $L_\infty$ algebras are homotopy equivalences - this statement is referred to as the Whitehead theorem for  $L_\infty$ algebras in \cite{FOOO}. 

In forthcoming work, we shall use the complex analytic version of the above theorem to prove local invariance of the Maurer--Cartan moduli space associated with {\em bounded} $L_\infty$ algebras (see~\cite{Tu}). This result is essential to understand $L_\infty$ enhancements of moduli spaces from gauge theory, such as moduli spaces of flat connections on a vector bundle.

We also expect a global version of Theorem~\ref{thm:main} to hold. We formulate it in the following conjecture. 

\begin{conj}
Let $\mathfrak{f}=(f, f^\sharp): (M,\mathfrak{g}) \to (N,\mathfrak{h})$ be a $L_\infty $ morphism. Assume that the tangent map $d\mathfrak{f}_p$ is a quasi-isomorphism for all $p\in (\mu_0^\mathbb{M})^{-1}(0)$, and that the induced map $f: (\mu_0^\mathbb{M})^{-1}(0) \to (\mu_0^\mathbb{N})^{-1}(0)$ on the zero loci is a bijection. Then there exist open neighborhoods $U$ of $(\mu_0^\mathbb{M})^{-1}(0)$ and $V$ of $(\mu_0^\mathbb{N})^{-1}(0)$ such that the restriction
\[ \mathfrak{f}|_U: (U, \mathfrak{g}|_U) \ra (V, \mathfrak{h}|_V)\]
is a homotopy equivalence\footnote{See Definition \ref{def:homotopy} for homotopy equivalence of $L_\infty$ spaces. In this global setting it requires the existence of torsion-free, flat connections on $M$ and $N$. } of $L_\infty$ spaces.
\end{conj}

At this moment we are able to prove this conjecture on the special case where both $\mathfrak{g}$ and $\mathfrak{h}$ are concentrated in degree $2$. Our proof of this result uses a  partition-of-unity argument similar to the one used by Joyce in \cite{Joy} where an analogous result is proved for $m$-Kuranishi spaces. 

The second main result of the paper is the construction of a {\em minimal chart} around a point $p\in \mu_0^{-1}(0)\subset M$. This construction generalizes the so-called minimal model construction for (uncurved) $L_\infty$ algebras (see \cite{DHR, V, LodVal}). More precisely, we have the following

\begin{thm}\label{thm:main2}
Let $\mathbb{M}=(M,\mathfrak{g})$ be a $L_\infty$ space and $p\in \mu_0^{-1}(0)$. Then there exists a $L_\infty$ space $\mathbb{W}=(W,\mathfrak{h})$ with $W\subset M$ a submanifold containing $p$, such that
\begin{itemize}
\item The tangent complex $T_p\mathbb{W}$ has zero differential.
\item There exists an open neighborhood $U\subset M$ of $p$  which contains $W$ and such that the inclusion map $i: W \to U$ extends to a homotopy equivalence
\[ (i, i^\sharp): (W,\mathfrak{h}) \ra (U, \mathfrak{g}|_U)\]
of $L_\infty$ spaces. Here the map $i^\sharp: \mathfrak{h}\ra i^*\mathfrak{g}$ is an $L_\infty$ homomorphism constructed explicitly using summation over trees. We refer to Subsection~\ref{subsec:minimalchart} for details.
\end{itemize}
\end{thm}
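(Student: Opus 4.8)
The plan is to combine two reductions whose fibers are pinned down by the cohomology of the tangent complex at $p$. Fixing metrics on $TM$ and on each bundle $\mathfrak{g}_i$, I would first take a Hodge-type decomposition of the tangent complex $T_p\mathbb{M}$: the harmonic spaces $\mathcal{H}^1=\ker(d\mu_0|_p)\subset T_pM$ and, for $i\ge 2$, $\mathcal{H}^i\subset\mathfrak{g}_i|_p$ representing $H^i(T_p\mathbb{M})$ are exactly the fibers that the minimal chart $\mathbb{W}=(W,\mathfrak{h})$ must have, namely $T_pW=\mathcal{H}^1$ and $\mathfrak{h}_i|_p=\mathcal{H}^i$. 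The degree-$1$ (manifold) direction and the degrees $\ge 2$ (bundle) directions are reduced by genuinely different mechanisms, so I would treat them in two steps and then assemble the morphism from the curved homotopy transfer formula established earlier in the paper.

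\emph{Geometric step.} Extend the splitting $\mathfrak{g}_2|_p=\operatorname{im}(d\mu_0|_p)\oplus\mathcal{C}_2$ to a smooth splitting $\mathfrak{g}_2|_U=\mathcal{I}\oplus\mathfrak{g}_2'$ near $p$ with $\mathcal{I}|_p=\operatorname{im}(d\mu_0|_p)$. The composite $\operatorname{pr}_{\mathcal{I}}\circ\mu_0\colon U\ra\mathcal{I}$ is a submersion at $p$ onto $\mathcal{I}|_p$, so by the implicit function theorem its zero locus is, after shrinking, a submanifold $W\subset U$ with $T_pW=\ker(d\mu_0|_p)=\mathcal{H}^1$. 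On $W$ the restricted curvature takes values in $\mathfrak{g}_2'$ and has vanishing linearization at $p$; this is precisely the smooth inverse-function-theorem content and reduces the differential $d\mu_0$ out of the tangent complex while splitting off $\operatorname{im}(d\mu_0|_p)$ from $\mathfrak{g}_2$. A tubular neighborhood of $W$ in $U$ supplies the smooth retraction $r\colon U\ra W$ that will serve as the base map of a homotopy inverse.

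\emph{Algebraic step.} Over (a possibly smaller) $W$ I would produce \emph{curved} homotopy retraction data for the pair $(\mathfrak{g}'_\bullet|_W,\mu_1)$ onto subbundles $\mathfrak{h}_i$ with $\mathfrak{h}_i|_p=\mathcal{H}^i$, where $\mathfrak{g}_2'$ replaces $\mathfrak{g}_2$. At $p$ the curvature vanishes, so $\mu_1|_p^2=0$ and the Hodge decomposition is honest; by openness of invertibility the Green's operator for the Laplacian persists near $p$, yielding smooth maps $\iota,\pi,h$. Off $p$ the operator $\mu_1$ no longer squares to zero (the $L_\infty$ relations give $\mu_1^2\sim\mu_2(\mu_0,-)$), so these are genuinely curved retraction data, and I would feed them into the curved homotopy transfer formula proved earlier to define $\mathfrak{h}=(\mathfrak{h}_i,\mu_k^{\mathfrak{h}})$ together with $i^\sharp\colon\mathfrak{h}\ra i^*\mathfrak{g}=\mathfrak{g}|_W$ as a sum over trees. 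Because the grading is bounded between $2$ and $d$, each $\mu_k^{\mathfrak{h}}$ and each $i^\sharp_k$ is a \emph{finite} sum of smooth bundle maps, so there are no convergence issues and smoothness is automatic. Minimality is immediate: the transferred structure lives on harmonic representatives, so $T_p\mathbb{W}$ has zero differential.

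Finally I would upgrade the quasi-isomorphism to a homotopy equivalence. By construction $di_p$ is the inclusion of harmonic representatives, hence an isomorphism on cohomology, so $i^\sharp$ is a quasi-isomorphism of tangent complexes. To avoid circularity with Theorem~\ref{thm:main}, rather than invoking it I would assemble a homotopy inverse directly: the $\pi$-side of the curved transfer, together with the smooth retraction $r\colon U\ra W$, gives a candidate morphism $(r,r^\sharp)\colon(U,\mathfrak{g}|_U)\ra(W,\mathfrak{h})$, and the obstruction theory for $L_\infty$ homomorphisms developed earlier lets me correct it order by order so that the two composites are homotopic to the identities in the sense of Definition~\ref{def:homotopy}. \textbf{The main obstacle} is exactly this algebraic step carried out over a full neighborhood: since $\mu_1$ fails to be a differential away from $p$, I must control the chosen harmonic subbundles where the rank of $\mu_1$ may jump and verify that the curved retraction identities — not the classical side conditions — suffice for the tree sums to define a bona fide $L_\infty$ morphism that is invertible up to homotopy. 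I expect the finiteness of the tree expansion and the vanishing of the curvature at $p$ to be the two facts that make this control possible.
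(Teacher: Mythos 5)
Your proposal follows essentially the same two-step route as the paper: first cut $M$ down to $W=\ker(\mathrm{pr}\circ\mu_0)$ by the implicit function theorem so that $\nabla\mu_0$ vanishes at $p$ (the paper does this one rank at a time in Proposition~\ref{prop:minimal_mu}, splitting $\mathfrak{g}_2=E_2\oplus C_2$ with $\mu_0=(\nu,y)$), then apply the curved homotopy transfer of Theorem~\ref{teorema B1} to a retraction of $\mathfrak{g}_{\geq 2}$ onto subbundles representing the cohomology of $T_p\mathbb{M}$, and finally upgrade the resulting quasi-isomorphism to a homotopy equivalence via the obstruction theory. Two remarks on the points where you are vaguer than the paper. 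First, the ``main obstacle'' you flag (rank jumping of $\mu_1$ away from $p$) is resolved in Theorem~\ref{thm:minimal} not by Green's operators --- the fiberwise harmonic spaces $\ker\Delta_x$ need not have constant rank, so they do not directly form a bundle --- but by fixing at $p$ a complement $E_{k+1}$ to $\imag(\mu_1|_p)$, extending it to a subbundle, and using that surjectivity of $\mu_1:\mathfrak{g}_k\to\mathfrak{g}_{k+1}/E_{k+1}$ is an open condition; this yields constant-rank bundles $A_k\oplus B_k\oplus C_k$ with $\epsilon=\mu_1|_{B\to C}$ invertible, and $H=-\epsilon^{-1}$ is the curved homotopy satisfying \eqref{eq:Hform}. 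Your Hodge-theoretic choice can be repaired along the same lines, but as written the ``openness of invertibility'' step does not produce the smooth subbundles you need. Second, your worry about circularity is unfounded: the paper does not invoke Theorem~\ref{thm:main} here but rather Theorem~\ref{thm:A1-spaces} (an $L_\infty$ space morphism is a homotopy equivalence iff it is an $L_{(1)}$ homotopy equivalence), which is established beforehand by obstruction theory; exhibiting an explicit $L_{(1)}$ homotopy inverse (the paper's $(P,P^\sharp_1)$ with homotopy $h_1^t$) and citing that theorem is exactly your ``order-by-order correction,'' packaged once and for all.
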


This theorem is one of the main ingredients in our proof of Theorem \ref{thm:main} but we expect it will have many other applications in derived differential geometry. For instance, the analogous statement in Derived Algebraic Geometry is an important step in the proof of the Darboux theorem for shifted symplectic derived schemes in \cite{BBJ}.

\subsection{About the proofs} The proof of Theorem~\ref{thm:main2} generalizes the homotopy transfer theorem (sometimes also called homological perturbation lemma) to the curved setting. We would like to point out that we do not impose any conditions on the curvature term, unlike the filtered case considered in \cite{F, FOOO}. In that case, one assumes the algebra is equipped with a filtration and the curvature term lives in the positive part of the filtration. For these filtered algebras the transfer theorem was stated in \cite{F} and proved in detail for $A_\infty$ algebras in \cite{FOOO}. Our theorem is valid for general curved $A_\infty$ or $L_\infty$ algebras in the presence of a generalization of the usual homotopy retraction data $(i, p, H)$. We refer to Section~\ref{sec:transfer} for more details, but for example, the homotopy operator $H$ must satisfy
\[ \mu_{1}H+H \mu_{1} = ip -\id - H \mu_1^2 H,\]
which reduces to the usual homotopy identity in the uncurved case since $\mu_0=0$ implies that $\mu_1^2=0$. 
Remarkably, both the minimal $L_\infty$ structure on $\mathfrak{h}$ and $L_\infty$ homomorphism $i^\sharp$ are given by summation over the same stable trees as in the uncurved setting, and the curvature term does not play a big role (see Section~\ref{sec:transfer}). In that sense the construction is closer to the uncurved case than to the filtered one studied in \cite{F, FOOO}.

For the proof of Theorem~\ref{thm:main} we first develop an obstruction theory for $L_\infty$  (or $A_\infty$ ) homomorphisms between curved $L_\infty$ (respectively $A_\infty$ ) algebras, again without making any additional assumptions on the ground ring or on the curvature term. Since we are in the general curved setting (and therefore have no good notion of convergence), the homomorphisms we consider are by definition strict (in the terminology of \cite{FOOO}), meaning the constant terms $f_0$ vanish. Therefore our theory is once again closer to the uncurved case than the filtered case for which an analogous theory was developed in~\cite{FOOO}.

Our obstruction theory is set up in a way which easily generalizes from $L_\infty$ algebras to spaces. Combining this new obstruction theory with Theorem~\ref{thm:main2}  we  prove Theorem~\ref{thm:main} using an argument analogous to that in~\cite{FOOO}. 

We present our results in the smooth realm, meaning all the manifolds (and vector bundles, maps,...) are real $C^\infty$ manifolds. But our proofs and constructions also work in the complex analytic setting, with only minor modifications, see Remark \ref{rmk:analytic_homotopy}.

\subsection{Other related works} A recent preprint~\cite{BLX} by Behrend--Liao--Xu obtained  similar results, using the framework of categories of fibrant objects. While the definitions of $L_\infty$ spaces~\footnote{In {\sl Loc. Cit.} the authors use derived manifolds.} and the tangent complexes are clearly the same as ours, it is not clear at the moment of writing how Behrend--Liao--Xu's notion of homotopy between morphisms of $L_\infty$ spaces is related to ours (see Definition~\ref{def:homotopy}). One could say our approach is more algebraic in the sense that both the obstruction theory and the homological perturbation technique are generalizations of the situation for uncurved algebras.

An interesting question is whether Theorem~\ref{thm:main} and Theorem~\ref{thm:main2} admit generalizations to allow the tangent complex to have components in non-positive degrees. This is related to the notion of  {\em shifted Lie algebroid structure} studied by Pym--Safronov~\cite{PS}.

\subsection{Organization of the paper} In Section~\ref{sec:obstruction}, we develop the obstruction theory for constructing $A_\infty$/$L_\infty$ homomorphisms between curved $A_\infty$/$L_\infty$ algebras. Section~\ref{sec:transfer} generalizes the homotopy transfer theorem to the curved setting. In Section~\ref{sec:spaces} we recall basic definitions of $L_\infty$ spaces. In particular, we explicitly describe homotopies between $L_\infty$ morphisms. In Section~\ref{sec:theorems} we prove Theorem~\ref{thm:main} and Theorem ~\ref{thm:main2}.

\subsection{Acknowledgments.} We are grateful to Jim Stasheff for sending us his comments and suggestions of an earlier version of the paper.

\section{Obstruction theory}\label{sec:obstruction}

Let $(A,\mu_0^A,\mu_1^A,\cdots)$ and $(B,\mu_0^B,\mu_1^B,\cdots)$ be two curved $A_\infty$/$L_\infty$ algebras over a commutative ring $R$. In this section, we study the obstruction theory of $A_\infty$/$L_\infty$ homomorphisms from $A$ to $B$. Classically, in the non-curved case, this is done by using a pro-nilpotent $L_\infty$ algebra structure on the space $\Hom( T^c A[1], B)$ of cochains on $A$ with values in $B$. In particular, the obstruction class to construct the $(n+1)$-th component of a $A_\infty$/$L_\infty$ homomorphism $(f_j)_{j=1}^n$ from  $A$ to $B$ is a cohomology class
\[ \mathfrak{o}( (f_j)_{j=1}^n) \in H^1\big( \Hom( A[1]^{n+1}, B[1]),d=[\mu_1,-] \big),\]   
determined by the first $n$ components. However, adding the curvature term spoils the pro-nilpotent structure, and obviously the above obstruction space is not even defined as $\mu_1^A$ and $\mu_1^B$ might not square to zero. In this section, we define a variant of the above obstruction space which takes into account the appearance of curvatures, which allows us to extend the obstruction theory of $A_\infty$/$L_\infty$ homomorphisms to the curved setting.

\subsection{Definition of obstruction spaces.}\label{subsec:obs}
First, using the curvature term $\mu_0^A$, we form the complex $C(A,B)$ as follows
\begin{align*}
& C(A,B)  := \cdots \ra \Hom(A[1]^{\otimes k}, B[1]) \stackrel{\delta}{\ra} \Hom(A[1]^{\otimes k-1}, B[1]) \ra \cdots \ra B[1] \ra 0 \\
& \delta (\phi_k) (a_1,\ldots,a_{k-1}) := \sum_{j=0}^{k-1} (-1)^{|\phi_k|'+|a_1|'+\cdots+|a_{j}|'}\phi_k(a_1,\ldots,a_j,\mu_0^A,a_{j+1},\ldots,a_{k-1}).
\end{align*}
One verifies that $\delta^2=0$. We shall denote its cohomology by $D^k(A,B)$ where $k$ is the tensor degree. Next, using the operators $\mu_1^A$ and $\mu_1^B$, we define another operator $d$ as
\begin{align*}
& d: \Hom (A[1]^{\otimes k}, B[1]) \ra \Hom(A[1]^{\otimes k}, B[1]) \\
& d(\phi_k)(a_1,\ldots,a_k):=\mu_1^B\phi_k(a_1,\ldots,a_k)- \sum_{j=1}^k (-1)^{|\phi_k|'+\star}\phi_k(a_1,\ldots,\mu_1^A(a_j),\ldots,a_k)
\end{align*}
 where $\star= |a_1|'+\cdots+|a_{j-1}|'$. One can readily verify that $d\delta+\delta d=0$. Thus $d$ induces a map on the $\delta$-cohomology, i.e. we obtain maps
\[ d: D^k(A,B) \ra D^k(A,B), \;\; \forall k\geq 0.\]
In complete generality we don't have $d^2=0$. However, we have the following
\begin{lem}
If there exists a $R$-linear map $f: A \ra B$ such that $f(\mu_0^A)=\mu_0^B$, then  the composition $d^2: D^k(A,B) \ra D^{k}(A,B)$ equals  zero.
\end{lem}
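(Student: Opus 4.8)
The plan is to show that $d^{2}$ is null-homotopic for $\delta$: I will construct an operator $h$ raising the tensor degree by one with
\[ d^{2} = \delta h + h\delta \]
as endomorphisms of $C(A,B)$. Granting this, any $\delta$-closed $\phi_{k}$ satisfies $d^{2}\phi_{k} = \delta(h\phi_{k})$, which is $\delta$-exact, so $d^{2}$ induces the zero map on $D^{k}(A,B)$, which is exactly the assertion.

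First I would compute $d^{2}$ explicitly. Writing $d = L - R$, where $L(\phi) = \mu_{1}^{B}\circ\phi$ and $R(\phi) = \sum_{j}(-1)^{|\phi|'+\star}\phi(\dots,\mu_{1}^{A}(a_{j}),\dots)$, one has $d^{2} = L^{2} - LR - RL + R^{2}$. Since $L$ raises the internal degree of $\phi$ by one, the sign convention in the definition of $d$ forces $RL = -LR$, so the cross terms cancel; after the standard pairwise cancellation of insertions of $\mu_{1}^{A}$ in distinct slots, the surviving diagonal contributions are
\[ d^{2}\phi(a_{1},\dots,a_{k}) = (\mu_{1}^{B})^{2}\phi(a_{1},\dots,a_{k}) - \sum_{j}\pm\,\phi\big(a_{1},\dots,(\mu_{1}^{A})^{2}(a_{j}),\dots,a_{k}\big). \]

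Next I would feed in the quadratic curved $A_\infty$ relation $(\mu_{1})^{2} = -\mu_{2}(\mu_{0}\otimes\id) - \mu_{2}(\id\otimes\mu_{0})$ (with the analogous symmetrized identity in the $L_\infty$ case) for both $A$ and $B$. This splits $d^{2}\phi$ into \emph{interior} terms, in which $\mu_{0}^{A}$ is paired with an input $a_{j}$ through $\mu_{2}^{A}$, and \emph{exterior} terms, in which $\mu_{0}^{B}$ is paired with the output $\phi(a_{1},\dots,a_{k})$ through $\mu_{2}^{B}$. For the interior terms I would let $h_{A}(\phi)$ merge two adjacent inputs through $\mu_{2}^{A}$; then in $\delta h_{A} + h_{A}\delta$ the summands where the inserted $\mu_{0}^{A}$ is the one merged reproduce the interior terms, and all other summands cancel in pairs. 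No hypothesis on $f$ is needed here.

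The exterior terms are where the hypothesis is indispensable, since $\delta$ can only ever insert the curvature $\mu_{0}^{A}$ of $A$. To realize $\mu_{2}^{B}(\mu_{0}^{B},\phi(a_{1},\dots,a_{k}))$ as a boundary I would set
\[ h_{B}(\phi)(a_{1},\dots,a_{k+1}) = \pm\,\mu_{2}^{B}\big(f(a_{1}),\phi(a_{2},\dots,a_{k+1})\big) \pm \mu_{2}^{B}\big(\phi(a_{1},\dots,a_{k}),f(a_{k+1})\big), \]
routing one argument through $f$ into $\mu_{2}^{B}$. When $\delta$ inserts $\mu_{0}^{A}$ into the $f$-slot it yields $f(\mu_{0}^{A})$, and it is exactly the assumption $f(\mu_{0}^{A}) = \mu_{0}^{B}$ that turns this into the required exterior term; the interior insertions of $\delta$ into $h_{B}$ cancel against $h_{B}\delta$. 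With $h = h_{A} + h_{B}$ this establishes $d^{2} = \delta h + h\delta$. I expect the only real obstacle to be the sign bookkeeping: checking that, with the Koszul signs carried by $\delta$, $d$, $\mu_{2}^{A}$ and $\mu_{2}^{B}$, the non-curvature summands of $\delta h + h\delta$ cancel in pairs while the curvature summands reassemble, with correct signs, into the formula for $d^{2}$ above. Conceptually, dropping $f(\mu_{0}^{A}) = \mu_{0}^{B}$ leaves the exterior $\mu_{0}^{B}$-term with no $\delta$-primitive, which is precisely the genuine obstruction to $d^{2} = 0$.
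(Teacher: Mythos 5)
Your proposal is correct and is essentially the paper's own argument: the homotopy operator $h = h_A + h_B$ you construct is exactly the element $\psi = (-1)^{|\phi_k|'}\sum_i \phi_k(\id^i\otimes\mu_2^A\otimes\id^{k-1-i}) + \mu_2^B(\phi_k\otimes f) + \mu_2^B(f\otimes\phi_k)$ used in the paper, and the paper likewise verifies $d^2(\phi_k)=\delta\psi$ via the curved relation $(\mu_1)^2 = -\mu_2(\mu_0\otimes\id)-\mu_2(\id\otimes\mu_0)$. The only cosmetic difference is that you phrase it as a chain homotopy $d^2=\delta h + h\delta$ on all of $C(A,B)$, while the paper only records the restriction to $\delta$-closed representatives, where $h\delta$ drops out.
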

\begin{proof}
Choose any such $f$. Given $[\phi_k]\in D^k(A,B)$, define $\psi\in \Hom(A[1]^{k+1}, B[1])$ as
\[ \psi := (-1)^{|\phi_k|'}\sum_{i=0}^{k-1} \phi_k(\id^i\otimes \mu_2^A \otimes \id^{k-1-i})+\mu_2^B(\phi_k\otimes f)+\mu_2^B(f\otimes \phi_k).\]
One can verify that we have $d^2(\phi_k)=\delta \psi$. Hence $d^2=0$ in $\delta$-cohomology.
\end{proof}


\begin{defi}\label{defi:obs}
Under the assumptions for $A$ and $B$ in the above lemma, we set the $k$-th obstruction space $H^k(A,B)$ to be the degree one cohomology of the complex $\big(D^k(A,B),d\big)$, i.e. 
\[ H^k(A,B) := H^1\big( D^k(A,B), d\big).\]
\end{defi}

\subsection{Obstruction classes.}\label{susbsec:obs} 
Let $f_j: A[1]^{\otimes j} \ra B[1] \;\; (j=1,\ldots,n)$ be $n$ multi-linear maps of cohomological degree zero, such that the following conditions hold:
\begin{itemize}
\item[(i)] The $A_\infty$ homomorphism axiom holds up to $(n-1)$ inputs, i.e. we have
\[ \sum_{j\geq 0, i_1,\cdots,i_j \geq 1 \atop i_1+\cdots+i_j=k}   \mu^B_j (f_{i_1}\otimes\cdots\otimes f_{i_j}) =\sum_{ r\geq 0,s\geq 0,t\geq 0\atop r+s+t=k} f_{r+t+1}(\id^{\otimes r} \otimes \mu^A_s\otimes \id^{\otimes t})\]
for all $0\leq k\leq n-1$.
\item[(ii)] In the case with $n$ inputs, we require that
\begin{equation}\label{eq:anhomo}
 \sum_{ j\geq 0, i_1,\cdots,i_j\geq 1 \atop i_1+\cdots+i_j=n}   \mu^B_j (f_{i_1}\otimes\cdots\otimes f_{i_j}) - \sum_{r\geq 0, t\geq 0, s\geq 1\atop r+s+t=n} f_{r+t+1}(\id^{\otimes r} \otimes \mu^A_s\otimes \id^{\otimes t})
\end{equation}
is $\delta$-exact, i.e. it lies in the image of $\delta: C^{n+1}(A,B) \ra C^n(A,B)$.
\end{itemize}
Given such a collection of maps $f_j: A[1]^{\otimes j} \ra B[1] \;\; (j=1,\ldots,n)$, we define its obstruction class as follows. Choose any $f_{n+1}'$ of cohomological degree zero such that $\delta f_{n+1}'$ equals the expression in (\ref{eq:anhomo}). Then we set 
\[ {\sf obs}_{n+1} :=   \sum_{ j\geq 2, i_1,\cdots,i_j\geq 1 \atop i_1+\cdots+i_j=n+1}   \mu^B_j (f_{i_1}\otimes\cdots\otimes f_{i_j}) - \sum_{r\geq 0, t\geq 0, s\geq 2\atop r+s+t=n+1} f_{r+t+1}(\id^{\otimes r} \otimes \mu^A_s\otimes \id^{\otimes t})+df_{n+1}'\]

\begin{lem}\label{lem:well-defined}
The above expression $ {\sf obs}_{n+1}$ is $\delta$-closed. Furthermore, $d  ({\sf obs}_{n+1})$ is $\delta$-exact. Thus it represents a well-defined class which we denote by $\mathfrak{o} \big( (f_j)_{j=1}^n\big)\in H^{n+1}(A,B)$. This class is independent of the choice of $f'_{n+1}$.
\end{lem}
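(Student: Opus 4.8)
The plan is to verify the three assertions of the lemma in turn, working directly from the explicit formula defining $\mathsf{obs}_{n+1}$ and exploiting the two crucial compatibility relations established earlier in the subsection, namely $\delta^2=0$ and $d\delta+\delta d=0$. First I would show that $\mathsf{obs}_{n+1}$ is $\delta$-closed. The natural strategy is to apply $\delta$ to the full $A_\infty$-homomorphism expression at tensor-degree $n+1$ (the complete sum over all $j\geq 0$ and all $s\geq 0$, including the ``linear'' pieces with $j=1$ or $s\in\{0,1\}$ that were stripped out to form $\mathsf{obs}_{n+1}$), and to recognize that the omitted linear terms are exactly what combine with $df'_{n+1}$ via the relation $\delta d=-d\delta$. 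Concretely, the separated-out pieces involving $\mu_1^A$, $\mu_1^B$ and the identity maps reassemble, after applying $\delta$, into $d\,\delta f'_{n+1}$, which by hypothesis (ii) equals $d$ applied to the degree-$(n+1)$ expression. The $\delta$ of the remaining quadratic-and-higher terms is then controlled by the inductive assumption that hypothesis (i) holds for all $k\leq n-1$: differentiating inserts $\mu_0^A$ into one slot, and the $A_\infty$ relations at those lower tensor degrees force the resulting sum to telescope to zero.

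Second, I would show that $d(\mathsf{obs}_{n+1})$ is $\delta$-exact, which is what makes the class well-defined in $H^{n+1}(A,B)=H^1(D^{n+1}(A,B),d)$ rather than merely in the $\delta$-cohomology $D^{n+1}$. Here the key input is the preceding lemma (the one guaranteeing $d^2=0$ on $D^k$ under the hypothesis $f(\mu_0^A)=\mu_0^B$): since $\mathsf{obs}_{n+1}$ is, up to the correction $df'_{n+1}$, built from $d$-type and composition operations, applying $d$ once more should reduce $d(\mathsf{obs}_{n+1})$ to something of the form $d^2(\text{something})$ plus manifestly $\delta$-exact remainders. I would carry this out by writing $d(\mathsf{obs}_{n+1})$ explicitly and using $d^2=\delta\psi$ (the identity from that lemma's proof, with the explicit $\psi$ involving $\mu_2^A$, $\mu_2^B$ and $f$) to absorb the non-$\delta$-exact part, so that the class $[\mathsf{obs}_{n+1}]$ indeed lands in the $d$-cohomology $H^1(D^{n+1},d)$.

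Third, independence of the choice of $f'_{n+1}$ follows formally: any two admissible choices differ by a $\delta$-closed element $\eta\in\Hom(A[1]^{\otimes(n+1)},B[1])$, since both have the same $\delta$-image (the expression in (ii)). Replacing $f'_{n+1}$ by $f'_{n+1}+\eta$ changes $\mathsf{obs}_{n+1}$ by $d\eta$, and because $\eta$ is $\delta$-closed it represents a class in $D^{n+1}(A,B)$, so $d\eta$ is a $d$-coboundary there; hence the cohomology class $\mathfrak{o}((f_j)_{j=1}^n)\in H^1(D^{n+1},d)$ is unchanged.

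I expect the main obstacle to be the first step, the $\delta$-closedness, and specifically the bookkeeping of signs and of which terms are ``stripped'' versus ``retained.'' The delicate point is that $\mathsf{obs}_{n+1}$ is not simply $\delta$ applied to a single object; it is assembled from the higher ($j\geq 2$, $s\geq 2$) part of the homomorphism relation together with the linear correction $df'_{n+1}$, so verifying $\delta(\mathsf{obs}_{n+1})=0$ requires decomposing the degree-$(n+1)$ $A_\infty$ relation into its linear and nonlinear parts, applying $\delta$ to each, and checking that the cross-terms produced by inserting $\mu_0^A$ cancel against one another using the lower-degree relations from hypothesis (i). Keeping the Koszul signs $(-1)^{|\phi_k|'+\cdots}$ consistent through these insertions, and correctly matching the sign conventions in the definitions of $\delta$ and $d$, is where the real care is needed; the conceptual content is just the interplay $\delta^2=0$, $\delta d+d\delta=0$, and the inductive validity of the homomorphism axioms below degree $n$.
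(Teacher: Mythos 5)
Your third step is exactly the paper's argument, and your first step names the right ingredients ($\delta d+d\delta=0$, hypothesis (ii) identifying $\delta f'_{n+1}$ with the $n$-input defect, and hypothesis (i) at lower tensor degrees), though it stops well short of the actual sign-and-term bookkeeping. The genuine gap is in your second step, which is the heart of the lemma. The quadratic-and-higher part of ${\sf obs}_{n+1}$ --- the sums $\sum_{j\geq 2}\mu^B_j(f_{i_1}\otimes\cdots\otimes f_{i_j})$ and $\sum_{s\geq 2}f_{r+t+1}(\id^{\otimes r}\otimes\mu^A_s\otimes\id^{\otimes t})$ --- is not in the image of the operator $d$, which only involves $\mu_1^A$ and $\mu_1^B$; so $d({\sf obs}_{n+1})$ does not organize itself as ``$d^2$ of something plus manifestly $\delta$-exact remainders.'' Moreover, the identity $d^2(\phi)=\delta\psi$ from the preceding lemma is an identity on $\delta$-cohomology: the extra terms produced when $\delta$ hits the $\phi$-slots of $\mu_2^B(\phi\otimes f)$ and of $\phi(\id^i\otimes\mu_2^A\otimes\id^{k-1-i})$ only disappear when $\delta\phi=0$, and $f'_{n+1}$ is precisely not $\delta$-closed (its $\delta$ is the expression in (ii)). So the one tool you propose for ``absorbing the non-$\delta$-exact part'' does not apply to the single term ($d^2f'_{n+1}$) it could conceivably address, and says nothing about the rest.

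What is actually required is an explicit primitive, and the paper produces one via the coderivation calculus: writing $F'_{n+1}$ for the coalgebra extension of $(f_1,\ldots,f_n,f'_{n+1})$ and $\widetilde{\mu}$ for the coderivation extension of the operations, one has ${\sf obs}_{n+1}=\widetilde{\mu}F'_{n+1}-F'_{n+1}\widetilde{\mu}_{\geq 1}$ on $A[1]^{n+1}\to B[1]$, and a direct computation using $\widetilde{\mu}\widetilde{\mu}=0$ together with the partial intertwining $\widetilde{\mu}F'_{n+1}=F'_{n+1}\widetilde{\mu}$ in the available range of tensor degrees yields
\[
d({\sf obs}_{n+1})=\delta\big(\widetilde{\mu}_{\geq 2}F'_{n+1}-F'_{n+1}\widetilde{\mu}\big).
\]
Your sketch contains no candidate for this primitive and no mechanism that would produce one, so as written the central assertion --- that $[{\sf obs}_{n+1}]$ is a $d$-cocycle in $D^{n+1}(A,B)$ --- is not established.
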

\begin{proof}
Denote by $F_n$ the extension of $\  \sum_{j=1}^n f_j$ as a coalgebra map $T(A[1]) \to T(B[1])$. Similarly, denote by $\widetilde{\mu}_k$ the extension of $\mu_k$ as coderivations on the tensor coalgebra. Condition $(i)$ implies that we have
\[ \widetilde{\mu}^B F_n = F_n \widetilde{\mu}^A: A[1]^N \ra B[1]^M, \;\; \forall -1\leq N-M\leq n-2.\]
Observe also that $\delta(\phi)=(-1)^{|\phi|'}\phi\widetilde{\mu}_0$. Then we compute
\begin{align*}
&\delta({\sf obs}_{n+1}) = -\widetilde{\mu}_{\neq 1} F_n \widetilde{\mu}_0 + F_n\widetilde{\mu}_{\geq 2} \widetilde{\mu}_0 - \widetilde{\mu}_1 f_{n+1}' \widetilde{\mu}_0 + f'_{n+1}\widetilde{\mu}_1\widetilde{\mu}_0=\\
& ( -\widetilde{\mu}_{\neq 1} \widetilde{\mu} F_n +\widetilde{\mu}_{\neq 1} F_n \widetilde{\mu}_{\geq 1} ) + F_n\widetilde{\mu}_{\geq 2}\widetilde{\mu}_0 - (\widetilde{\mu}_1\widetilde{\mu}F_n + \widetilde{\mu}_1F_n \widetilde{\mu}_{\geq 1}) + (-\widetilde{\mu}F_n\widetilde{\mu}_1 + F_n\widetilde{\mu}_{\geq 1} \widetilde{\mu}_1)
\end{align*}
The two terms $-\widetilde{\mu}_{\neq 1} \widetilde{\mu} F_n$ and $-\widetilde{\mu}_1\widetilde{\mu}F_n$ combine to give zero since $\widetilde{\mu}\widetilde{\mu}=0$. The following three terms give \[\widetilde{\mu}_{\neq 1} F_n \widetilde{\mu}_{\geq 1} + \widetilde{\mu}_{1} F_n \widetilde{\mu}_{\geq 1} - \widetilde{\mu}F_n\widetilde{\mu}_1=\widetilde{\mu}F_n\widetilde{\mu}_{\geq 2}\]
Since there are $n$ inputs, after applying $\widetilde{\mu}_{\geq 2}$, we are left with at most $n-1$ inputs to apply $\widetilde{\mu}F_n$. In this case, we may use the commutativity $\widetilde{\mu}F_n= F_n\widetilde{\mu}$, i.e. we have the above three terms sum up to
 \[\widetilde{\mu}_{\neq 1} F_n \widetilde{\mu}_{\geq 1} + \widetilde{\mu}_{1} F_n \widetilde{\mu}_{\geq 1} - \widetilde{\mu}F_n\widetilde{\mu}_1=F_n\widetilde{\mu}\widetilde{\mu}_{\geq 2}=-F_n\widetilde{\mu}\widetilde{\mu}_{0}-F_n\widetilde{\mu}\widetilde{\mu}_{1}\]
The last equality follows from $\widetilde{\mu}\widetilde{\mu}=0$. Putting these back into the calculation of $\delta({\sf obs}_{n+1})$, we obtain 
 \begin{align*}
\delta({\sf obs}_{n+1}) & = F_n\widetilde{\mu}_{\geq 2}\widetilde{\mu}_0  + F_n\widetilde{\mu}_{\geq 1} \widetilde{\mu}_1-F_n\widetilde{\mu}\widetilde{\mu}_{0}-F_n\widetilde{\mu}\widetilde{\mu}_{1}
=& -F_n\widetilde{\mu}_1\widetilde{\mu}_0-F_n\widetilde{\mu}_0\widetilde{\mu}_1 =0
\end{align*}
Here we have used the $A_\infty$ relation that $\widetilde{\mu}_0\widetilde{\mu}_0=0$ and $\widetilde{\mu}_0\widetilde{\mu}_1+\widetilde{\mu}_1\widetilde{\mu}_0=0$.
Next, we prove that $d({\sf obs}_{n+1})$ is $\delta$-exact. We use the notation $F'_{n+1}: T(A[1]) \ra T(B[1])$ to denote the extension of $f_1,\cdots,f_n,f_{n+1}'$ to the tensor coalgebra. By definition of $f'_{n+1}$ we have that 
\[ \widetilde{\mu} F'_{n+1} = F'_{n+1}\widetilde{\mu}: A[1]^N \ra B[1]^M, \;\; \forall -1\leq N-M \leq n-1.\]
Using the notation $F_{n+1}'$, we may write the obstruction as
\[ {\sf obs}_{n+1}= \widetilde{\mu} F'_{n+1} - F'_{n+1}\widetilde{\mu}_{\geq 1}: A[1]^{n+1} \ra B[1].\]
Applying the operator $d$ to it yields
\[d( {\sf obs}_{n+1}) = \widetilde{\mu}_1\widetilde{\mu} F'_{n+1} - \widetilde{\mu}_1F'_{n+1}\widetilde{\mu}_{\geq 1}+\widetilde{\mu} F'_{n+1}\widetilde{\mu}_1-F'_{n+1}\widetilde{\mu}_{\geq 1}\widetilde{\mu}_1 .\]
The first and the third terms give
\begin{align*}
\widetilde{\mu}_1\widetilde{\mu} F'_{n+1} +\widetilde{\mu} F'_{n+1}\widetilde{\mu}_1& = -\widetilde{\mu}_{\geq 2} \widetilde{\mu} F'_{n+1} +\widetilde{\mu} F'_{n+1}\widetilde{\mu}_1\\
&=  -\widetilde{\mu}_{\geq 2} F'_{n+1} \widetilde{\mu} +\widetilde{\mu} F'_{n+1}\widetilde{\mu}_1\\
&= -\widetilde{\mu}_{\geq 2} F'_{n+1}\widetilde{\mu}_0 -\widetilde{\mu}_{\geq 2} F_n \widetilde{\mu}_{\geq 1}  +\widetilde{\mu} F_{n}\widetilde{\mu}_1+ \widetilde{\mu}_1 f'_{n+1}\widetilde{\mu}_1\\
&=  -\widetilde{\mu}_{\geq 2} F'_{n+1}\widetilde{\mu}_0 -\widetilde{\mu}_{\geq 2} F_n \widetilde{\mu}_{\geq 2}   +\widetilde{\mu}_1 f'_{n+1}\widetilde{\mu}_1\\
\end{align*}
Similarly, we have
\begin{align*}
- \widetilde{\mu}_1F'_{n+1}\widetilde{\mu}_{\geq 1}-F'_{n+1}\widetilde{\mu}_{\geq 1}\widetilde{\mu}_1 &= - \widetilde{\mu}_1F'_{n+1}\widetilde{\mu}_{\geq 1}-F'_{n+1}\widetilde{\mu}\widetilde{\mu}_1\\
&= - \widetilde{\mu}_1F'_{n+1}\widetilde{\mu}_{\geq 1}+F'_{n+1}\widetilde{\mu}\widetilde{\mu}_0+F'_{n+1}\widetilde{\mu}\widetilde{\mu}_{\geq 2}\\
&= - \widetilde{\mu}_1F'_{n+1}\widetilde{\mu}_{\geq 1}+F'_{n+1}\widetilde{\mu}\widetilde{\mu}_0+\widetilde{\mu}F_{n}\widetilde{\mu}_{\geq 2}\\
&= F'_{n+1}\widetilde{\mu}\widetilde{\mu}_0+\widetilde{\mu}F_{n}\widetilde{\mu}_{\geq 2}- \widetilde{\mu}_1F_{n}\widetilde{\mu}_{\geq 2}- \widetilde{\mu}_1f'_{n+1}\widetilde{\mu}_{1}\\
&= F'_{n+1}\widetilde{\mu}\widetilde{\mu}_0+\widetilde{\mu}_{\geq 2}F_{n}\widetilde{\mu}_{\geq 2}- \widetilde{\mu}_1f'_{n+1}\widetilde{\mu}_{1}
\end{align*}
Adding the two equations together yields the desired formula
\[ d( {\sf obs}_{n+1}) =-\widetilde{\mu}_{\geq 2} F'_{n+1}\widetilde{\mu}_0+F'_{n+1}\widetilde{\mu}\widetilde{\mu}_0 = \delta\big( \widetilde{\mu}_{\geq 2} F'_{n+1}-F'_{n+1}\widetilde{\mu}\big)\]
Finally, to see that the class $\mathfrak{o}\big( (f_j)_{j=1}^n\big)=[{\sf obs}_{n+1}]$ is independent of $f'_{n+1}$, let $f''_{n+1}$ be another such map. Then we have the two obstructions differ by $d(f'_{n+1}-f''_{n+1})$ with $\delta(f'_{n+1}-f''_{n+1})=0$, this proves the two obstruction classes are equal.
\end{proof}

\subsection{$A_{(n)}$ homomorphisms.} \label{subsec:Ahomotopy}
In the following, we shall refer to a collection of maps $(f_j: A[1]^{\otimes j} \ra B[1])_{j=1}^n$ satisfying conditions $(i)$, $(ii)$ in Subsection~\ref{susbsec:obs} as an $A_{(n)}$ homomorphism from $A$ to $B$.  Obviously, an $A_{(n)}$ homomorphism is also an $A_{(k)}$ homomorphism, for any $k\leq n$. Just as in the case of the usual $A_\infty$ homomorphisms, one can compose $A_{(n)}$ homomorphisms, using the formula
\begin{equation}\label{eq:composition}
(g\circ f)_j:= \sum_{l,i_1,\ldots,i_l\geq 1\atop i_1+\cdots+i_l=j} g_l(f_{i_1}\otimes\cdots\otimes f_{i_l}),
\end{equation}
for $j=1,\ldots,n$. If we denote by $f_{n+1}'$ (and $g_{n+1}'$) a map such that $\delta f_{n+1}'$ equals the expression in (\ref{eq:anhomo}) then we can define
\[(g\circ f)_{n+1}':= g_{n+1}'(f_{1}\otimes\cdots\otimes f_{1} )+  g_1(f_{n+1}')+\sum_{l,i_1,\ldots,i_l\geq 1\atop i_1+\cdots+i_l=n+1} g_l(f_{i_1}\otimes\cdots\otimes f_{i_l}),\]
Please note that this composition is strictly associative.

The simple but crucial observation is that a $A_{(n)}$ homomorphism lifts to an $A_{(n+1)}$ homomorphism if and only if its obstruction class $\mathfrak{o}\big(  (f_j)_{j=1}^n\big)$ vanishes.

Next we recall the notion of homotopy between $A_{(n)}$ homomorphisms. Let $\Omega_{[0,1]}^*$ be piece-wise polynomial differential forms on the interval $[0,1]$ (as in \cite[Definition 4.2.9]{FOOO}). This is an unital dg-algebra, therefore given an $A_\infty$-algebra $B$ we can easily define the tensor product $B \otimes \Omega_{[0,1]}^*$ (see \cite{Amo} for details). Moreover there are naive (also called linear) maps of $A_\infty$-algebras ${\sf ev}_0, {\sf ev}_1: B\otimes \Omega_{[0,1]}^* \to B$, given by ``evaluating at $t=0,1$". 
Two $A_{(n)}$ homomorphisms $f=(f_1,\ldots,f_n), g=(g_1,\ldots,g_n): A \to B$ are called homotopic, denoted by $f\cong g$, if there exists an $A_{(n)}$ homomorphism 
\[ F=(F_1,\ldots,F_n) : A \to B\otimes \Omega_{[0,1]}^*\]
such that ${\sf ev}_0\circ F = f$ and ${\sf ev}_1\circ F= g$. The following properties are standard:
\begin{itemize}
\item Homotopy relation ``$\cong$ " between $A_{(n)}$ morphisms is an equivalence relation.
\item If $h: A' \ra A$ is another $A_{(n)}$ morphism and $f\cong g$, then $f\circ h\cong g\circ h$.
\item If $h: B \ra B'$  is another $A_{(n)}$ morphism and $f\cong g$, then $h\circ f\cong h\circ g$.
\item Two homotopic $A_{(n)}$ homomorphisms $(f_j)_{j=1}^n$ and $(g_j)_{j=1}^n$ from $A$ to $B$  have the same obstruction class, i.e. we have
\[ \mathfrak{o}\big( (f_j)_{j=1}^n \big) = \mathfrak{o}\big( (g_j)_{j=1}^n\big).\]
Indeed, it is clear that we have 
\begin{align*}
\mathfrak{o}\big( (f_j)_{j=1}^n \big) & = ({\sf ev}_0)_* \mathfrak{o}\big( (F_j)_{j=1}^n \big)\\
\mathfrak{o}\big( (g_j)_{j=1}^n \big) & = ({\sf ev}_1)_* \mathfrak{o}\big( (F_j)_{j=1}^n \big)
\end{align*}
Then observe that every cohomology class in $H^{n+1}(A,B\otimes  \Omega_{[0,1]}^*)$ can be represented by an element of the form $\phi\otimes \bone$ with $\phi\in C^{n+1}(A,B)$ and $\bone$ the constant function in $\Omega_{[0,1]}^*$, which clearly shows that applying the two evaluation maps $\ev_0$ and $\ev_1$ both yield $[\phi]\in H^{n+1}(A,B)$.
\end{itemize}

It is useful to spell out the definition of homotopy in the $n=1$ case.

\begin{lem}\label{lem:a1homo}
	Two $A_{(1)}$-homomorphisms $f_1, g_1: A \ra B$ are $A_{(1)}$-homotopic if and only if there is a map $H:A\ra B$ of degree $-1$, such that $H(\mu^A_0)=0$ and $f_1 - g_1 - \mu^B_1 H - H \mu^A_1$ is $\delta$-exact.
\end{lem}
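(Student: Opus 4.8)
The plan is to unwind the definition of $A_{(1)}$-homotopy in the special case $n=1$ and identify it with the stated algebraic condition. First I would note that an $A_{(1)}$-homomorphism from $A$ to $B\otimes \Omega_{[0,1]}^*$ consists of a single degree-zero map $F_1: A[1]\ra B[1]\otimes \Omega_{[0,1]}^*$ satisfying conditions $(i)$ and $(ii)$ from Subsection~\ref{susbsec:obs}. Since $\Omega_{[0,1]}^*$ is concentrated in degrees $0$ and $1$, I would decompose $F_1 = \alpha(t) + \beta(t)\,\d t$, where $\alpha(t): A\ra B$ is a $t$-family of degree-zero maps and $\beta(t): A\ra B$ is a $t$-family of degree $-1$ maps (after the degree shift from the $\d t$). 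The evaluation conditions $\ev_0\circ F = f$ and $\ev_1\circ F = g$ translate into $\alpha(0)=f_1$ and $\alpha(1)=g_1$.

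Next I would extract the content of condition $(i)$ for $F_1$, i.e.\ the $A_{(1)}$-homomorphism axiom with at most $0$ inputs, which is precisely the compatibility with the curvature: $F_1(\mu_0^A)=\mu_0^{B\otimes\Omega}$. Writing out the $\d t$-component of this equation forces $\beta(t)(\mu_0^A)=0$ for all $t$, while the degree-zero component is automatic. Then I would turn to condition $(ii)$, which says that the failure of the genuine $A_\infty$ axiom on one input is $\delta$-exact. Separating the equation into its $\d t$-free and $\d t$-components and integrating in $t$ is the key computation: the $\d t$-component of the $A_\infty$-relation for $F_1$ yields a differential equation of the form $\tfrac{d}{dt}\alpha(t) = \mu_1^B\beta(t)+\beta(t)\mu_1^A + (\delta\text{-exact})$, which upon integrating from $0$ to $1$ gives
\[ g_1 - f_1 = \mu_1^B H + H\mu_1^A + (\delta\text{-exact}),\]
where $H:=\int_0^1 \beta(t)\,\d t$ is the desired degree $-1$ homotopy operator, and $H(\mu_0^A)=\int_0^1 \beta(t)(\mu_0^A)\,\d t = 0$.

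For the converse, given such an $H$ satisfying $H(\mu_0^A)=0$ and with $f_1-g_1-\mu_1^B H - H\mu_1^A$ being $\delta$-exact, say equal to $\delta\gamma$, I would construct an explicit linear homotopy by setting $\alpha(t) = (1-t)f_1 + t\,g_1 + (\text{correction involving }\gamma)$ and $\beta(t)=H$ (or a suitable $t$-dependent modification), then verify directly that the resulting $F_1=\alpha(t)+\beta(t)\,\d t$ satisfies conditions $(i)$ and $(ii)$. The endpoints recover $f_1$ and $g_1$ by construction, and the curvature compatibility holds because $H(\mu_0^A)=0$.

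The main obstacle I anticipate is bookkeeping the $\delta$-exact correction terms correctly through the integration argument: both condition $(ii)$ and the conclusion are stated only up to $\delta$-exactness, so I must track how the $\delta$-exact piece in the one-input $A_\infty$-relation for $F$ integrates to a $\delta$-exact piece in the final identity, and conversely arrange the correction term $\gamma$ in the converse so that condition $(ii)$ holds on the nose rather than merely up to $\delta$-exactness. Verifying that the constant-in-$t$ choice $\beta(t)=H$ is compatible with condition $(ii)$ for all intermediate $t$ (not just at the endpoints) is the delicate point, and may require allowing $\beta$ or the $\delta$-exact correction to depend on $t$; I would use the fact that $\Omega_{[0,1]}^*$ is acyclic in positive degree relative to the endpoint evaluations to ensure such a path exists.
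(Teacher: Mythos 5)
Your proposal is correct and follows essentially the same route as the paper: decompose $F_1$ into a $t$-family $f_1^t$ plus an $h_1^t\,\d t$ part, read off $h_1^t(\mu_0^A)=0$ and the differential identity up to $\delta$-exactness from the one-input axiom, and set $H=\int_0^1 h_1^t\,\d t$. Your worry at the end is unfounded: since condition $(ii)$ is itself only required up to $\delta$-exactness, the straight-line interpolation $\alpha(t)=(1-t)f_1+t\,g_1$ with constant $\beta(t)=H$ already works for all intermediate $t$ (both the $\d t$-free and $\d t$-components are $\delta$-exact termwise), and no correction term in $\alpha$ is needed.
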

\begin{proof}
	An $A_{(1)}$-homotopy gives a map $F_1: A \ra B\otimes \Omega_{[0,1]}^*$. If we write $F_1(a)= f_1^t(a)+(-1)^{|a|'} h_1^t(a)dt$ then the $A_{(1)}$-homomorphism equation for $F$ is equivalent to: $f_1^1=f_1$, $f_1^0=g_1$, $h_1^t(\mu_0^A)=0$ and $-\frac{d f_1^t}{dt}+ \mu_1^B h_1^t+ h_1^t\mu_1^A$ is $\delta$-exact. We obtain the desired equality by taking $H=\int_0^1 h_1^t dt$.
\end{proof}

Two $A_\infty$ algebras $A$ and $B$ are called $A_{(n)}$ homotopic, if there exist $A_{(n)}$ homomorphisms $f: A \to B$ and $g: B \to A$ such that $f\circ g$ and $g\circ f$ are both $A_{(n)}$ homotopic to the identity homomorphism.

\subsection{Homotopy invariance of obstruction theory.} The obstruction spaces are natural with respect to $A_{(1)}$ homomorphisms. Namely, fix two curved $A_\infty$ algebras $A$ and $B$. Assume that there exists an $R$-linear map $f: A\to B$ such that $f(\mu_0^A)=\mu_0^B$, so that the obstruction space $H^k(A,B)$ is defined. Let $h: B \to B'$ be an $A_{(1)}$ homomorphism. By definition, we have $hf(\mu^A_0)=h(\mu^B_0)=\mu_0^{B'}$, which shows that the obstruction space $H^k(A,B')$ is also defined. Furthermore, the morphism $h$ induces a push-forward map
\[ h_*: H^k(A,B) \to H^k(A,B'),\]
defined by $[\phi] \mapsto [h\circ \phi] $ where $\phi\in {\Hom}(A[1]^{\otimes k}, B[1])$ is a representative. Similarly, let $g: A'[1] \ra A[1]$ be an $A_{(1)}$ homomorphism. We may define the pull-back map
\[ g^*: H^k(A,B) \ra H^k(A',B),\]
by $[\phi]\mapsto [\phi\circ (g\otimes\cdots\otimes g)]$ where we used $k$-copies of $g$ in the tensor product. 

\begin{lem}\label{lem:hom_inv}
	Assume that $h:B \to B'$ and $g: A'\to A$ are both $A_{(1)}$ homotopy equivalences. Then both $h_*$ and $g^*$ are isomorphisms.
\end{lem}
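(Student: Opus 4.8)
\emph{Proof plan.} The plan is to reduce the statement to two formal properties of the push\nobreakdash-forward and pull\nobreakdash-back, and then apply them to a homotopy inverse. Let $\bar h: B'\to B$ be an $A_{(1)}$ homotopy inverse of $h$, so $h\circ\bar h\cong\id_{B'}$ and $\bar h\circ h\cong\id_B$. Suppose we know that (a) $h\mapsto h_*$ is functorial, i.e.\ $(\psi\circ\chi)_*=\psi_*\circ\chi_*$ and $(\id)_*=\id$, and (b) homotopic $A_{(1)}$ homomorphisms induce the same map on $H^k(A,-)$. Then $h_*\circ\bar h_*=(h\circ\bar h)_*=(\id_{B'})_*=\id$ and $\bar h_*\circ h_*=(\bar h\circ h)_*=\id$, so $h_*$ is an isomorphism with inverse $\bar h_*$. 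The argument for $g^*$ is identical once covariant functoriality is replaced by its contravariant analogue $(\psi\circ\chi)^*=\chi^*\circ\psi^*$.

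Functoriality (a) is immediate. Specialising the composition formula \eqref{eq:composition} to the first component gives $(\psi\circ\chi)_1=\psi_1\circ\chi_1$; since $h_*$ is post-composition by $h_1$ while $g^*$ is pre-composition by $g_1^{\otimes k}$, both functoriality statements and the normalisations $(\id)_*=\id$, $(\id)^*=\id$ follow at once. Well-definedness of these maps has already been established preceding the lemma.

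The substance is the homotopy invariance (b), which I would prove using the explicit homotopy operator of Lemma~\ref{lem:a1homo}. Given $h\cong h':B\to B'$, that lemma produces $K:B\to B'$ of degree $-1$ with $K(\mu_0^B)=0$ and $h_1-h'_1-\mu_1^{B'}K-K\mu_1^B=\delta\rho$ for some $\rho$. Let $\phi$ represent a class in $H^k(A,B)$, so $\phi$ is a $\delta$-cocycle with $d\phi$ $\delta$-exact. A direct computation (keeping track of the degree $-1$ of $K$) yields
\[(h_1-h'_1)\circ\phi=d(K\circ\phi)+K\circ d\phi+(\delta\rho)\circ\phi.\]
Now $K\circ d\phi$ is $\delta$-exact because $K$ post-composes on the $B'$-output and hence commutes with $\delta$ (which inserts $\mu_0^A$ among the inputs), while $d\phi=\delta\eta$; and $(\delta\rho)\circ\phi$ is $\delta$-exact with primitive $\rho\circ(\phi\otimes f)\pm\rho\circ(f\otimes\phi)$, where $f$ is the base map with $f(\mu_0^A)=\mu_0^B$: inserting $\mu_0^A$ into the $f$-slot recovers exactly the terms $\rho(\mu_0^B,\phi(-))$ and $\rho(\phi(-),\mu_0^B)$, while insertions into the $\phi$-slots vanish since $\delta\phi=0$. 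Hence $[h_1\circ\phi]=[h'_1\circ\phi]$ in $H^k(A,B')$, proving $h_*=h'_*$. (Alternatively, for the push-forward one can reuse the $\Omega_{[0,1]}^*$ machinery: factor the homotopy through $\ev_0,\ev_1$ and invoke the fact that every class in $H^k(A,B'\otimes\Omega_{[0,1]}^*)$ is represented by some $\phi\otimes\bone$.)

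For the pull-back I would argue dually. Writing $g\cong g'$ and taking the corresponding $K:A'\to A$ from Lemma~\ref{lem:a1homo}, the telescoping identity $g_1^{\otimes k}-g_1'^{\otimes k}=\sum_i g_1'^{\otimes i}\otimes(g_1-g_1')\otimes g_1^{\otimes(k-1-i)}$ together with the substitution $g_1-g_1'=\mu_1^A K+K\mu_1^{A'}+\delta\rho$ should show that $\phi\circ(g_1^{\otimes k}-g_1'^{\otimes k})$ is $(d+\delta)$-exact, with $d$-primitive $\sum_i\phi\circ(g_1'^{\otimes i}\otimes K\otimes g_1^{\otimes(k-1-i)})$. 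I expect this computation to be the main obstacle: since $g,g'$ are only $A_{(1)}$ (not strict) homomorphisms, commuting $\mu_1^{A'}$ past the factors $g_1,g_1'$ generates $\delta$-exact error terms via $\mu_1^A g_1-g_1\mu_1^{A'}=\delta(\cdots)$, and the bookkeeping required to collect all these corrections and recognise them as $\delta$-exact is the delicate part. Granting it, $g^*=g'^*$, and combining with the formal argument of the first paragraph completes the proof.
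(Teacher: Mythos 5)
Your proposal is correct and follows essentially the same route as the paper: the paper's proof is exactly the reduction to (a) functoriality of $(-)_*$, $(-)^*$ under composition and (b) the fact, deduced from Lemma~\ref{lem:a1homo}, that $A_{(1)}$-homotopic morphisms induce equal maps on obstruction spaces, after which applying these to a homotopy inverse gives the result. You supply more detail than the paper does for step (b) — your chain-homotopy computation for the push-forward is right, and the pull-back bookkeeping you flag as delicate is precisely the part the paper also leaves implicit in the phrase ``it follows from Lemma~\ref{lem:a1homo}''.
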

\begin{proof}
	It follows from Lemma \ref{lem:a1homo} that	if $h_0$ and $h_1$ are $A_{(1)}$-homotopic, then $(h_0)_*=(h_1)_*$ and $(h_0)^*=(h_1)^*$. This together with the identities $(h_0 h_1)_*=(h_0)_* (h_1)_*$ and $(g_1 g_0)^*=(g_0)^* (g_1)^*$ immediately give the result.
\end{proof}

\subsection{Whitehead theorem for curved $A_\infty$-algebras.}

\begin{prop}\label{prop:torsor}
Let $f=(f_1,\ldots,f_n): A \to B$ be an $A_{(n)}$ homomorphism. Assume that $\mathfrak{o}\big( (f_j)_{j=1}^n \big)=0$. Denote by $\sL(f)$ the set of liftings of $f$ to an $A_{(n+1)}$ homomorphism modulo the homotopy equivalence relation. Then $\sL(f)$ carries a natural transitive action by the abelian group $H^0\big(D^{n+1}(A,B),d\big)$.
\end{prop}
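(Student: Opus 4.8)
The plan is to mimic the classical torsor structure for obstruction-theoretic liftings, adapted to the curved setting using the two differentials $\delta$ and $d$. Since $\mathfrak{o}\big((f_j)_{j=1}^n\big) = 0$, at least one lift exists: we have $\operatorname{\sf obs}_{n+1} = d f'_{n+1} + (\text{lower } \mu\text{-terms})$ is $\delta$-exact and $d$-exact modulo $\delta$-exact, so we may correct $f'_{n+1}$ to a genuine $A_{(n+1)}$ homomorphism. Fix one such lift $f_{n+1}$ as a basepoint. The key computation is that any two lifts $f_{n+1}$ and $\tilde f_{n+1}$ differ by an element $\alpha := \tilde f_{n+1} - f_{n+1} \in \Hom(A[1]^{\otimes(n+1)}, B[1])$, and the $A_{(n+1)}$ axiom in tensor degree $n+1$ forces $\alpha$ to satisfy both $\delta\alpha = 0$ and $d\alpha = 0$ (the latter modulo $\delta$-exact terms, since only the linear pieces $\mu_1^A, \mu_1^B$ interact with the top component). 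Thus $\alpha$ represents a class in $H^0\big(D^{n+1}(A,B), d\big)$, and conversely adding any $d$-closed, $\delta$-closed degree-zero cochain to $f_{n+1}$ yields another valid lift. This gives a simply-transitive action at the level of cochains.

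First I would spell out the action: given a lift $f_{n+1}$ and a class $[\alpha] \in H^0\big(D^{n+1}(A,B), d\big)$ represented by a $\delta$-closed cochain $\alpha$ with $d\alpha$ being $\delta$-exact, I claim $f_{n+1} + \alpha$ is again an $A_{(n+1)}$ homomorphism. The verification reduces to checking that the homomorphism axiom in tensor degree $n+1$ still holds; because $\alpha$ appears only linearly (all products involving two new components would land in tensor degree $> n+1$), the change in the axiom is exactly $\widetilde{\mu}_1 \alpha - \alpha\,\widetilde{\mu}_1 \pm \alpha\,\widetilde{\mu}_{\geq 2}$-type terms, which the conditions $\delta\alpha=0$ and $d\alpha \equiv 0$ are designed to kill modulo the ambiguity already present in condition (ii). I would use the coalgebra/coderivation notation $F_{n+1}$, $\widetilde\mu$ from Lemma~\ref{lem:well-defined} to package this cleanly, exactly as in the obstruction computation there.

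Next I would check the action descends to homotopy classes and is well-defined independent of representatives: if $\alpha$ is $\delta$-exact, say $\alpha = \delta\beta$, then $f_{n+1}$ and $f_{n+1} + \alpha$ are $A_{(n+1)}$-homotopic (the homotopy is built from $\beta$ as in Lemma~\ref{lem:a1homo}), and if $\alpha = d\gamma + \delta(\cdots)$ then a similar homotopy applies; this shows the action factors through $H^0\big(D^{n+1}(A,B),d\big)$ and through $\sL(f)$. Transitivity is the converse direction already sketched: given two lifts, their difference is automatically a cocycle of the required type, so it lies in the image of some group element. I would also note that different choices of the basepoint $f'_{n+1}$ in condition (ii) do not affect the construction, since Lemma~\ref{lem:well-defined} guarantees the obstruction class is independent of that choice.

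The main obstacle I anticipate is the careful bookkeeping showing that $d\alpha$ being merely $\delta$-\emph{exact} (rather than zero on the nose) is exactly the right condition, and that this matches the homotopy ambiguity in lifts rather than over- or under-counting. Concretely, the subtlety is that $d$ is only a differential on $\delta$-cohomology (we do not have $d^2=0$ on the nose), so ``$d$-closed'' must mean ``$d\alpha$ is $\delta$-exact,'' and I must confirm that two lifts differing by such an $\alpha$ are genuinely the same point of $\sL(f)$ precisely when $[\alpha]=0$ in $H^0\big(D^{n+1}(A,B),d\big)$ — i.e. that the homotopy relation on lifts corresponds exactly to the combined $\delta$- and $d$-exactness, with no further identifications. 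Establishing this matching in both directions, using the $n=1$ homotopy description of Lemma~\ref{lem:a1homo} applied one tensor degree up, is where the real work lies; the torsor (simple transitivity) structure should then follow formally.
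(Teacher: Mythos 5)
Your proposal is correct and follows essentially the same route as the paper: the action is $[\alpha].[f_{n+1}]=[f_{n+1}+\alpha]$, the fact that the sum is again a lift comes from the difference of the degree-$(n+1)$ axioms being exactly $\delta\alpha$ and $d\alpha$, well-definedness is checked by exhibiting explicit $A_{(n+1)}$-homotopies (the paper writes $F_{n+1}=f_{n+1}+t\beta'+(1-t)\beta+\alpha\,dt$, which is your ``Lemma~\ref{lem:a1homo} one tensor degree up''), and transitivity follows because any two lifts differ by a cocycle of the required type. The only caveat is that the ``real work'' you locate at the end --- showing the homotopy relation introduces \emph{no further identifications}, i.e.\ freeness of the action --- is not needed: the proposition (and the paper's proof) asserts only a transitive action, so you may drop that direction entirely.
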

\begin{proof}
Let $f_{n+1}$ be a lift of $f$ to an $A_{(n+1)}$ homomorphism. Denote by $[f_{n+1}]$ its equivalence class in $\sL(f)$. Let $\beta: A[1]^{\otimes n+1} \ra B[1]$ be a map representing an element $[\beta]\in H^0\big(D^{n+1}(A,B),d\big)$. We define the group action by the formula
\begin{equation}~\label{eq:action}
 [\beta].[f_{n+1}] := [f_{n+1}+\beta]
 \end{equation}
To see that the action is independent of the choice of $\beta$, let $\beta'$ be another representative of the class $[\beta]$. Thus, the difference $\beta'-\beta=d\alpha$ for some $\delta$-closed morphism $\alpha: A[1]^{\otimes n+1} \ra B[1]$. We may define a homotopy between the two extensions $(f_1,\ldots,f_{n+1}+\beta)$ and $(f_1,\ldots,f_{n+1}+\beta')$ by putting
\begin{align*}
F &: A \to B\otimes \Omega^*_{[0,1]}\\
F_k &= f_k, \;\; 1\leq k\leq n\\
F_{n+1} &=  f_{n+1}+t\cdot \beta'+(1-t)\cdot \beta+ \alpha \cdot dt
\end{align*}
This shows that the action map~\eqref{eq:action} is independent of the choice of $\beta$.

Similarly, assume that $f'_{n+1}$ is another representative of the lift class $[f_{n+1}]$, i.e. there exists a homotopy $H: A\to B\otimes \Omega^*_{[0,1]}$ between $(f_1,\ldots,f_n,f_{n+1})$ and $(f_1,\ldots,f_n,f'_{n+1})$. We simply change $H_{n+1}$ to $H_{n+1}+\beta$, which gives a homotopy between the two lifts $(f_1,\ldots,f_n,f_{n+1}+\beta)$ and $f_1,\ldots,f_n,f'_{n+1}+\beta)$. This verifies that the action map is well-defined.

Transitivity of the action map is clear: since any two lifts differ by some $\beta$ that would represent a class in $H^0\big(D^{n+1}(A,B),d\big)$.
\end{proof}

\begin{lem}\label{obs_composition}
	Let $f:A \to B$ be an $A_{(n)}$ homomorphism. Assume that $h:B \to B'$ and $g: A'\to A$ are both $A_{(n+1)}$ homomorphisms. Then we have
	\begin{align*}
		& \mathfrak{o}\Big( \big( (hf)_j\big)_{j=1}^n \Big) = (h_1)_* \mathfrak{o}\big( (f_j)_{j=1}^n \big)\\
		& \mathfrak{o}\Big(\big(fg)_j\big)_{j=1}^n \Big) = (g_1)^*\mathfrak{o}\big( (f_j)_{j=1}^n \big)
	\end{align*}
	Moreover, the natural map
	$ -\circ g: \sL(f) \to \sL(f\circ g)$
	given by composition of $A_{(n+1)}$ homomorphisms is a homomorphism of $H^0\big(D^{n+1}(A,B),d\big)$-modules. 
	Here the $H^0\big(D^{n+1}(A,B),d\big)$-module structure of $\sL(f\circ g)$ is via the group homomorphism $$(g_1)^*: H^0\big(D^{n+1}(A,B),d\big) \ra H^0\big(D^{n+1}(A',B),d\big).$$
\end{lem}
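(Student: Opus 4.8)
The plan is to verify all three statements at the level of explicit representatives, using the coalgebra formula
\[ {\sf obs}_{n+1}(f) = \widetilde{\mu}^B F'_{n+1} - F'_{n+1}\widetilde{\mu}^A_{\geq 1}: A[1]^{n+1}\to B[1] \]
established inside the proof of Lemma~\ref{lem:well-defined}, together with the fact that a genuine $A_{(n+1)}$ homomorphism $h$ (resp. $g$) intertwines the codifferentials, $\widetilde{\mu}^{B'}H = H\widetilde{\mu}^B$ (resp. $\widetilde{\mu}^A G = G\widetilde{\mu}^{A'}$), on the arity range $-1\leq N-M\leq n-1$, exactly as condition $(i)$ forces for $F'_{n+1}$. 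Here $H$ and $G$ denote the coalgebra extensions of $h$ and $g$.

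First I would treat the push-forward identity. Let $(HF)'_{n+1}$ be the coalgebra map extending the composite components $(hf)_1,\ldots,(hf)_n$ together with the auxiliary term $(hf)'_{n+1}$ prescribed by the composition formula of Subsection~\ref{subsec:Ahomotopy}. I first check that, on the component landing in a single output factor of $B'[1]$, one has $(HF)'_{n+1} = H F'_{n+1}$. Substituting into the coalgebra formula and sliding $H$ to the left via $\widetilde{\mu}^{B'}H = H\widetilde{\mu}^B$ (legitimate because after applying $\widetilde{\mu}^A_{\geq 1}$ or restricting to $B'[1]$ we stay within the arity window where $h$ satisfies the homomorphism axiom) yields ${\sf obs}_{n+1}(hf) = H\big(\widetilde{\mu}^B F'_{n+1} - F'_{n+1}\widetilde{\mu}^A_{\geq 1}\big)$ up to a $\delta$-exact term. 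Projecting to a single output collapses $H$ to its linear part $h_1$, so ${\sf obs}_{n+1}(hf)$ and $h_1\circ{\sf obs}_{n+1}(f)$ represent the same class, giving $\mathfrak{o}((hf)_j) = (h_1)_*\mathfrak{o}((f_j))$. The pull-back identity runs symmetrically with $G$ on the input side: writing ${\sf obs}_{n+1}(fg) = \widetilde{\mu}^B(FG)'_{n+1} - (FG)'_{n+1}\widetilde{\mu}^{A'}_{\geq 1}$, using $(FG)'_{n+1} = F'_{n+1}G$ on the relevant component, and sliding $G$ to the right with $\widetilde{\mu}^A G = G\widetilde{\mu}^{A'}$. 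The only full-arity contribution of $G$ to a map $A'[1]^{n+1}\to A[1]^{n+1}$ is $g_1\otimes\cdots\otimes g_1$, while the lower-arity pieces of $G$ feed components of ${\sf obs}$ with fewer than $n+1$ inputs and reassemble, via the coalgebra relations, into $\delta$- and $d$-exact terms. What survives is ${\sf obs}_{n+1}(f)\circ(g_1\otimes\cdots\otimes g_1)$, i.e. the definition of $(g_1)^*$, so $\mathfrak{o}((fg)_j) = (g_1)^*\mathfrak{o}((f_j))$.

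Finally, for the module statement I unwind the torsor action of Proposition~\ref{prop:torsor}. Given $[\beta]\in H^0\big(D^{n+1}(A,B),d\big)$ and a lift $[f_{n+1}]\in\mathcal{L}(f)$, the action~\eqref{eq:action} sends it to $[f_{n+1}+\beta]$, and I must compare $(f_{n+1}+\beta)\circ g$ with the $(g_1)^*[\beta]$-translate of $f_{n+1}\circ g$. Since $\beta$ has arity $n+1$, in the composition formula~\eqref{eq:composition} for the $(n+1)$-st component it contributes only through the term with $l=n+1$ and all inner arities equal to one, namely $\beta\circ(g_1\otimes\cdots\otimes g_1) = (g_1)^*\beta$, while all remaining terms assemble $(f\circ g)_{n+1}$. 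Hence $\big((f_{n+1}+\beta)\circ g\big)_{n+1} = (fg)_{n+1} + (g_1)^*\beta$, which is exactly $(g_1)^*[\beta]\cdot\big([f_{n+1}]\circ g\big)$; compatibility with the homotopy relation, so that this descends to $\mathcal{L}$, follows from the third bullet of Subsection~\ref{subsec:Ahomotopy}.

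The main obstacle I expect is the bookkeeping in the first two parts: precisely identifying which arity components of $H$, $G$, and the composed auxiliary maps $(hf)'_{n+1}$, $(fg)'_{n+1}$ contribute to the single-output obstruction, and confirming that the error terms produced when sliding $H$ or $G$ across the codifferential — where the intertwining relation only holds on a restricted arity window — are genuinely $\delta$-exact and hence invisible in $H^{n+1}(A,B')$ (resp. $H^{n+1}(A',B)$). By contrast, the module-homomorphism statement is a formal unwinding of definitions once the composition formula is in hand.
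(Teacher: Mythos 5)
Your proposal is correct and follows essentially the same route as the paper, which simply cites the uncurved argument (Theorem 4.5.1 of \cite{FOOO}) for the two composition identities and observes that the module statement is immediate from the action map~\eqref{eq:action} and the composition formula~\eqref{eq:composition}. The extra detail you supply — checking that the error terms arising from sliding the coalgebra extensions of $h$ and $g$ across the codifferentials outside the exact arity window are $\delta$-exact (using $f_1(\mu_0^A)=\mu_0^B$ to convert $\widetilde{\mu}_0^B$-insertions into $\widetilde{\mu}_0^A$-insertions) — is exactly the curved-case bookkeeping the paper leaves implicit, and your treatment of it is sound.
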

\begin{proof}
	The fist statements can be proved as in the uncurved case, see Theorem 4.5.1 in \cite{FOOO}. The second statement follows from the action map~\eqref{eq:action} and the formula for composition in (\ref{eq:composition}).
\end{proof}

\begin{thm}\label{thm:A1}
An $A_\infty$ homomorphism $f=(f_1,f_2,\ldots): A \to B$ between curved $A_\infty$ algebras is a homotopy equivalence if and only if the map $f_1$ is an $A_{(1)}$ homotopy equivalence.
\end{thm}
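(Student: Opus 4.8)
The plan is to treat the two implications separately, the forward one being immediate and the converse being the genuine Whitehead statement. If $f$ is a homotopy equivalence with $A_\infty$ homotopy inverse $g$, so that $f\circ g\cong\id_B$ and $g\circ f\cong\id_A$, then restricting the witnessing homotopies $A\to B\otimes\Omega^*_{[0,1]}$ to their first components gives $A_{(1)}$ homotopies which, via the composition formula $(f\circ g)_1=f_1g_1$, exhibit $f_1g_1\cong\id$ and $g_1f_1\cong\id$ as $A_{(1)}$-homomorphisms; hence $f_1$ is an $A_{(1)}$ homotopy equivalence. All the work is in the converse.

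For the converse I would isolate a single Key Lemma: every $A_\infty$ homomorphism $\phi:X\to Y$ with $\phi_1$ an $A_{(1)}$ homotopy equivalence admits a left homotopy inverse, i.e.\ an $A_\infty$ homomorphism $\psi:Y\to X$ with $\psi\circ\phi\cong\id_X$ whose first component $\psi_1$ may be taken to be an $A_{(1)}$ homotopy inverse of $\phi_1$ (so $\psi_1$ is again an $A_{(1)}$ homotopy equivalence). Granting this, I would apply it to $f$ to get $g'$ with $g'\circ f\cong\id_A$ and $g'_1$ an $A_{(1)}$ equivalence, and then apply it to $g'$ to get $f''$ with $f''\circ g'\cong\id_B$. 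The formal manipulation $f''\cong f''\circ\id_A\cong f''\circ(g'\circ f)=(f''\circ g')\circ f\cong f$, using associativity and the two-sided compatibility of $\cong$ with composition, gives $f''\cong f$, whence $f\circ g'\cong f''\circ g'\cong\id_B$; together with $g'\circ f\cong\id_A$ this exhibits $g'$ as a two-sided homotopy inverse of $f$. The advantage of this ``apply the one-sided construction twice'' device is that it uses only precomposition, which is exactly the functoriality recorded in Lemma \ref{obs_composition}.

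The Key Lemma I would prove by induction on $n$, building an $A_{(n)}$ homomorphism $\psi^{(n)}:Y\to X$ with $\psi^{(n)}\circ\phi\cong\id_X$ at level $n$; the base case $n=1$ is the choice of an $A_{(1)}$ homotopy inverse of $\phi_1$. In the inductive step there are two tasks. First, the obstruction $\mathfrak{o}\big((\psi_j)_{j=1}^n\big)\in H^{n+1}(Y,X)$ must vanish: by Lemma \ref{obs_composition} its image under $(\phi_1)^*$ is $\mathfrak{o}\big(((\psi\phi)_j)_{j=1}^n\big)=\mathfrak{o}(\id_X)=0$, by homotopy invariance of the obstruction class and the fact that $\id_X$ extends to all orders, and since $(\phi_1)^*$ is injective by Lemma \ref{lem:hom_inv} the obstruction itself is zero, producing a lift. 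Second, among the lifts — a torsor over $H^0\big(D^{n+1}(Y,X),d\big)$ by Proposition \ref{prop:torsor} — I must choose one whose composite with $\phi$ is still homotopic to $\id_X$ at level $n+1$. Here I would use that $-\circ\phi$ is a morphism of torsors over $(\phi_1)^*$ on $H^0\big(D^{n+1},d\big)$; because the argument proving Lemma \ref{lem:hom_inv} applies verbatim to the degree-zero cohomology of the complex $D^\bullet$, this group map is an isomorphism, so the precomposition map is a bijection of torsors and supplies exactly the freedom needed to land in the homotopy class of $\id_X$.

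The step I expect to be the main obstacle is the bookkeeping concealed in this second task. Because $\psi^{(n)}\circ\phi$ is only homotopic to $\id_X$ at level $n$ rather than equal to it, correctly matching endpoints at level $n+1$ forces one to carry the witnessing homotopy $X\to X\otimes\Omega^*_{[0,1]}$ as part of the inductive data and to lift it as well. The obstruction to lifting this homotopy vanishes by the same injectivity mechanism, now applied to the evaluation maps $\ev_0,\ev_1$, which are $A_{(1)}$ equivalences since $\Omega^*_{[0,1]}$ is contractible, and the torsor action again provides the freedom to preserve the prescribed endpoint $\id_X$. This is the step the introduction describes as ``analogous to \cite{FOOO}'', and it is where essentially all the effort lies; the two structural facts that drive it are that the relevant maps $(\phi_1)^*,(\ev_i)_*$ are injective, which kills obstructions, and that they are surjective on $H^0$, which through the torsor action allows one to correct the $(n+1)$-st component while keeping the homotopy to the identity.
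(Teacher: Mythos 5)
Your proposal is correct and follows essentially the same route as the paper: the paper's proof is exactly your Key Lemma argument — induct on $n$, kill the obstruction $\mathfrak{o}(g)$ via $f^*\mathfrak{o}(g)=\mathfrak{o}(g\circ f)=\mathfrak{o}(\id)=0$ and the isomorphism $f^*$, carry and lift the witnessing homotopy $H$ through the torsor diagram $\sL(H)\to\sL(\id_A)$, $\sL(H)\to\sL(g\circ f)\leftarrow\sL(g)$ using surjectivity of $(\ev_0)_*$ and $(f_1)^*$ on $H^0\big(D^{n+1},d\big)$ — followed by the same ``apply the one-sided construction to the inverse'' trick to upgrade to a two-sided equivalence. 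The only differences are cosmetic (you take the limit before the second application and rearrange the final formal manipulation to conclude $f''\cong f$ rather than computing $f\circ\widetilde{g}$ directly).
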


\begin{proof}
The only if part is trivial, so we prove the if part. Let $ g_1: B \to A$ be an $A_{(1)}$ homotopy inverse of $f_1: A \to B$. We argue by induction on $n$ that if we are given an $A_{(n)}$ homomorphism
\[ g:=(g_1,\ldots,g_n): B \to A\]
such that $g\circ f\cong \id$ as $A_{(n)}$ homomorphisms, then there exists $g_{n+1}: B[1]^{\otimes n+1} \ra A[1]$ that extends $g$ to an $A_{(n+1)}$ homomorphism $\widetilde{g}=(g_1,\ldots,g_n,g_{n+1})$ such that $\widetilde{g}\circ f\cong \id$ as $A_{(n+1)}$ homomorphisms.

We first argue that $\mathfrak{o}(g)=0$. Using the homotopy invariance of obstruction class, we have
\[ f^*\mathfrak{o}(g)=\mathfrak{o}(g\circ f) = \mathfrak{o}(\id)=0\]
But $f$ is an $A_{(1)}$ homotopy equivalence, thus $f^*$ is an isomorphism, which shows that $\mathfrak{o}(g)=0$. Similarly, one can argue that if $H: A\ra A\otimes \Omega_{[0,1]}^*$ is a $A_{(n)}$ homotopy between $\id$ and $g\circ f$, then we also have $\mathfrak{o}(H)=0$. 

Now consider the following diagram of maps, provided by Proposition~\ref{prop:torsor} and Lemma~\ref{obs_composition},
\[\begin{CD}
\sL(H)   @> (\ev_0)_*>>  \sL(\id_A) \\
@V(\ev_1)_*VV                 @. \\
\sL(g\circ f) @< -\circ f << \sL(g)
\end{CD}\]
Observe that the upper-right corner admits a canonical lift by $\id_A$. We claim that there exists a lift $\widetilde{H}$ of $H$ such that
\[ (\ev_0)_* (\widetilde{H})=\id_A\]
Indeed, let $\widetilde{H}'$ be any lift of $H$. By the transitivity of the action map, there exists an element $\beta\in H^0\big(D^{n+1}(A,A),d\big)$ such that
\[ \beta. \big(  (\ev_0)_* (\widetilde{H}') \big) = \id_A\]
Since $\ev_0$ is a homotopy equivalence, there exists $\gamma\in H^0\big(D^{n+1}(A,A\otimes\Omega_{[0,1]}^*),d\big)$ such that $(\ev_0)_*\gamma=\beta$.  Using Lemma~\ref{obs_composition} we obtain
\[ (\ev_0)_*\big( \gamma.\widetilde{H}' \big)=\beta. \big(  (\ev_0)_* (\widetilde{H}') \big) = \id_A\]
We set $\widetilde{H}:= \gamma.\widetilde{H}'$. By the same argument, one can show that there exists a lift $\widetilde{g}$ of $g$ such that 
\[ (\ev_1)_* (\widetilde{H})= \widetilde{g}\circ f\]
In conclusion, we obtained an $A_{(n+1)}$ homomorphism $\widetilde{g}: B\ra A$ such that $\widetilde{g} \circ f \cong \id_A$. 

Finally, we need to prove that $f \circ \widetilde{g} \cong \id_B$. Since $\widetilde{g}$ is also a weak equivalence, the conclusion above implies that there exists an $A_{(n+1)}$ homomorphism $f': A \ra B$ extending $(f_1,\ldots,f_n)$ such that $f'\circ \widetilde{g}\cong \id_B$. Thus we have
\[ f\circ \widetilde{g} \cong f'\circ \widetilde{g} \circ f\circ \widetilde{g} \cong f'\circ \widetilde{g} \cong \id_B\]
which finishes the proof.
\end{proof}

\begin{rmk}\label{rmk:whitehead}
Observe that in the uncurved case, according to Lemma~\ref{lem:a1homo} our notion of $A_{(1)}$ homotopy between morphisms of chain complexes agrees with the usual one. Furthermore, if we are over a field, quasi-isomorphic chain complexes are in fact homotopy equivalent. Thus, the above theorem easily implies the usual Whitehead theorem of uncurved $A_\infty$ algebras over a field which states that a quasi-isomorphism between uncurved $A_\infty$ algebras over a field is in fact a homotopy equivalence.
\end{rmk}

\subsection{Curved $L_\infty$ algebras} The previous discussion and results have direct analogues in the $L_\infty$ setting. Let $A$ and $B$ be two curved $L_\infty$ algebras. In this case, we set the $\delta$-complex $C(A,B)$ as
\begin{align*}
& C(A,B)  := \cdots \ra \Hom(\sym^k A[1], B[1]) \stackrel{\delta}{\ra} \Hom(\sym^{k-1}A[1], B[1]) \ra \cdots \ra B[1] \ra 0 \\
& \delta (\phi_k) (a_1\cdots a_{k-1}) := (-1)^{|\phi_k|'}\cdot \phi_k(\mu_0^A\cdot a_1\cdots a_{k-1})
\end{align*}
One verifies that $\delta^2=0$. As before, we denote its cohomology by $D^k(A,B)$ where $k$ is the tensor degree. Using the operators $\mu_1^A$ and $\mu_1^B$, we define another operator $d$ by
\begin{align*}
& d: \Hom (\sym^k A[1], B[1]) \ra \Hom(\sym^{k}A[1], B[1]) \\
& d(\phi_k)(a_1\cdots a_k):=\mu_1^B\phi_k(a_1\cdots a_k)- \sum_{j=1}^k (-1)^{\star}\phi_k(a_1\cdots \mu_1^A(a_j)\cdots a_k)
\end{align*}
with $\star=|\phi_k|'+|a_1|'+\cdots+|a_{j-1}|'$.
Here $\sym$ stands for the graded symmetric algebra.

 If there exists a $R$-linear map $f: A \ra B$ such that $f(\mu_0^A)=\mu_0^B$, we set the $k$-th obstruction space $H^k(A,B)$ to be $H^k(A,B) := H^1\big( D^k(A,B), d\big)$. 

Let $f_j: \sym^j A[1] \ra B[1] \;\; (j=1,\ldots,n)$ be $n$ multi-linear maps of cohomological degree zero. We call the sequence $(f_1,\ldots,f_n)$ an $L_{(n)}$ morphism if the following conditions hold:
\begin{itemize}
\item[(i)] The $L_\infty$ homomorphism axiom holds up to $(n-1)$ inputs, i.e. for all $0\leq m\leq n-1$ we have
\begin{align*}
 &\sum_k \frac{1}{k!}\sum_{\sigma}\epsilon_\sigma\cdot \mu^B_k\big(f_{i_1}(a_{\sigma(1)}\cdots) \cdots f_{i_k}(\cdots a_{\sigma(m)})\big)\\
 =&\sum_{r\geq 0}\sum_\tau \epsilon_\tau\cdot f_{m-r+1}\big(\mu^A_r(a_{\tau(1)}\cdots a_{\tau(r)})\cdots a_{\tau(m)}\big)\end{align*}
where $\sigma$ is a $(i_1,\cdots,i_k)$ type shuffle, and $\tau$ is a $(r,n-r)$ type shuffle, and $\epsilon_\sigma$ and $\epsilon_\tau$ are Koszul signs associated with these permutations.
\item[(ii)] In the case with $n$ inputs, we require that
\begin{equation}\label{eq:lnhom}
 \sum_{k,\sigma} \frac{1}{k!} \mu^B_k (f_{i_1}\otimes\cdots\otimes f_{i_k})\Sh_\sigma - \sum_{r\geq 1,\tau} f_{n-r+1}(\mu^A_r\otimes \id^{\otimes n-r})\Sh_\tau
\end{equation}
is $\delta$-exact, i.e. it lies in the image of $\delta: C^{n+1}(A,B) \ra C^n(A,B)$. Here $\sigma$, $\tau$ are as above and $\Sh_\sigma$ is the map that permuts the inputs according to the shuffle $\sigma$.
\end{itemize}
Given an $L_{(n)}$ homomorphism $(f_1,\ldots,f_n): A\ra B$, we define its obstruction class as follows. Choose any $f_{n+1}'$ of cohomological degree zero such that $\delta f_{n+1}'$ equals the expression in (\ref{eq:lnhom}). Then we set 
\begin{align*}
 &{\sf obs}_{n+1}:= \\
 & \sum_{k\geq 2,\sigma\atop i_1+\cdots+i_k=n+1} \frac{1}{k!} \mu^B_k (f_{i_1}\otimes\cdots\otimes f_{i_k})\Sh_\sigma - \sum_{r\geq 2,\tau} f_{n-r+2}(\mu^A_r\otimes \id^{\otimes n+1-r})\Sh_\tau+df_{n+1}'
 \end{align*}
and define the obstruction class by
\[ \mathfrak{o}\big( (f_j)_{j=1}^n \big) = [ {\sf obs}_{n+1} ] \in H^{n+1}(A,B)\]
Again, one can verify (similar to Lemma~\ref{lem:well-defined}) that this class is well-defined and independent of the choice of $f'_{n+1}$. 

The formal properties of the obstruction theory still holds in the $L_\infty$ case, with which we deduce the following result for curved $L_\infty$ algebras. Again, in the uncurved case and over a field, this result immediately implies the classical Whitehead theorem of $L_\infty$ algebras that quasi-isomorphisms are also homotopy equivalences.

\begin{thm}
An $L_\infty$ homomorphism $f=(f_1,f_2,\ldots): A \to B$ between curved $L_\infty$ algebras is a homotopy equivalence if and only if $f_1$ is an $L_{(1)}$ homotopy equivalence.
\end{thm}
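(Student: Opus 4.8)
The plan is to run the proof of Theorem~\ref{thm:A1} essentially verbatim, the only genuine content being to re-establish its four supporting ingredients in the graded-symmetric setting: the transitive torsor action of $H^0\big(D^{n+1}(A,B),d\big)$ on the set $\sL(f)$ of $L_{(n+1)}$ lifts (Proposition~\ref{prop:torsor}), the compatibility of obstruction classes with composition together with the module-homomorphism property of $-\circ g$ (Lemma~\ref{obs_composition}), the homotopy invariance giving isomorphisms $h_*$ and $g^*$ (Lemma~\ref{lem:hom_inv}), and the explicit description of $L_{(1)}$ homotopies (the analogue of Lemma~\ref{lem:a1homo}). Each is obtained from its $A_\infty$ counterpart by replacing $A[1]^{\otimes k}$ with $\sym^k A[1]$, the tensor coalgebra with the cofree cocommutative coalgebra, and the bare $A_\infty$ relations with the symmetrized ones carrying the Koszul signs $\epsilon_\sigma,\epsilon_\tau$ and the shuffle normalizations $\tfrac1{k!}$ already present in the definitions of $\delta$, $d$, and ${\sf obs}_{n+1}$ above.

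The ``only if'' direction is immediate: the assignment $f\mapsto f_1$ is functorial for composition, since $(g\circ f)_1=g_1 f_1$, and sends a homotopy of $L_\infty$ morphisms to an $L_{(1)}$ homotopy of linear parts, so a homotopy inverse of $f$ linearizes to an $L_{(1)}$ homotopy inverse of $f_1$. For the ``if'' direction I would fix an $L_{(1)}$ homotopy inverse $g_1\colon B\to A$ of $f_1$ and induct on $n$: given an $L_{(n)}$ morphism $g=(g_1,\ldots,g_n)\colon B\to A$ with $g\circ f\cong\id$, I produce a component $g_{n+1}$ extending $g$ to an $L_{(n+1)}$ morphism $\widetilde g$ with $\widetilde g\circ f\cong\id$. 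The obstruction $\mathfrak{o}(g)$ vanishes because $f^*\mathfrak{o}(g)=\mathfrak{o}(g\circ f)=\mathfrak{o}(\id)=0$ while $f^*$ is an isomorphism by homotopy invariance, and the same argument applied to an $L_{(n)}$ homotopy $H$ realizing $g\circ f\cong\id$ gives $\mathfrak{o}(H)=0$, so that both $g$ and $H$ admit lifts.

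With these vanishings in hand I would reproduce the commutative square relating $\sL(H)$, $\sL(\id_A)$, $\sL(g)$ and $\sL(g\circ f)$ through the maps $(\ev_0)_*$, $(\ev_1)_*$ and $-\circ f$. Using transitivity of the torsor action and surjectivity of $(\ev_0)_*$---which follows because $\ev_0$ is a homotopy equivalence---I correct an arbitrary lift $\widetilde H'$ of $H$ by an element $\gamma$ with $(\ev_0)_*\gamma=\beta$ to obtain a lift $\widetilde H$ with $(\ev_0)_*\widetilde H=\id_A$; setting $\widetilde g:=(\ev_1)_*\widetilde H$ then yields the required extension with $\widetilde g\circ f\cong\id_A$. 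To upgrade this one-sided inverse to a genuine homotopy equivalence I use the standard two-out-of-three argument: since $\widetilde g$ is again a weak equivalence, there is an $L_{(n+1)}$ morphism $f'$ extending $(f_1,\ldots,f_n)$ with $f'\circ\widetilde g\cong\id_B$, whence $f\circ\widetilde g\cong f'\circ\widetilde g\circ f\circ\widetilde g\cong f'\circ\widetilde g\cong\id_B$.

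The main obstacle is not this inductive skeleton, which is purely formal, but the sign bookkeeping in the symmetric supporting lemmas. Re-deriving that $d^2=0$ on $D^k(A,B)$ and that ${\sf obs}_{n+1}$ is $\delta$-closed with $d({\sf obs}_{n+1})$ being $\delta$-exact amounts to repeating the coalgebra computation of Lemma~\ref{lem:well-defined} with cocommutative comultiplication; here one must track the shuffle coefficients and Koszul signs precisely enough that the cancellations coming from $\widetilde\mu\widetilde\mu=0$ and $\widetilde\mu_0\widetilde\mu_1+\widetilde\mu_1\widetilde\mu_0=0$ still collapse to the stated identities. Once this is verified, the torsor and composition lemmas follow as in the $A_\infty$ case and the theorem drops out of the induction.
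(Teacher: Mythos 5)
Your proposal is correct and follows essentially the same route as the paper, which itself proves the $L_\infty$ case by asserting that the obstruction-theoretic ingredients of Theorem~\ref{thm:A1} (torsor action, composition compatibility, homotopy invariance, and the $L_{(1)}$ homotopy description) carry over verbatim after replacing $A[1]^{\otimes k}$ by $\sym^k A[1]$ with the appropriate shuffle normalizations, and then running the identical induction. The only cosmetic imprecision is writing $\widetilde g:=(\ev_1)_*\widetilde H$ rather than invoking the bijection $-\circ f\colon \sL(g)\to\sL(g\circ f)$ to produce $\widetilde g$ with $\widetilde g\circ f=(\ev_1)_*\widetilde H$, but since you display that map in your square this is harmless.
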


\section{Homotopy transfer of curved algebras}~\label{sec:transfer}

In this section, we prove a curved version of the homological perturbation lemma. This works for both $A_\infty$ and $L_\infty$ algebras in the presence of a new version of homotopy retraction data in the curved case. This is used to construct minimal charts of $L_\infty$ spaces  in Subsection~\ref{subsec:minimalchart}.

\subsection{Curved homotopy retraction data} Let us consider the following situation: we are given a curved $A_\infty$-algebra $(A,m_k)$, a graded vector space $V$, $R$-linear degree zero maps $i: V \ra A$ and $p: A \ra V$ and a $R$-linear map $H: A \ra A$ of degree $-1$. Assume there is $C \in V$ of degree 2 satisfying $i(C)= m_0$ and moreover:
\begin{align}
	m_{1}H+H m_{1} &= ip -\id_{A} - H m_1^2 H,\label{homotopy}\\
	p m_1 H =0, \ & \ H m_1 i=0.\label{homotopy_2}
\end{align}

We have the following
\begin{thm}\label{teorema B1}
	In the situation described, there is a curved $A_\infty$-algebra structure on $V$ with $\mu_{0}=C$ and $\mu_1=p m_1 i$. Moreover there is an $A_\infty$-homomorphism $\varphi:(V,\mu_k)\ra(A,m_k)$ with $\varphi_{1}=i$.
\end{thm}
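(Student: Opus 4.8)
The plan is to mimic the classical homotopy transfer formula, summing over planar rooted trees, but to carefully track the contributions of the curvature term $m_0$ and of the anomalous term $-Hm_1^2 H$ in the deformed homotopy identity~\eqref{homotopy}. First I would set up the combinatorial framework: define the transferred operations $\mu_k: V^{\otimes k}\to V$ and the homomorphism components $\varphi_k : V^{\otimes k}\to A$ by summing, over all planar rooted trees with $k$ leaves, the usual expression in which each leaf is decorated by $i$, each internal edge by the homotopy $H$, each internal vertex by some $m_j$, and the root by $p$ (for $\mu_k$) or by $H$ (for $\varphi_k$). The new feature is that I must also allow univalent internal vertices decorated by $m_0 = i(C)$ and, crucially, the operator identity~\eqref{homotopy} forces me to admit ``bivalent'' insertions of $Hm_1^2H$ along edges. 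Concretely, I would package this by working with the normalized combination and writing $\mu_0 = C$, $\mu_1 = p m_1 i$ by hand, and defining higher $\mu_k$ and $\varphi_k$ recursively so that the recursion absorbs the extra terms in~\eqref{homotopy}.

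The cleanest route is the coalgebra/coderivation formalism already used in Lemma~\ref{lem:well-defined}. I would let $\widetilde{m}$ denote the codifferential on $T^c(A[1])$ encoding $(m_k)$, let $\widetilde{\mu}$ be the sought codifferential on $T^c(V[1])$, and let $\Phi: T^c(V[1])\to T^c(A[1])$ be the coalgebra map with components $\varphi_k$. The two assertions I must prove are the curved $A_\infty$ relation $\widetilde{\mu}\,\widetilde{\mu}=0$ and the intertwining $\widetilde{m}\,\Phi = \Phi\,\widetilde{\mu}$, with $\varphi_1 = i$ and $\mu_0 = C$. Rather than verify these termwise, I would define $\Phi$ and $\widetilde{\mu}$ by a single fixed-point recursion
\[
\Phi = I + H\,\widetilde{m}\,\Phi, \qquad P\,\widetilde{m}\,\Phi = \widetilde{\mu},
\]
where $I, H, P$ are the evident lifts of $i, H, p$ to the (co)tensor level. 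This is the standard perturbation-lemma recursion; the point is that because $i,p,H$ are only $R$-linear (not chain maps for a genuine differential) the recursion still makes sense, and the identity~\eqref{homotopy} is exactly what is needed to run it. The side conditions~\eqref{homotopy_2}, $pm_1H=0$ and $Hm_1 i=0$, together with the standard $H^2=0$ side condition (which I would either assume or arrange, as is customary), guarantee that the tree sums telescope.

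The main obstacle will be verifying $\widetilde{\mu}\,\widetilde{\mu}=0$, i.e.\ that the transferred structure genuinely satisfies the \emph{curved} $A_\infty$ relations including the terms involving $\mu_0=C$. In the uncurved case this follows formally from $\widetilde{m}^2=0$ and $m_1^2=0$ once the homotopy identity holds; here $m_1^2\neq 0$, and the failure is measured precisely by the correction $-Hm_1^2H$ in~\eqref{homotopy}. I expect the key computation to be showing that, when one applies $P\widetilde{m}\Phi$ twice and uses the recursion $\Phi = I + H\widetilde{m}\Phi$, every occurrence of $m_1^2$ produced by $\widetilde m^2=0$ is cancelled by a compensating $-Hm_1^2H$ term coming from~\eqref{homotopy}, so that the net result reorganizes into the curved $A_\infty$ relation for $(\mu_k)$ with curvature $C$. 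Establishing this cancellation cleanly — tracking Koszul signs and confirming that the boundary terms $pm_1H$ and $Hm_1 i$ vanish by~\eqref{homotopy_2} — is the heart of the proof; once it is in place, the intertwining identity $\widetilde m\,\Phi=\Phi\,\widetilde\mu$ and the normalization $\varphi_1=i$ follow directly from the defining recursion.
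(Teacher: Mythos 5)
Your definition of the transferred operations coincides with the paper's ($\mu_k=\sum_{T\in\Gamma_k}\mu_T$ over stable planar trees, with $i$ at the leaves, $H$ on internal edges, $m_j$ at vertices, and $p$, respectively $H$, at the root), but the verification you outline has two genuine problems. First, the coalgebra fixed-point recursion $\Phi = I + H\widetilde m\Phi$ taken with the \emph{full} curved codifferential $\widetilde m$ is not well posed here: the coderivation extending $m_0$ raises tensor length, so iterating produces words such as $Hm_2(m_0,Hm_2(m_0,\dots))$ contributing to a fixed $\varphi_k$ at every order, and the $\widetilde m_1$ part contributes a term $Hm_1\varphi_k$ on the right-hand side, turning the ``recursion'' into an implicit equation $(\id-Hm_1)\varphi_k=\cdots$ with no smallness available to invert it (the side conditions~\eqref{homotopy_2} kill $Hm_1i$ but not $Hm_1H$). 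The same difficulty afflicts your suggestion to admit univalent $m_0$-vertices and bivalent $Hm_1^2H$-insertions in the defining trees: that makes the tree sum infinite. The paper avoids all of this by defining $\mu_k$, $k\geq 2$, and $\varphi_k$ by the \emph{unmodified} stable-tree formula (only $m_{\geq 2}$ at vertices, so each sum is finite) and declaring $\mu_0=C$, $\mu_1=pm_1i$, $\varphi_1=i$ by hand; the curvature and the correction $\gamma=Hm_1^2H$ enter only in the verification, never in the definition. Also, no side condition $H^2=0$ is assumed or needed, since $H$ never composes with itself along a tree.

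Second, and decisively, the verification itself is not carried out: you say you ``expect'' the $m_1^2$ terms to cancel against $-Hm_1^2H$ and that establishing this is the heart of the proof. That is exactly where all the content of the theorem lies. The paper's mechanism is a double count of the auxiliary sum $\hat\mu_k$ obtained by inserting $m_1$ on each edge of each tree: summing edge-by-edge and using~\eqref{homotopy} produces $\mu^\Pi_{T,e}-\mu^{id}_{T,e}-\mu^{\gamma}_{T,e}$, while summing vertex-by-vertex and using the curved $A_\infty$ relation for $m$ produces edge-collapse terms together with terms $\mu_S(\dots,\mu_0,\dots)$ on trees with an extra leaf. One then needs the identity expressing $\gamma=Hm_1^2H$ through $m_2(m_0,\cdot)$ and $m_2(\cdot,m_0)$ (a consequence of the curved relation for $m_1^2$) to convert every $\mu^{\gamma}_{T,e}$ into curvature-insertion terms that match the latter, and similarly the identities $pm_1=\mu_1p-pm_1^2H$ and $m_1i=i\mu_1-Hm_1^2i$ to handle the root and leaf edges. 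Without these steps one cannot conclude that the transferred structure satisfies the curved $A_\infty$ relation with curvature precisely $C$, so the proposal as written leaves the theorem unproved.
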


A common application of this theorem is to construct ``minimal" algebras. In that case, we have the side conditions $Hi=pH=0$ and $p i=\id$. In the presence of these extra conditions (\ref{homotopy_2}) follows from (\ref{homotopy}).

Before we go into the proof we describe the maps $\mu_k$, for $k\geq 2$:
$$\mu_k=\sum_{T \in \Gamma_k}\mu_{T},$$
where $\Gamma_k$ is the set of rooted stable planar trees with $k$-leaves.
\begin{figure}[h]
	\begin{center}
		\setlength{\unitlength}{2pt}
		\begin{picture}(100,70)(0,0)
		\put(20,26){\circle*{2}}
		\put(50,50){\circle*{2}}
		\put(80,26){\circle*{2}}
		
		\linethickness{0.3mm}
		\put(50,50){\line(0,1){20}}
		\linethickness{0.3mm}
		\multiput(0,10)(0.15,0.12){333}{\line(1,0){0.15}}
		\linethickness{0.3mm}
		\put(20,10){\line(0,1){16}}
		\linethickness{0.3mm}
		\multiput(20,26)(0.16,-0.12){125}{\line(1,0){0.16}}
		\linethickness{0.3mm}
		\multiput(50,50)(0.12,-0.48){83}{\line(0,-1){0.48}}
		\linethickness{0.3mm}
		\multiput(50,50)(0.15,-0.12){333}{\line(1,0){0.15}}
		\linethickness{0.3mm}
		\put(80,10){\line(0,1){16}}
		\linethickness{0.3mm}
		\linethickness{0.3mm}
		\multiput(33,38)(0.12,-0.12){15}{\line(1,0){0.12}}
		\linethickness{0.3mm}
		\linethickness{0.3mm}
		\multiput(66,36)(0.12,0.12){15}{\line(1,0){0.12}}
		
		\put(45,70){\makebox(0,0)[cc]{$p$}}
		\put(20,5){\makebox(0,0)[cc]{$i$}}
		\put(40,5){\makebox(0,0)[cc]{$i$}}
		\put(60,5){\makebox(0,0)[cc]{$i$}}
		\put(80,5){\makebox(0,0)[cc]{$i$}}
		\put(100,5){\makebox(0,0)[cc]{$i$}}
		\put(70,40){\makebox(0,0)[cc]{H}}
		\put(30,40){\makebox(0,0)[cc]{H}}
		\put(85,29){\makebox(0,0)[cc]{$m_{2}$}}
		\put(56,50){\makebox(0,0)[cc]{$m_3$}}
		\put(0,5){\makebox(0,0)[cc]{$i$}}
		\put(17,29){\makebox(0,0)[cc]{$m_{3}$}}
		\end{picture}
	\end{center}
	\caption{Example of an element of $\Gamma_6$.}
\end{figure}
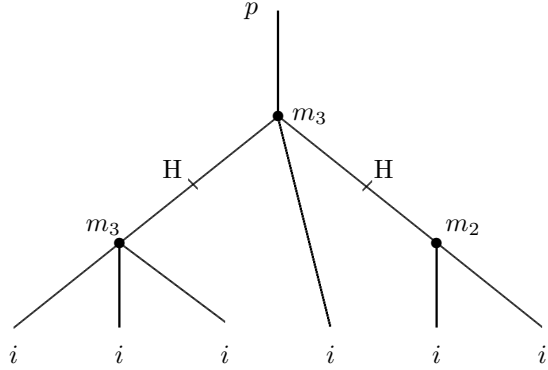

We use $T$ as a flow chart to define a map
$\mu_T: V^{\otimes k}\ra V.$
We assign to each $v\in V(T)$ the map $m_{val(v)}$; to the internal edges we assign $H$; and finally we assign $p$ to the root and $i$ to the leaves. For examples, the tree in Figure 1 gives the map
\begin{align}
	\mu_T(u_1,u_2,u_3,u_4,&u_5, u_6)=\nonumber\\
	=&p\circ m_{3}(H\circ m_{3}(i(u_1),i(u_2),i(u_3)),i(u_4),H\circ m_{2}(i(u_5),i(u_6))).\nonumber
	\end{align}
	
We would like to point out that these are exactly the same formulas as in the uncurved case. In particular, the $m_0$ term plays no role in the formulas for $\mu_k, \ k \geq 2$.

To prove these maps define an $A_\infty$-algebra
we will need the following auxiliary maps. Let $T\in\Gamma_k$, denote by $E(T)$ the set of edges of $T$ and by $e(T)$ the set of internal edges of $T$. For each $T\in \Gamma_k$, we define $\overline T$ as the  tree $T$ with one additional vertex in each internal edge of $T$. Given $e\in E(\overline T)$ we define $\hat\mu_{\overline{T}, e}$ in the same way as $\mu_{ T}$ with the extra assignment of $m_1$ to the edge $e$. Given $e\in e(T)$ we define $\mu^\Pi_{T,e}$, $\mu^{id}_{T,e}$ and $\mu^\gamma_{T,e}$ in the same way as $\mu_{ T}$, but with $\Pi=ip$ (respectively $id$ and $\gamma:=H m_1^2 H$) assigned to $e$ instead of $H$.

\begin{proof}[Proof of Theorem \ref{teorema B1}] 
	One can easily check the first two $A_\infty$ equations for $\mu_k$ using equations (\ref{homotopy}, \ref{homotopy_2}).	For two or more inputs we define
	$$\hat\mu_{k,\beta}(u_1,\ldots,u_k)=\sum_{\substack{T\in\Gamma_k\\e\in E({T})}} (-1)^{\vert {T}_e\vert}\hat\mu_{\overline{T},e}(u_1,\ldots,u_k)$$
	where $\vert {T}_e\vert=\sum_{i=1}^{m_e}\vert u_i\vert'$ with $m_e$ defined as the smallest $1\leq j\leq k$ such that the path from the $i^{th}$ leaf to the root does not include $e$, for all $i<j$.  Then given $e\in e(T)$, denote by $E_-$ and $E_+$ the edges of $\overline{T}$ contained in $e$. The equation (\ref{homotopy}) implies
	$$\hat\mu_{\overline{T},E_-}+\hat\mu_{\overline{T},E_+}=\mu^\Pi_{T,e}-\mu^{id}_{T,e} - \mu^{\gamma}_{T,e}.$$ 
	Therefore
	\begin{align}\label{eq:pert_1}
		\hat\mu_{k}=\sum_{\substack{T\in\Gamma_k\\ e\in E( T)\setminus e(T)}} (-1)^{\vert T_e\vert}\hat\mu_{\overline{T},e}+\sum_{\substack{T\in\Gamma_k\\ e\in e(T)}}(-1)^{\vert T_e\vert}\big(\mu^\Pi_{T,e}-\mu^{id}_{T,e}-\mu^{\gamma}_{T,e}\big).
	\end{align}
	On the other hand,
	$$\hat\mu_{k}=\sum_{T\in\Gamma_k}\sum_{v\in V(T)}\sum_{\substack{e\in E(\overline T)\\v\in \partial e}}(-1)^{\vert T_e\vert}\hat\mu_{\overline{T},e},$$
	and the $A_\infty$-equation implies that
	\begin{equation}\label{eq:pert_2}
	\hat\mu_{k}=-\sum_{T\in\Gamma_k}\sum_{v\in V(T)}\sum_{\substack{S\in\Gamma_k\\e\in e(S)\\S/e= T}}(-1)^{\vert S_e\vert}\mu^{id}_{S,e} - \sum_{\substack{S\in\Gamma_{k+1}\\S/i= T}}(-1)^{\vert S_e\vert}\mu_{S}(...u_{i-1}, \mu_0, u_{i}...).
	\end{equation}
	Here $S/e$ is the tree obtained from $S$ by collapsing the edge $e$ and $S/i$ is the tree obtained by deleting the $i$-th leaf of $S$.
	Putting (\ref{eq:pert_1}) and (\ref{eq:pert_2}) together we conclude
\begin{align}\label{eq:pert_sum}
\sum_{T\in\Gamma_k}\Big(\sum_{e \in e(T)}(-1)^{\vert T_e\vert}&(\mu^\Pi_{T,e}-\mu^{\gamma}_{T,e})+\hat\mu_{{T},r} +  \sum_{i=1}^{k} (-1)^{\vert T_e\vert}\hat\mu_{{T},l_i}\Big)\nonumber\\
& + \sum_{T\in\Gamma_k}\sum_{\substack{S\in\Gamma_{k+1}\\S/i=T}}(-1)^{\vert S_e\vert}\mu_{S}(...u_{i-1}, \mu_0, u_{i}...)=0,
\end{align}
where $r$ is the edge adjacent to the root and the $l_i$ are the edges adjacent to the leaves of $T$. It follows from the definition of $\mu_T$ that 
	$$\sum_{T\in\Gamma_k}\sum_{e \in e(T)}(-1)^{\vert T_e\vert}\mu^\Pi_{T,e}= \sum_{\substack{k_1\neq 0,1\\k_2\neq 1}}(-1)^{*}\mu_{k_2}(u_1,\ldots,\mu_{k_1}(u_{i+1},\ldots,u_{i+k_1}),\ldots,u_k).$$
Equations (\ref{homotopy}) and (\ref{homotopy_2}) imply $p m_1= \mu_1 p - p m_1^2 H$. This combined with the $A_\infty$ equation gives
$$\hat\mu_{T,r}= \mu_1 \mu_T + \mu_{C_2 \circ_2 T}(\mu_0,\ldots)+ (-1)^* \mu_{C_2 \circ_1 T}(\ldots,\mu_0),$$
where $C_2$ is the unique tree with two leaves and $C_2 \circ_i T$ is the tree obtained by grafting the root of $T$ tot he $i$-th leaf of $C_2$. 

Analogously, the identity $m_1 i= i \mu_1 - H m_1^2 i$ implies
\begin{align*}
\sum_{i=1}^{k} (-1)^{\vert T_e\vert}\hat\mu_{{T},l_i}   = & \sum_i \mu_T(..., \mu_1(u_i)),...) + \sum_i (-1)^* \mu_{T\circ_i C_2}(..., \mu_0,u_i,...)\\
& + (-1)^* \mu_{T\circ_i C_2}(...,u_i, \mu_0,...)
\end{align*}
Finally, using the fact that $-\gamma(u)=H m_2(m_0, H(u))+(-1)^{|u|}m_2(H(u), m_0)$ we have
$$(-1)^{\vert T_e\vert+1}\mu^{\gamma}_{T,e}= (-1)^* \mu_{T_1 \circ_i C_2 \circ_ 2 T_2}(..., \mu_0,u_i,...)+ (-1)^* \mu_{T_1 \circ_i C_2 \circ_ 1 T_2}(..., \mu_0,u_{i+j+1},...),$$
where $T_1$ and $T_2$ are the trees obtained from cutting $T$ along the edge $e$ and $j$ is the number of leaves in $T_2$.
	These last four identities prove that Equation (\ref{eq:pert_sum}) is equivalent to the $A_\infty$-algebra equation for the $\mu_k$. 
	
	The construction of map $\varphi: V \ra A$ is similar. We put $\varphi_{1}=i$ and $\varphi_{k}=\sum_{T\in\Gamma_k}\varphi_T$ where map $\varphi_T$ is defined in the same way as $\mu_T$, the only difference is that we assign $H$ to the root vertex (instead of $p$ as in the case of $\mu_T$). Similarly we define the auxiliary maps $$\hat\varphi_k=\sum_{\substack{T\in\Gamma_k\\e\in e(\overline T)}}(-1)^{\vert T_e\vert}\hat\varphi_{\overline T,e},$$ and for each $e\in e(T)$ we define $\varphi^{\Pi}_{T,e}$, $\varphi^{id}_{T,e}$ and $\varphi^{\gamma}_{T,e}$.
	
	The same argument we used above applies to show
	\begin{align}\label{pert_mor}
	\sum_{T\in\Gamma_k}\Big(\sum_{e \in e(T)}(-1)^{\vert T_e\vert}&(\varphi^\Pi_{T,e}-\varphi^{\gamma}_{T,e})+\hat\varphi_{{T},r} +  \sum_{i=1}^{k} (-1)^{\vert T_e\vert}\hat\varphi_{{T},l_i}\Big)\nonumber\\
	& + \sum_{T\in\Gamma_k}\sum_{\substack{S\in\Gamma_{k+1}\\S/i=T}}(-1)^{\vert S_e\vert}\varphi_{S}(...u_{i-1}, \mu_0, u_{i}...)=0,
	\end{align}
	Now using (\ref{homotopy}) again we see that
	$$\hat\varphi_{T,r}=-m_1\circ\varphi_T+i\circ\mu_T-\varphi^{id}_{T,r}-\varphi^{\gamma}_{T,r},$$
	and 
	$$\varphi^{id}_{T,r}= m_{j}(\varphi_{T_1},\ldots, \varphi_{T_j}),$$ 
	where $j$ is the valency of the vertex of $T$ closest to the root and $T_i$ are the trees obtained from cutting $T$ at the incoming edges at that vertex.
    One can now see by the same argument that  Equation (\ref{pert_mor}) is equivalent to the $A_\infty$ homomorphism equation for $\varphi$:
	\begin{align}
		\sum_{j, \ i_1+\ldots+i_j=k}m_{j}\big(&\varphi_{i_1}(u_1,\ldots,u_{i_1}),\ldots,\varphi_{i_j}(\ldots,u_k)\big)\nonumber\\
		-&\sum_{\substack{0\leq j\leq k\\0\leq i\leq k-j}}(-1)^{*}\varphi_{k-j+1}\big(u_1,\ldots,\mu_{j}(u_{i+1},\ldots,u_{i+j}),\ldots,u_k\big)=0.\nonumber
	\end{align}
\end{proof}

\begin{rmk}
	In the case of uncurved $A_\infty$-algebras, there are also explicit formulas for a homomorphism $\psi:(A,m_k)\ra (V',\mu_k)$ with $\psi_1=p$ and a homotopy $\mathcal{H}:(A,m_k)\ra(A,m_k)$ between $\varphi\circ \psi$ and $id_A$. See \cite{markl} for this construction.
\end{rmk}

\subsection{The $L_\infty$ case} The discussion in the $L_\infty$ case is very much the same as the $A_\infty$ case, except that instead of using planar stable rooted trees in the formulas, one uses isomorphism classes of stable rooted trees. 

The only difference is how to define the map $\mu_T$ for each tree $T$ (as opposed to a planar tree): we pick $\widetilde{T}$ a planar embedding of $T$ and define $\mu_{\widetilde{T}}$ as before. Then we take $\mu_T= \frac{1}{|{\sf Aut}( T)|}\mu_{\widetilde{T}}\circ {\sf Sh}$ where ${\sf Sh}$ is the symmetrization map and $|{\sf Aut}(T)|$ is the order of the automorphism group of $T$. We refer to~\cite[Section 4]{FM} for a detailed treatment of this construction.

The rest of the proof is exactly the same.

\section{The category of curved $L_\infty$ spaces}\label{sec:spaces}

In this section, we recall basic definitions of curved $L_\infty$ spaces, morphisms between these spaces and describe the notion of homotopy between morphisms.

\subsection{Curved $L_\infty$ spaces} A curved $L_\infty$ space (sometimes shortened to $L_\infty$ space) is a pair $(M,\mathfrak{g})$ where $M$ is a smooth manifold, and $\mathfrak{g}$ is a $\Z$-graded vector bundle over $M$ of the form
\[ \mathfrak{g}=\mathfrak{g}_2\oplus \mathfrak{g}_3 \oplus \cdots\oplus \mathfrak{g}_d \]
for some $d\geq 2$, together with bundle maps $\mu_k: {\sf sym}^k (\mathfrak{g}[1]) \ra \mathfrak{g}[1]$ of degree one such that the $L_\infty$ equation holds: 
\[ \sum_{k=0}^n \sum_{\sigma\in {\sf Sh}(k,n-k)} \epsilon_\sigma\cdot \mu_{n-k+1}\big(\mu_k(a_{\sigma(1)},\ldots,a_{\sigma(k)}),a_{\sigma(k+1)},\ldots,a_{\sigma(n)}\big)=0\]
where ${\sf Sh}(k,n-k)$ consists of $(k,n-k)$-type shuffles, and $\epsilon_\sigma$ is the Koszul sign associated with the permutation $a_1\otimes\cdots\otimes a_n \mapsto a_{\sigma(1)},\ldots,a_{\sigma(n)}$ with the $a$'s considered as elements of $\mathfrak{g}[1]$.

In order to formulate a good notion of homotopy between morphisms of $L_\infty$ spaces we will need ``special" connections on $T_M$ and $\mathfrak{g}$. Therefore we make the following assumption:
$$ \textrm{$T_M$ has a torsion-free, flat connection and $\mathfrak{g}$ has a flat connection.}$$
 In fact, it would be enough for most purposes to require the existence of these connections on an open neighborhood of $\mu_0^{-1}(0)$. But for simplicity we stick to the whole $M$.

 The main results in this paper are local, meaning $M$ is an open ball in $\mathbb{R}^n$, therefore this assumption is trivially satisfied.
\medskip

A morphism from $(M,\mathfrak{g})$ to $(N,\mathfrak{h})$ is a pair $\mathfrak{f}=(f,f^\sharp)$ where $f: M\ra N$ is a smooth map, and $f^\sharp: \mathfrak{g} \ra f^*\mathfrak{h}$ is a homomorphism of $L_\infty$ algebras. This means a sequence of (degree zero) bundle maps $f^\sharp_k:  {\sf sym}^k(\mathfrak{g}[1]) \ra f^*\mathfrak{h}[1]$ satisfying
\begin{align*}
 \sum_k \frac{1}{k!}\sum_{\sigma}&\epsilon_\sigma\cdot \mu_k\big(f^\sharp_{i_1}(a_{\sigma(1)}\cdots)\cdots f^\sharp_{i_k}(\cdots a_{\sigma(n)})\big)\\
 = & \sum_{r}\sum_\tau \epsilon_\tau\cdot f^\sharp_{n-r+1}\big(\mu_r(a_{\tau(1)}\cdots a_{\tau(r)})\cdots a_{\tau(n)}\big),\end{align*}
where $\sigma$ is a $(i_1,\cdots,i_k)$ type shuffle, and $\tau$ is a $(r,n-r)$ type shuffle. On the left-hand side there is an abuse of notation: $\mu_k$ stands for $f^*\mu_k$.

Morphisms of $L_\infty$ spaces can be composed similarly to the algebra case. Given $L_\infty$ morphisms $\mathfrak{e}: (M',\mathfrak{g}') \to (M,\mathfrak{g}) $ and $\mathfrak{f}: (M,\mathfrak{g}) \to (N,\mathfrak{h}) $ we define $\mathfrak{f}\circ\mathfrak{e}:=(f\circ e, f^\sharp\circ e^\sharp)$ where
\begin{equation}\label{eq:comp}
(f^\sharp\circ e^\sharp)_n(a_1\cdots a_k) =\frac{1}{k!}\sum_{\sigma}\epsilon_\sigma\cdot e^*(f^\sharp_k)\big(e^\sharp_{i_1}(a_{\sigma(1)}\cdots)\cdots e^\sharp_{i_k}(\cdots a_{\sigma(n)})\big)
\end{equation}

As in the algebra case, we also define  $L_{(n)}$ morphisms between curved $L_\infty$ spaces.

\subsection{Extensions of $L_\infty$ structures} Let $(M,\mathfrak{g})$ be a $L_\infty$ space. By our assumptions, we can choose a torsion-free, flat connection on $T_M$ and also a flat connection on the bundle $\mathfrak{g}$. We set
\[ \widetilde{\mathfrak{g}}:= T_M \oplus \mathfrak{g},\]
with $T_M$ at cohomological degree one. The $L_\infty$ structure on $\mathfrak{g}$ naturally extends to $\widetilde{\mathfrak{g}}$ by inductively applying the following formula
\begin{equation}~\label{eq:extension}
  \mu_{k+1}(X\cdot \alpha_1\cdot\cdots\cdot\alpha_k):= \nabla_X\mu_k(\alpha_1\cdot\cdots\cdot\alpha_k)- \sum_{j=1}^k \mu_k(\alpha_1\cdot\cdots\nabla_X\alpha_j\cdots\cdot\alpha_k)
\end{equation}
Using the torsion freeness and the flatness, one can verify that when pulling out tangent vectors using the above formula, the choice of order does not matter, i.e. we have that
\[ \mu_{k+2}(X\cdot Y \cdot \alpha_1\cdots\alpha_k ) = \mu_{k+2}(Y\cdot X \cdot \alpha_1\cdots\alpha_k)\]
for any two tangent vectors $X,Y\in T_M$. 

\begin{lem}
 Equation~\eqref{eq:extension} defines a $L_\infty$ algebra structure on $\widetilde{\mathfrak{g}}$.
\end{lem}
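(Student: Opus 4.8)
The plan is to verify the $L_\infty$ relations on $\widetilde{\mathfrak{g}} = T_M \oplus \mathfrak{g}$ directly, organizing the computation so that the flat, torsion-free connection does the bookkeeping. Since every element of $\widetilde{\mathfrak{g}}$ is a sum of a tangent part (degree one) and a $\mathfrak{g}$-part, and since the extension formula \eqref{eq:extension} pulls tangent vectors out one at a time, the key structural fact is that any $\mu_{k+r}$ applied to $r$ tangent vectors and $k$ elements of $\mathfrak{g}$ is obtained by applying an $r$-fold iterated covariant derivative $\nabla_{X_1}\cdots\nabla_{X_r}$ to $\mu_k$ on the $\mathfrak{g}$-inputs (with the Leibniz correction terms), and that by the already-noted symmetry $\mu_{k+2}(X\cdot Y\cdots) = \mu_{k+2}(Y\cdot X\cdots)$ the order of extraction is irrelevant. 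I would first record this iterated-derivative description as a formula, so that the whole higher structure is encoded by the operators $\mu_k$ on $\mathfrak{g}$ together with the single connection $\nabla$.

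Next I would split the verification according to how many tangent inputs appear in the $L_\infty$ relation being checked, say the relation with $m$ total inputs of which $s$ are tangent vectors $X_1,\ldots,X_s$ and $m-s$ are sections of $\mathfrak{g}$. For $s=0$ this is exactly the original $L_\infty$ relation on $\mathfrak{g}$, which holds by hypothesis. For $s\geq 1$ the strategy is induction on $s$: I would apply $\nabla_{X_1}$ to the $s{-}1$ relation and then rewrite each term using \eqref{eq:extension} to absorb $X_1$ into the appropriate $\mu_k$. Because $\nabla$ is a derivation, applying $\nabla_{X_1}$ to the (vanishing) $s{-}1$ relation produces a sum over all ways of inserting $X_1$ into one of the $\mu$'s plus all the Leibniz terms hitting the surviving $\mathfrak{g}$-inputs; the point is that this is precisely the $L_\infty$ expression with $X_1$ now treated as a genuine input, reassembled by the extension formula. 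The degree conventions (tangent vectors in degree one, so degree zero after the shift $[1]$) and the Koszul signs from the shuffles must be tracked, but the flatness of $\nabla$ guarantees that curvature terms which would otherwise obstruct commuting derivatives all vanish.

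The main obstacle I anticipate is the sign and combinatorial bookkeeping when $X_1$ is extracted from a \emph{nested} composition $\mu_{n-k+1}(\mu_k(\ldots),\ldots)$: applying $\nabla_{X_1}$ to such a composite by Leibniz can place $X_1$ either in the outer $\mu$ or the inner $\mu$, and one must check that the two placements match the two types of terms appearing in the target relation on $\widetilde{\mathfrak{g}}$, with compatible Koszul signs. Torsion-freeness enters exactly here, ensuring that the symmetrization over the tangent inputs is consistent and that $\nabla_{X}\nabla_{Y} - \nabla_{Y}\nabla_{X}$ (which by flatness is $\nabla_{[X,Y]}$, and by torsion-freeness interacts correctly with the symmetric product) does not spoil the claimed symmetry $\mu_{k+2}(X\cdot Y\cdots)=\mu_{k+2}(Y\cdot X\cdots)$ already asserted in the excerpt. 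Once that symmetry and the single inductive step are in place, the relation for arbitrary $(m,s)$ follows, and I would then conclude that $\widetilde{\mathfrak{g}}$ satisfies all $L_\infty$ axioms, completing the proof.
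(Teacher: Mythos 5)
Your proposal is correct and follows essentially the same route as the paper: induction on the number of tangent-vector inputs, with all inputs taken $\nabla$-flat, where applying $\nabla_{X_1}$ to the relation with one fewer tangent vector and redistributing via the Leibniz rule reproduces exactly the extension formula's terms, the inner/outer placement corrections cancelling in pairs. The paper merely runs the same computation in the opposite direction (extracting a tangent vector from the $s$-input relation to reduce to the $s-1$ case), so there is no substantive difference.
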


\begin{proof}
We prove the $L_\infty$ identity by induction on the total number of tangent vectors. Indeed, when there is no tangent vector, the $L_\infty$ identity holds since $\mathfrak{g}$ forms an $L_\infty$ algebra to begin with. We want to verify the $L_\infty$ identity:
\[ \sum_{k=1}^n \sum_{\sigma\in {\sf Sh}(k,n-k)} \epsilon_\sigma\cdot \mu_{n-k+1}\big(\mu_k(a_{\sigma(1)},\ldots,a_{\sigma(k)}),a_{\sigma(k+1)},\ldots,a_{\sigma(n)}\big)=0\]
It is enough to consider the case when all the inputs $a$'s are flat with respect to the chosen connection $\nabla$. Now we pick a tangent vector, say $a_1$, among the inputs and apply Equation~\eqref{eq:extension} to pull it out of the inputs. If $a_1$ falls into $a_{\sigma(1)},\ldots,a_{\sigma(k)}$, we obtain terms of the form
\[ \sum\sum \epsilon_\sigma \mu_{n-k+1}\big(\nabla_{a_1}\mu_{k-1}(\cdots),\cdots\big)\]
When $a_1$ falls into $a_{\sigma(k+1)},\ldots,a_{\sigma(n)}$, we get
\[\nabla_{a_1}\big( \sum\sum \epsilon_\sigma \mu_{n-k}(\mu_k(\cdots),\cdots)\big)-\sum\sum \epsilon_\sigma \mu_{n-k+1}\big(\nabla_{a_1}\mu_{k-1}(\cdots),\cdots\big)\]
Thus, their sum yields $\nabla_{a_1}\big( \sum\sum \epsilon_\sigma \mu_{n-k}(\mu_k(\cdots),\cdots)\big)$ which vanishes by induction.
\end{proof}

$L_\infty$ morphisms between $L_\infty$ spaces can also be extended to the tangent bundles. More precisely, let $(f,f^\sharp): (M,\mathfrak{g}) \ra (N, \mathfrak{h})$ be a morphism of $L_\infty$ spaces
and, as above, choose torsion free and flat connections on both spaces and consider the extended $L_\infty$ algebras $\widetilde{\mathfrak{g}}, \widetilde{\mathfrak{h}}$. We extend the homomorphism $f^\sharp$ to a homomorphism
\[ f^\sharp: \widetilde{\mathfrak{g}} \ra f^*\widetilde{\mathfrak{h}},\]
which we still denote by $f^\sharp$.
The formula of extension is the same as in Equation~\eqref{eq:extension}, i.e. we inductively define
\begin{equation}~\label{eq:ext-mor}
 f^\sharp_{k+1} (X\cdot \alpha_1\cdots\alpha_k): = \nabla_Xf^\sharp_k(\alpha_1\cdot\cdots\cdot\alpha_k)- \sum_{j=1}^k f^\sharp_k(\alpha_1\cdot\cdots\nabla_X\alpha_j\cdots\cdot\alpha_k)
\end{equation}
The difference is that here we need $k\geq 1$. When $k=0$ we define the map $f_1^\sharp: T_M \ra f^*T_N$ to be the tangent map $df$.

\begin{lem}
The maps defined in Equation~\eqref{eq:ext-mor} form a $L_\infty$ morphism $f^\sharp: \widetilde{\mathfrak{g}} \ra f^*\widetilde{\mathfrak{h}}$.
\end{lem}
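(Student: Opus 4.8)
The plan is to prove that the extended maps $f^\sharp_k$ satisfy the $L_\infty$ homomorphism equation by induction on the total number of tangent vectors appearing among the inputs, in direct parallel with the proof of the preceding lemma establishing that $\widetilde{\mathfrak{g}}$ is an $L_\infty$ algebra. The base case, where no tangent vectors appear, is exactly the hypothesis that $f^\sharp: \mathfrak{g} \ra f^*\mathfrak{h}$ is an $L_\infty$ morphism to begin with. As before, I may reduce to testing the equation on inputs $\alpha_j$ that are flat with respect to the chosen connections, so that all derivatives $\nabla \alpha_j$ vanish and only the $\nabla_X$ acting on the outermost structure map survives when Equation~\eqref{eq:ext-mor} is applied.

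For the inductive step, I would single out one tangent-vector input, say $X=a_1$, and use \eqref{eq:ext-mor} to pull it out of whichever $f^\sharp$-factor or $\mu$-factor it lands in. The $L_\infty$ homomorphism equation has two sides: the ``outer $\mu^{\mathfrak{h}}_k$ of several $f^\sharp$'' side, and the ``$f^\sharp$ of one $\mu^{\mathfrak{g}}_r$'' side. When $a_1$ is pulled out, each term either produces a $\nabla_{a_1}$ hitting a lower-arity term (one fewer tangent vector) or, in the terms where $a_1$ is a genuine outermost argument, contributes $\nabla_{a_1}$ applied to the entire defining expression with $a_1$ removed. The key bookkeeping point is that the connection $\nabla$ on $f^*\widetilde{\mathfrak{h}}$ is compatible with pullback, so that $\nabla_{a_1}$ commutes past $f^*\mu^{\mathfrak{h}}_k$ and the pulled-back structure in the expected Leibniz fashion. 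Collecting terms, the interior $\nabla_{a_1}$-contributions cancel against each other exactly as in the algebra lemma, and the remaining terms assemble into $\nabla_{a_1}$ applied to the $L_\infty$ homomorphism equation with $n-1$ inputs, which vanishes by the inductive hypothesis.

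The hard part will be the sign and shuffle bookkeeping, together with verifying that $\nabla_{a_1}$ interacts correctly with the pullback $f^*$ and with the fact that $f^\sharp_1$ on tangent vectors is defined by hand as $df$ rather than by the recursion. Concretely, one must check that applying $\nabla_X$ to a composite of the form $f^*\mu^{\mathfrak{h}}_k\big(f^\sharp_{i_1}\otimes\cdots\otimes f^\sharp_{i_k}\big)$ produces precisely the terms predicted by \eqref{eq:ext-mor} acting on each factor, which relies on the flatness of the connection on $\mathfrak{h}$ and on $df$ being the tangent map; the torsion-freeness guarantees, as noted after \eqref{eq:extension}, that the order in which multiple tangent vectors are extracted is immaterial, so the induction is well-posed. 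Once these compatibilities are in place, the cancellation is formally identical to the $L_\infty$-algebra case, and I would present the argument by emphasizing the new pullback-compatibility input while referring to the previous lemma for the structure of the cancellation rather than repeating it in full.
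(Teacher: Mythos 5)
Your proposal is correct and follows essentially the same route as the paper: induction on the number of tangent-vector inputs, reduction to flat inputs, pulling one tangent vector out via Equation~\eqref{eq:ext-mor}, cancelling the Leibniz-correction terms against the terms where that vector lands inside a higher-arity component $f^\sharp_{i_j}$, and then invoking the inductive hypothesis together with the defining recursion to recover the $n$-input identity. The compatibility points you flag (pullback connection, $f^\sharp_1=df$ on tangent vectors, torsion-freeness for well-posedness) are exactly the ones the paper relies on.
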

\begin{proof}
We need to verify that
\begin{align*}
 \sum_k \frac{1}{k!}\sum_{\sigma}\epsilon_\sigma\cdot& \mu_k\big(f^\sharp_{i_1}(a_{\sigma(1)}\cdots)\cdots f^\sharp_{i_k}(\cdots a_{\sigma(n)})\big)\\
= & \sum_{r}\sum_\tau \epsilon_\tau\cdot f^\sharp_{n-r+1}\big(\mu_r(a_{\tau(1)}\cdots a_{\tau(r)})\cdots a_{\tau(n)}\big)\end{align*}
 Let us pick up a tangent vector, say $a_1$ among the inputs. Also we assume that all the input vectors are flat. If $a_1$ is inside $f_{i_j}$, and $i_j=1$, we get 
\begin{align*}
&  \sum_k \frac{1}{(k-1)!}\sum_{\sigma}\epsilon_\sigma\cdot \nabla_{a_1} \mu_{k-1}\big(f^\sharp_{i_1}(a_{\sigma(1)}\cdots)\cdots \widehat{f_{1}(a_1)}\cdots f^\sharp_{i_{k}}(\cdots a_{\sigma(n)})\big)\\
-&  \sum_k \frac{1}{(k-1)!}\sum_{\sigma}\epsilon_\sigma\cdot  \mu_{k-1}\big(f^\sharp_{i_1}(a_{\sigma(1)}\cdots)\cdots \widehat{f_{1}(a_1)}\cdots\nabla_{a_1}f^\sharp_{i_l}(\cdots)\cdots f^\sharp_{i_{k}}(\cdots a_{\sigma(n)})\big)
\end{align*}
The coefficient becomes $\frac{1}{(k-1)!}$ since there are $k$ possible choices of $j$.
The second term cancels precisely the terms with $a_1$ inside $f_{i_j}$ with $i_j\geq 2$. Thus the left hand side is equal to (by induction on the total number of tangent vectors)
\begin{align*}
& \nabla_{a_1}\Big( \sum_k \frac{1}{(k-1)!}\sum_{\sigma}\epsilon_\sigma\cdot  \mu_{k-1}\big(f^\sharp_{i_1}(a_{\sigma(1)}\cdots)\cdots \widehat{f_{1}(a_1)}\cdots f^\sharp_{i_{k}}(\cdots a_{\sigma(n)})\Big)\\
 =&  \nabla_{a_1}\Big( \sum_{r}\sum_\tau \epsilon_\tau\cdot f^\sharp_{n-r}\big(\mu_r(\cdots)\cdots \big)\Big)\\
 = & \sum_{r}\sum_\tau \epsilon_\tau\cdot f^\sharp_{n-r+1}\big(\mu_r(a_{\tau(1)}\cdots a_{\tau(r)})\cdots a_{\tau(n)}\big)
 \end{align*}
 which is exactly the right hand side.
\end{proof}

These extensions of $L_\infty$ spaces induced by the choice of connections are in fact independent of these choices up to isomorphism.

\begin{lem}\label{lem:independence}
	Let $\nabla$ and $\nabla'$ be torsion free and flat connections. Let $\widetilde{\mathfrak{g}}$ and $\widetilde{\mathfrak{g}}'$ be the associated extended $L_\infty$ algebras. Then there is an isomorphism
	\[ \Phi^\mathfrak{g}: \widetilde{\mathfrak{g}} \to \widetilde{\mathfrak{g}}'\]
	defined by $\Phi_1^\mathfrak{g}=\id$, $\Phi_2^\mathfrak{g}(X,\alpha)=(\nabla'_X - \nabla_X)(\alpha)$, and for $k\geq 3$ by the recursive formula
	\[ \Phi_{k}^\mathfrak{g}(X\cdot \alpha_1\cdot\cdots\cdot\alpha_{k-1}):= \nabla_X'\Phi_{k-1}^\mathfrak{g}(\alpha_1\cdot\cdots\cdot\alpha_{k-1})- \sum_{j=1}^{k-1} \Phi_{k-1}^\mathfrak{g}(\alpha_1\cdot\cdots\nabla_X\alpha_j\cdots\cdot\alpha_k).\]
	
	Moreover this isomorphism is natural:  given a $L_\infty$ morphism $(f,f^\sharp): (M,\mathfrak{g}) \to (N, \mathfrak{h})$ and different choices of connections, the induced extended $L_\infty$ morphisms $ f^\sharp: \widetilde{\mathfrak{g}} \ra f^*\widetilde{\mathfrak{h}}$ and $ (f^\sharp)': \widetilde{\mathfrak{g}'} \ra f^*\widetilde{\mathfrak{h}'}$ satisfy $\displaystyle \Phi^\mathfrak{h} \circ f^\sharp = (f^\sharp)' \circ \Phi^\mathfrak{g}$.
	
\end{lem}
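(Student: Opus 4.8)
The plan is to establish two things in turn, and then the naturality clause by a parallel argument: first, that $\Phi^\mathfrak{g}$ is an $L_\infty$ morphism $\widetilde{\mathfrak{g}}\to\widetilde{\mathfrak{g}}'$, and second, that it is invertible. The crucial observation is that the recursive formula defining $\Phi^\mathfrak{g}$ is, for all $k\geq 2$, formally identical to the morphism-extension recursion \eqref{eq:ext-mor}: the target connection $\nabla'$ is applied to the output and the source connection $\nabla$ to the inputs (indeed $\Phi_2^\mathfrak{g}(X,\alpha)=\nabla'_X\Phi_1^\mathfrak{g}(\alpha)-\Phi_1^\mathfrak{g}(\nabla_X\alpha)=(\nabla'_X-\nabla_X)\alpha$ is itself the $k=2$ instance of the recursion). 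I would therefore reprove the $L_\infty$ morphism equation for $\Phi^\mathfrak{g}$ by the very same strategy used in the preceding lemma to show $f^\sharp$ extends to a morphism, namely by induction on the total number of tangent-vector inputs drawn from $T_M$.

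For the base case with no tangent vectors, I would note that the extension formula \eqref{eq:extension} leaves the structure maps $\mu_k$ on pure $\mathfrak{g}$-inputs unchanged, so $\widetilde{\mathfrak{g}}$ and $\widetilde{\mathfrak{g}}'$ carry the \emph{same} $L_\infty$ structure there. Since $\Phi_1^\mathfrak{g}=\id$ and $\Phi_k^\mathfrak{g}$ vanishes on pure $\mathfrak{g}$-inputs for $k\geq 2$ (the recursion produces nothing without a tangent vector to pull out), the morphism equation collapses to the assertion that the identity is an $L_\infty$ morphism between these identical structures, which is trivial. For the inductive step I would single out one tangent vector $a_1=X$ among the inputs and pull it out using the recursions for $\mu'$ (on outputs, via $\nabla'$), for $\mu$ (on inputs, via $\nabla$), and for $\Phi^\mathfrak{g}$. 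As in the previous lemma, the terms where $X$ is differentiated in a $\Phi_1^\mathfrak{g}=\id$ slot and the terms where it is differentiated elsewhere are arranged to recombine into $\nabla'_X$ applied to the morphism equation with one fewer tangent vector, which vanishes by the inductive hypothesis. Flatness and torsion-freeness enter exactly as before: they furnish the symmetry $\mu_{k+2}(X\cdot Y\cdots)=\mu_{k+2}(Y\cdot X\cdots)$ needed for the graded-symmetric expressions to be consistent, and they permit commuting $\nabla_X$ and $\nabla'_X$ past one another.

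Invertibility is then immediate, since an $L_\infty$ morphism whose linear part is a linear isomorphism admits an $L_\infty$ inverse built by the standard recursion, and $\Phi_1^\mathfrak{g}=\id$. For naturality, both composites $\Phi^\mathfrak{h}\circ f^\sharp$ and $(f^\sharp)'\circ\Phi^\mathfrak{g}$ are $L_\infty$ morphisms $\widetilde{\mathfrak{g}}\to f^*\widetilde{\mathfrak{h}}'$; on pure $\mathfrak{g}$-inputs each collapses, using $\Phi_1=\id$ and the vanishing of the higher $\Phi_k$ on pure inputs, to the original morphism $f^\sharp$, so they agree on this seed. I would then check by induction on the number of tangent vectors that both composites obey the same pull-out recursion, with $\nabla'_X$ on the output and $\nabla_X$ on the inputs, whence, agreeing on the seed and satisfying the same recursion, they coincide identically.

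The main obstacle I anticipate is the bookkeeping in the inductive step of the first part: because $\Phi^\mathfrak{g}$ interpolates between two \emph{different} extended structures, one must carry the source maps $\mu$ (extended via $\nabla$) and the target maps $\mu'$ (extended via $\nabla'$) simultaneously and verify that the mismatch between the connections is absorbed entirely by the $\Phi_2^\mathfrak{g}=\nabla'-\nabla$ term and its higher iterates. The analogous difficulty in the naturality step is to make precise the claim that both composites ``satisfy the same recursion''; this is really a uniqueness statement, that an $L_\infty$ morphism out of $\widetilde{\mathfrak{g}}$ into a $\nabla'$-extended target is determined by its restriction to pure $\mathfrak{g}$-inputs once it is known to respect the extension recursion.
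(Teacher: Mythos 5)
Your proposal is correct and follows essentially the same route as the paper: induction on the number of tangent-vector inputs, with the base case being that the two extended structures agree on pure $\mathfrak{g}$-inputs where $\Phi$ reduces to the identity, and the inductive step pulling out one tangent vector (assumed to act on $\nabla$-flat inputs) so that the connection mismatch is absorbed by $\Phi_2=\nabla'-\nabla$; the paper likewise dispatches naturality as "entirely analogous." No substantive difference.
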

\begin{proof}
	As before we prove the $L_\infty$ homomorphism equation by induction on the number of inputs that are tangent vectors. When there is no tangent vector, the operations $\mu_k$ and $\mu'_k$ agree and $\Phi$ is just the identity. Let us now pick up a tangent vector, say $a_1$ among the inputs. For simplicity we assume that all the inputs are flat with respect to $\nabla$. Let's consider the left-hand side of the $L_\infty$ equation, when $a_1$ is inside $\Phi_{i_j}$ with $i_j=1$, we get 
	\begin{align*}
	&  \sum_k \frac{1}{(k-1)!}\sum_{\sigma}\epsilon_\sigma\cdot \nabla'_{a_1} \mu'_{k-1}\big(\Phi_{i_1}(a_{\sigma(1)}\cdots)\cdots \widehat{\Phi_{1}(a_1)}\cdots \Phi_{i_{k}}(\cdots a_{\sigma(n)})\big)\\
	-&  \sum_k \frac{1}{(k-1)!}\sum_{\sigma}\epsilon_\sigma\cdot  \mu'_{k-1}\big(\Phi_{i_1}(a_{\sigma(1)}\cdots)\cdots \widehat{\Phi_{1}(a_1)}\cdots\nabla'_{a_1}\Phi_{i_l}(\cdots)\cdots \Phi_{i_{k}}(\cdots a_{\sigma(n)})\big),
	\end{align*}
	using the definition of $\mu'_k$ and the fact $\Phi_1=\id$. The second sum above exactly cancels with the other terms in the left-hand side of the $L_\infty$-homomorphism equation with $a_1$ inside $f_{i_j}$ with $i_j\geq 2$. This is because the $a_i$ are $\nabla$ flat and $\Phi_2(a_1, a_j)= \nabla'_{a_1}a_j$. Therefore, by induction hypothesis, the left-hand side equals
	\begin{align}\label{eq:phi}
	\nabla'_{a_1}\Big( \sum_{r}\sum_\tau \epsilon_\tau\cdot \Phi_{n-r}\big(&\mu_r(a_{\tau(2)}\cdots)\cdots a_{\tau(n)}\big)\Big)  =   \nabla'_{a_1}\big(\mu_{n-1}(a_{2}\cdots a_{n})\big)+\nonumber\\
	& + \sum_{r\leq n-2}\sum_\tau \epsilon_\tau\cdot \Phi_{n+1-r}\big(a_1 \cdot \mu_r(a_{\tau(2)}\cdots)\cdots a_{\tau(n)}\big)\\
	& + \sum_{r\leq n-2}\sum_\tau \epsilon_\tau\cdot \Phi_{n-r}\big(\nabla_{a_1}\big(\mu_r(a_{\tau(2)}\cdots)\big)\cdots a_{\tau(n)}\big).\nonumber
	\end{align}
	Here the first term equals $\Phi_2(a_1, \mu_{n-1}(a_{2}\cdots a_{n}))$ and in the third term we have $\nabla_{a_1}\big(\mu_r(a_{\tau(2)}\cdots)=\mu_{r+1}(a_1 \cdot a_{\tau(2)}\cdots)$. Hence (\ref{eq:phi}) equals
	\[\sum_{r}\sum_\tau \epsilon_\tau\cdot \Phi_{n-r+1}\big(\mu_r(a_{\tau(1)}\cdots a_{\tau(r)})\cdots a_{\tau(n)}\big)\]
	More precisely, the first two terms in (\ref{eq:phi}) correspond above to the terms where $a_1$ is outside the $\mu_r$. 
	
	The proof of the naturality statement is entirely analogous and we omit it.
\end{proof}

\subsection{Homotopy of $L_\infty$ morphisms}
In order to define the notion of homotopy we need to consider the space version of tensoring with $\Omega_{[0,1]}^*$ as in Subsection \ref{subsec:Ahomotopy}. Let $(M,\mathfrak{g})$ and $(N,\mathfrak{h})$ be two $L_\infty$ spaces and let $F: M\times [0,1] \ra N$ be a smooth map. Consider the graded bundle $F^*\widetilde{\mathfrak{h}}_{[0,1]}:=F^*\widetilde{\mathfrak{h}}\otimes \pi_2^*\Omega_{[0,1]}^*$, where $\pi_2: M\times [0,1] \ra [0,1]$ is the projection and denote $\mu^t_k= (f^t)^*\mu_k$, $f^t=F(-,t)$. On $F^*\widetilde{\mathfrak{h}}_{[0,1]}$ we define the operations
\begin{align}
\mu^{\otimes}_0 & := \mu^t_0 - (dF/dt)dt\nonumber\\
\mu^{\otimes}_1(x(t)+y(t)dt) & := \mu^t_1(x(t))+\mu^t_1(y(t))\,dt + (-1)^{|x(t)|} \nabla_{d/dt}(x(t)))dt\\
\mu^{\otimes}_k(...,x_i(t)+y_i(t)dt,... ) & := \mu^t_k(..., x_i(t),...) + \sum_i (-1)^\dagger \mu^t_k(x_1(t),...,y_i(t),...x_k(t))dt,  \nonumber
\end{align}
for $ k\geq 2$, where $\dagger=\sum_{a=i+1}^{k} |x_a|'$.

\begin{lem}
	The operations $\mu^\otimes_k$ define a curved $L_\infty$ algebra structure on $F^*\widetilde{\mathfrak{h}}_{[0,1]}$.
\end{lem}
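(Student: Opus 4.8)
The plan is to verify the curved $L_\infty$ algebra equation for the operations $\mu^\otimes_k$ on $F^*\widetilde{\mathfrak{h}}_{[0,1]}$ by reducing it to the already-established $L_\infty$ structure on each fiber $F^*\widetilde{\mathfrak{h}}$ at fixed $t$. The point is that $F^*\widetilde{\mathfrak{h}}_{[0,1]}$ decomposes, as a graded module, into a ``$dt$-free'' part and a ``$dt$-part'', corresponding to the splitting $\Omega_{[0,1]}^* = \Omega^0_{[0,1]} \oplus \Omega^1_{[0,1]}$. The operations $\mu^\otimes_k$ preserve this bookkeeping in a controlled way: on inputs with no $dt$, the leading term is just $\mu^t_k$ applied fiberwise at each $t$, plus correction terms (a curvature correction $-(dF/dt)\,dt$ in $\mu^\otimes_0$, and a covariant derivative $\nabla_{d/dt}$ in $\mu^\otimes_1$) that all land in the $dt$-part.

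First I would write out the $L_\infty$ relation $\sum_{k}\sum_{\sigma}\epsilon_\sigma\,\mu^\otimes_{n-k+1}(\mu^\otimes_k(\cdots),\cdots)=0$ and separate it into its $dt$-free component and its $dt$-component, since a form-valued identity on $[0,1]$ holds if and only if both components vanish. The $dt$-free component only ever sees the operations $\mu^t_k$ acting fiberwise, so it is exactly the pointwise $L_\infty$ relation for $F^*\widetilde{\mathfrak{h}}$ at each fixed $t\in[0,1]$, which holds by the earlier lemma (the extension of $\mathfrak{h}$ along $f^t$). This disposes of half the identity essentially for free.

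The real content is the $dt$-component, and this is the step I expect to be the main obstacle. Here the terms fall into several families: (a) those where the extra $dt$ comes from differentiating an input by $\nabla_{d/dt}$ inside $\mu^\otimes_1$; (b) those where it comes from the curvature term $-(dF/dt)\,dt$ in $\mu^\otimes_0$; and (c) those where it comes from a $dt$-valued input being fed through the higher $\mu^\otimes_k$. The key identity to exploit is that $\nabla_{d/dt}$ is a derivation of the fiberwise operations $\mu^t_k$ up to the curvature of the connection, together with the compatibility $\frac{d}{dt}\mu^t_k = $ (covariant derivative along the path) $+$ (terms involving $dF/dt$ contracted into the bundle operations). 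Concretely, I would show that the $dt$-component of the $L_\infty$ relation is precisely the $t$-derivative (along $d/dt$) of the fiberwise $L_\infty$ relation, with the curvature term $-(dF/dt)dt$ supplying exactly the inhomogeneous pieces that appear when $\nabla_{d/dt}$ fails to commute past $\mu^t_k$. Since the flat connections make the relevant curvature vanish and the fiberwise relation holds identically in $t$, its derivative vanishes and the $dt$-component follows.

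To organize the computation I would argue by induction on $n$ (the number of inputs), mirroring the structure of the earlier proofs in the section, and I would fix the sign conventions by treating $dt$ as an odd element commuting with the operations up to the Koszul sign $\dagger=\sum_{a=i+1}^k|x_a|'$ already built into the definition of $\mu^\otimes_k$. The bookkeeping of these signs when $dt$ is passed across inputs, and ensuring the factor $(-1)^{|x(t)|}$ in $\mu^\otimes_1$ matches the signs produced when $\nabla_{d/dt}$ is extracted from a nested composition, is the delicate part; everything else is a direct consequence of the flatness of the connections and the fiberwise lemma. I would therefore present the $dt$-free part briefly, then concentrate the argument on verifying that the three families (a), (b), (c) of $dt$-terms assemble into $\nabla_{d/dt}$ applied to the vanishing fiberwise relation.
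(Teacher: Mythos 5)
Your proposal is correct and follows essentially the same route as the paper: the paper's (very terse) proof invokes the general fact that tensoring an $L_\infty$-algebra with a commutative dg-algebra yields an $L_\infty$-algebra, together with the single identity $\nabla_{d/dt}\,\mu^t_k(a_1,\ldots,a_k)=\mu^\otimes_{k+1}\bigl(\tfrac{\partial F}{\partial t}dt,a_1,\ldots,a_k\bigr)$, which is precisely the ``key identity'' your $dt$-component analysis isolates (the failure of $\nabla_{d/dt}$ to commute with $\mu^t_k$ being cancelled by the $-\,(dF/dt)\,dt$ term in $\mu^\otimes_0$). Only a cosmetic remark: the failure term comes from the $t$-dependence of $f^t$ rather than from connection curvature (the connections are flat by assumption), but you correct for this yourself, so the argument stands.
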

\begin{proof}
The proof is standard, it follows from the fact that the tensor product of a $L_\infty$-algebra and a commutative dg-algebra is again a $L_\infty$-algebra together with the relation
$$\nabla_{\frac{d}{dt}} \mu^t_k(a_1,\ldots, a_k)= \mu_{k+1}^\otimes(\frac{\partial F}{\partial t}dt, a_1, \ldots, a_k),$$
for flat $a_i$.	
\end{proof}

We are now ready to define homotopy.

\begin{defi}\label{def:homotopy}
Two $L_\infty$ morphisms $(f^0, f^{0,\sharp})$ and $(f^1,f^{1,\sharp})$ from $(M,\mathfrak{g})$ to $(N,\mathfrak{h})$ are homotopic if there exists a map $F: M\times [0,1] \ra N$, together with an $L_\infty$ homomorphism
\[ F^\sharp:\pi_1^*\widetilde{\mathfrak{g}}\ra F^*\widetilde{\mathfrak{h}}_{[0,1]},\]
where $\pi_1: M\times [0,1] \ra M$ is the projection map, satisfying the following conditions:
\begin{itemize}
\item It's compatible with the connection, i.e. 
$$F^\sharp_{k+1} (X\cdot \alpha_1\cdots\alpha_k): = \nabla_XF^\sharp_k(\alpha_1\cdot\cdots\cdot\alpha_k)- \sum_{j=1}^k F^\sharp_k(\alpha_1\cdot\cdots\nabla_X\alpha_j\cdots\cdot\alpha_k)$$
and $F_1^\sharp(X)=dF(X), \; \forall X\in T_M$.
\item The following boundary conditions hold:
\begin{align*}
 (F,F^\sharp)|_{t=0} & =(f^0,f^{0,\sharp})\\
 (F,F^\sharp)|_{t=1} & =(f^1,f^{1,\sharp})
 \end{align*}
\end{itemize}
Note that by Lemma~\ref{lem:independence}, this definition is independent of the choice of $\nabla$. 
\end{defi}

Like usual we say a $L_\infty$ morphism $\mathfrak{f}: \mathbb{M}\to\mathbb{N}$ is a homotopy equivalence if there is another $L_\infty$ morphism $\mathfrak{e}: \mathbb{N}\to\mathbb{M}$ such that both $\mathfrak{f}\circ \mathfrak{e}$ and $\mathfrak{e}\circ \mathfrak{f}$ are homotopic to the identity $L_\infty$ morphism.

\begin{rmk}\label{rmk:analytic_homotopy}
The above definition can be easily adapted to the complex analytic setting when both $(M,\mathfrak{g})$ and $(N,\mathfrak{h})$ are holomorphic $L_\infty$ spaces such that the underlying complex manifolds $M$ and $N$ admits holomorphic torsion free and flat connections. More precisely, we simply require that $F$ and $F^\sharp$ be fiberwise holomorphic, i.e. they are smooth in the $t$ direction, and holomorphic whenever we fix a value $t\in [0,1]$. This is more transparent with explicit formulas of $F^\sharp$ in the following paragraph.
\end{rmk}

It will be helpful to unwind this definition. The compatibility condition implies the morphism $F^\sharp$ is determined by its values on the elements of $\pi_1^*{\mathfrak{g}}$. We write 
$$F^\sharp_k(a_1,\ldots, a_k)= f^t_k(a_1,\ldots, a_k)+ (-1)^{\sum_i |a_i|'}h^t_k(a_1,\ldots, a_k)dt.$$
 Then the $L_\infty$ morphism equation for $F^\sharp$ is equivalent to
\begin{enumerate}
 \item The maps $(f^t_k)_{k\geq 1}$ define an $L_\infty$ homomorphism $\mathfrak{g} \ra (f^t)^*\mathfrak{h}$;
 \item The maps $h^t_k$ satisfy the equations $h_1^t(\mu_0)= \frac{\partial F}{\partial t}$ and for $n\geq 1$
 \begin{align}\label{eq:homotopy} 
 \sum_k &\frac{1}{(k-1)!}\sum_{\sigma}\epsilon_\sigma  \mu^t_k\big(h^t_{i_1}(a_{\sigma(1)}\cdots) f^t_{i_2}\cdots f^t_{i_k}(\cdots a_{\sigma(n)})\big)+\nonumber\\
  & \sum_{j\geq 0}\sum_\tau \epsilon_\tau  h^t_{n-j+1}\big(\mu_j(a_{\tau(1)}\cdots a_{\tau(r)})\cdots a_{\tau(n)}\big) = \nabla_{\frac{d}{dt}} f^t_n(a_1,\ldots,a_n)
  \end{align}
 where $\sigma$ is a $(i_1,\cdots,i_k)$ type shuffle, $\tau$ is a $(r,n-r)$ type shuffle and the $a_i$ are flat.
\end{enumerate}

\begin{prop}\hfill
	
	\begin{itemize}
		 \item[\textbf{(a)}] Homotopy of $L_\infty$ morphisms is an equivalence relation.
		 \item[\textbf{(b)}] Let $(f^0, f^{0,\sharp})$ and $(f^1,f^{1,\sharp})$ be homotopic $L_\infty$ morphisms. Then $(f^0, f^{0,\sharp})\circ (d, d^\sharp), \  (f^1, f^{1,\sharp})\circ (d, d^\sharp)$ are homotopic and $(e, e^\sharp) \circ (f^0, f^{0,\sharp}), \  (e, e^\sharp) \circ (f^1,f^{1,\sharp})$ are homotopic, for any composable $L_\infty$ morphisms $(d, d^\sharp), (e, e^\sharp)$.
	\end{itemize}
\end{prop}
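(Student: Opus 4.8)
The plan is to phrase everything in terms of the unwound description following Definition~\ref{def:homotopy}: a homotopy is an $L_\infty$ morphism $F^\sharp:\pi_1^*\widetilde{\mathfrak{g}}\to F^*\widetilde{\mathfrak{h}}_{[0,1]}$, equivalently a family $(f^t_k,h^t_k)$ with $(f^t_k)$ an $L_\infty$ morphism for each $t$ and the $h^t_k$ subject to~\eqref{eq:homotopy}, together with the connection-compatibility clause and the prescribed boundary values at $t=0,1$. Each construction below produces such data, and I would verify in turn the $L_\infty$ morphism equations, the compatibility with $\nabla$, and the boundary conditions. Since the operations $\mu^\otimes_k$ are built from the commutative dg-algebra $\Omega_{[0,1]}^*$, the most efficient tool is the observation that a (piecewise) smooth reparametrization $\rho:[0,1]\to[0,1]$ sends a homotopy $F$ to the homotopy $F\circ(\id\times\rho)$ between $f^{\rho(0)}$ and $f^{\rho(1)}$; concretely the data becomes $\tilde f^t_k=f^{\rho(t)}_k$ and $\tilde h^t_k=\rho'(t)\,h^{\rho(t)}_k$, and a one-line check shows~\eqref{eq:homotopy} is preserved because it is linear in $h$, while $\tilde h_1^t(\mu_0)=\rho'(t)\,\partial F/\partial s|_{s=\rho(t)}=\partial(F\circ(\id\times\rho))/\partial t$.

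For part (a): reflexivity is witnessed by the constant homotopy $F=f\circ\pi_1$ with $h^t_k=0$, for which~\eqref{eq:homotopy} reduces to $0=0$ and the connection-compatibility is exactly that of the extended morphism $f^\sharp$. Symmetry follows from the reparametrization tool applied to $\rho(t)=1-t$, which swaps the endpoints. Transitivity is the one genuine construction: given homotopies $F$ (from $f^0$ to $f^1$) and $G$ (from $f^1$ to $f^2$), I would first use the reparametrization tool with a $\rho$ that is identically $0$ near $0$ and identically $1$ near $1$ to arrange that $F$ is constant equal to $f^1$ near $t=1$ and $G$ is constant equal to $f^1$ near $t=0$; then concatenate by $H(x,t)=F(x,2t)$ for $t\le\tfrac12$ and $H(x,t)=G(x,2t-1)$ for $t\ge\tfrac12$. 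The constancy near the junction makes the underlying map $H$ smooth and makes the glued family $(f^t_k,h^t_k)$ agree (trivially, both being constant) near $t=\tfrac12$, so the $L_\infty$ equations, which hold on each half, hold on all of $[0,1]$; the piecewise-polynomial model of $\Omega_{[0,1]}^*$ is exactly what guarantees the glued data is an admissible form.

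For part (b): in both cases the composite homotopy is obtained by composing the homotopy $L_\infty$ morphism with the (extended) morphism being glued on, via the composition formula~\eqref{eq:comp}. For left composition with $(e,e^\sharp):(N,\mathfrak{h})\to(P,\mathfrak{k})$ I would set $G=e\circ F$ and $G^\sharp=\big(F^*e^\sharp\otimes\id\big)\circ F^\sharp$, using that $e^\sharp$ tensored with the identity on $\Omega_{[0,1]}^*$ is an $L_\infty$ morphism; for right composition with $(d,d^\sharp):(M',\mathfrak{g}')\to(M,\mathfrak{g})$ I would set $G=F\circ(d\times\id)$ and $G^\sharp=(d\times\id)^*F^\sharp\circ\pi_1^*d^\sharp$. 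In either case the boundary conditions follow because evaluation at $t=0,1$ commutes with composition, so the restrictions are precisely $(e,e^\sharp)\circ(f^i,f^{i,\sharp})$ respectively $(f^i,f^{i,\sharp})\circ(d,d^\sharp)$.

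The main obstacle is the connection-compatibility clause of Definition~\ref{def:homotopy}, which must survive both the concatenation in (a) and the two compositions in (b). For this I would isolate the auxiliary fact that the extension-to-the-tangent-bundle construction~\eqref{eq:ext-mor} commutes with composition of $L_\infty$ morphisms, i.e.\ the extension of a composite is the composite of the extensions; this is an induction on the number of tangent-vector inputs, identical in spirit to the proofs of the extension lemmas and of the naturality statement in Lemma~\ref{lem:independence}. Granting this, the composed data in (b) automatically satisfies~\eqref{eq:ext-mor}, and the concatenated data in (a) does so because each half does and the two agree near the junction. The remaining points — the Koszul signs in the $h^t_k$ terms and the normalization $h_1^t(\mu_0)=\partial F/\partial t$ after reparametrization and gluing — are routine bookkeeping.
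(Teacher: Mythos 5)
Your proposal is correct and follows essentially the same route as the paper: reflexivity by the constant homotopy, symmetry by the reparametrization $\rho(t)=1-t$, transitivity by first reparametrizing with a $\rho$ constant near the endpoints and then concatenating, and part (b) by composing the homotopy with the induced $L_\infty$ morphism on the $\Omega^*_{[0,1]}$-tensored bundle (the paper writes this as $\widetilde{e}^\sharp\circ F^\sharp$ and treats the precomposition case as analogous). Your extra attention to the transformation $\tilde h^t_k=\rho'(t)h^{\rho(t)}_k$ and to the connection-compatibility clause only makes explicit what the paper leaves implicit.
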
	
\begin{proof}
 For \textbf{(a)} first note that a diffeomorphism $\rho:[0,1]\to [0,1]$ induces, by pull-back, a $L_\infty$ homomorphism $ \rho^*:  F^*\widetilde{\mathfrak{h}}_{[0,1]} \to (F_\rho)^*\widetilde{\mathfrak{h}}_{[0,1]}$, where $F_\rho:=F\circ(\id\times \rho)$. Now given a homotopy $(F, F^\sharp)$ from $(f^0, f^{0,\sharp})$ to $(f^1,f^{1,\sharp})$, take $\rho(t)=1-t$ and consider the pair $(F_\rho,  F^\sharp_\rho:=\rho^*\circ F^\sharp)$. This defines a homotopy from $(f^1,f^{1,\sharp})$ to $(f^0, f^{0,\sharp})$, which shows symmetry of the homotopy relation. For transitivity let $\rho$ be a non-decreasing diffeomorphism which is constant in neighborhoods of $0$ and $1$ in $[0,1]$. For this choice of $\rho$, $\mathfrak{F}_\rho:=(F_\rho,  F^\sharp_\rho)$ is a new homotopy  from $(f^0, f^{0,\sharp})$ to $(f^1,f^{1,\sharp})$. Given a homotopy $(G, G^\sharp)$ from $(f^1,f^{1,\sharp})$ to $(f^2,f^{2,\sharp})$ we consider $\mathfrak{G}_\rho$, as before and define the concatenation $\mathfrak{F}_\rho \bullet \mathfrak{G}_\rho$ by
 	\[  F_\rho \bullet G_\rho (x, t)= \left\{ {\begin{array}{ll}
 	F_\rho (x, 2t), & t\leq 1/2 \\
 	G_\rho (x,2t-1), & t \geq 1/2\\
 	\end{array} } \right.
 \]
 and analogously $F_\rho^\sharp \bullet G_\rho^\sharp$. By our choice of $\rho$ these are smooth maps and can be easily seen to determine a homotopy from $(f^0, f^{0,\sharp})$ to $(f^2, f^{2,\sharp})$.
 
 For \textbf{(b)} we prove only the second statement as they are analogous. Let $(F, F^\sharp)$ be a homotopy from $(f^0, f^{0,\sharp})$ to $(f^1,f^{1,\sharp})$ and $e^\sharp : \mathfrak{h}\to e^*\mathfrak{h}'$ be a $L_\infty$ homomorphism. It is easy to check there is an induced $L_\infty$ homomorphism $\widetilde{e}^\sharp : F^*\widetilde{\mathfrak{h}}_{[0,1]} \to (e\circ F)^*\widetilde{\mathfrak{h}'}_{[0,1]}$. Now the pair $(e\circ F, \widetilde{e}^\sharp\circ F^\sharp)$ defines the required homotopy from  $(e, e^\sharp) \circ (f^0, f^{0,\sharp})$ to  $(e, e^\sharp) \circ (f^1,f^{1,\sharp})$.
\end{proof}

\section{The inverse function theorem for $L_\infty$ spaces}~\label{sec:theorems}

In this section, we first adapt the obstruction theory of Section~\ref{sec:obstruction} to the case of $L_\infty$ spaces. Then we prove Theorem~\ref{thm:main} and Theorem~\ref{thm:main2}.

\subsection{ Obstruction theory for morphisms between $L_\infty$ spaces} Much of the discussion on the obstruction theory for $A_\infty$ and $L_\infty$ homomorphisms in Section 2 translates without significant changes to the $L_\infty$ space setting. When we are given two $L_\infty$ spaces $(M,\mathfrak{g})$, $(N, \mathfrak{h})$ and a smooth map $f: M \ra N$, we can define a differential $\delta$ on $\oplus_k \Hom({\sf sym}^k(\mathfrak{g}[1]), f^*\mathfrak{h}[1])$, as in Subsection~\ref{subsec:obs}, 
$$\delta (\phi_k) (a_1,\ldots,a_{k-1}) := (-1)^{|\phi_k|'}\phi_k(\mu_0,a_1,\ldots,a_{k-1}).$$
We denote by $D^k(\mathfrak{g},f^*\mathfrak{h})$ the $\delta$ cohomology and, assuming there is a map $f^\sharp_1$ satisfying $f^\sharp_1(\mu_0)=f^*\mu_0$, we define the differential 
$$d\phi(a_1,\ldots, a_k):= \mu_1\phi(a_1,\ldots, a_k)-(-1)^{|\phi|'+|a_i|'\sum_{l=1}^{i-1}|a_l|'}\phi(\mu_1(a_i),a_1\ldots,\widehat{a_i},\ldots, a_k),$$
as before $\mu_1$ in the first term really stands for $f^*\mu_1$.
As in Definition \ref{defi:obs}, we define the obstruction space $$H^k(\mathfrak{g},f^*\mathfrak{h}) := H^1\big( D^k(\mathfrak{g},f^*\mathfrak{h}), d\big).$$

We also define a sequence of maps $(f^\sharp_1,\ldots,f^\sharp_n): \mathfrak{g} \to f^*\mathfrak{h}$ (together with $f$) to be an $L_{(n)}$ morphism, if it satisfies the $L_\infty$ homomorphism equation for $0\leq k\leq n-1$ inputs and for $n$ inputs up to a $\delta$-exact  term. We define, in the same way as in Section~\ref{sec:obstruction}, an obstruction class
\[ \mathfrak{o}\big( (f^\sharp_j)_{j=1}^n \big)\in H^{n+1}(\mathfrak{g},f^*\mathfrak{h}).\]
This obstruction class vanishes if and only if the map can be lifted to a $L_{(n+1)}$ morphism.

\begin{lem}
The obstruction class $\mathfrak{o}\big( (f^\sharp_j)_{j=1}^n\big)\in H^{n+1}(\mathfrak{g},f^*\mathfrak{h}) $ vanishes if and only if the corresponding class (in the extended $L_\infty$ algebras) $\mathfrak{o}\big( (f^\sharp_j)_{j=1}^n \big) \in H^{n+1}( \widetilde{\mathfrak{g}}, f^*\widetilde{\mathfrak{h}})$ vanishes.
\end{lem}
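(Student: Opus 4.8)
The plan is to compare, at the cochain level, the obstruction complex of $(\mathfrak{g},f^*\mathfrak{h})$ with that of the extended pair $(\widetilde{\mathfrak{g}},f^*\widetilde{\mathfrak{h}})$, and to show the two obstruction cochains correspond under this comparison. Write $\widetilde{\mathfrak{g}}[1]=T_M[1]\oplus\mathfrak{g}[1]$ and $f^*\widetilde{\mathfrak{h}}[1]=f^*T_N[1]\oplus f^*\mathfrak{h}[1]$. First I would introduce an extension map
$$E:\Hom({\sf sym}^k(\mathfrak{g}[1]),f^*\mathfrak{h}[1])\ra \Hom({\sf sym}^k(\widetilde{\mathfrak{g}}[1]),f^*\widetilde{\mathfrak{h}}[1])$$
sending a cochain $\phi$ to its connection extension $E(\phi)$, defined by the same recursion as in Equation~\eqref{eq:ext-mor}: one extracts each tangent input $X\in T_M$ via $\nabla_X$. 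As in the lemmas on extended structures, the torsion-free flat hypothesis on $\nabla$ guarantees that the result is independent of the order in which tangent vectors are pulled out, so $E$ is well defined; moreover, since $\nabla$ preserves $f^*\mathfrak{h}$, each $E(\phi)$ takes values in $f^*\mathfrak{h}[1]$. In the opposite direction I use the restriction map $R$ that restricts a cochain to inputs in $\mathfrak{g}[1]$ and projects its output to $f^*\mathfrak{h}[1]$; by construction $R\circ E=\id$.

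Next I would check that $E$ intertwines the two differentials and hence descends to a map $E_*:H^{n+1}(\mathfrak{g},f^*\mathfrak{h})\ra H^{n+1}(\widetilde{\mathfrak{g}},f^*\widetilde{\mathfrak{h}})$. For $\delta$ this is immediate: the curvature $\mu_0$ is unchanged by the extension and lies in $\mathfrak{g}_2[1]\subset\widetilde{\mathfrak{g}}[1]$, so $E\delta=\delta E$ and $E$ descends to $D^k$. For $d$ the point is that the extended operator $\mu_1^{\widetilde{\mathfrak{g}}}$ is itself produced by the connection recursion (in particular $\mu_1^{\widetilde{\mathfrak{g}}}|_{T_M}=\nabla_{(-)}\mu_0=\d\mu_0$ and $\mu_1^{\widetilde{\mathfrak{g}}}|_{\mathfrak{g}}=\mu_1^{\mathfrak{g}}$), and likewise on the target. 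Commuting $E$ past $d$ is then the same inductive computation on the number of tangent inputs used in the proof that the maps of Equation~\eqref{eq:ext-mor} form an $L_\infty$ morphism: extracting a tangent vector from $d^{\widetilde{\mathfrak{g}}}E(\phi)$ reproduces, through the Leibniz rule for $\nabla$, the extension of $d^{\mathfrak{g}}\phi$. Dually, $R$ is a chain map on the image of $E$, since the only discrepancy in $Rd-dR$ comes from the $f^*T_N$-component of the output, which vanishes for connection extensions. Thus $R\circ E=\id$ descends to cohomology and $E_*$ is (split) injective.

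It then remains to identify the obstruction cochains, and I claim ${\sf obs}_{n+1}^{\widetilde{\mathfrak{g}}}=E({\sf obs}_{n+1}^{\mathfrak{g}})$ modulo $\delta$- and $d$-exact terms. Indeed, the extended morphism components are the extensions $f^\sharp_j=E(f^\sharp_j)$ of the given $L_{(n)}$-morphism (the sole exception $f^\sharp_1|_{T_M}=\d f$ being incorporated into the recursion itself), and the extended operations $\mu_k$ are the connection extensions of the $\mu_k$ on $\mathfrak{g},\mathfrak{h}$; choosing the primitive on the $\widetilde{\mathfrak{g}}$ side to be the extension $E(f'_{n+1})$ of the chosen $\mathfrak{g}$-side primitive, every term of ${\sf obs}_{n+1}^{\widetilde{\mathfrak{g}}}$ arises from the corresponding term of ${\sf obs}_{n+1}^{\mathfrak{g}}$ by composing connection-extended maps. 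Since a composite of connection-compatible maps is again connection-compatible — the same product-rule induction as above, and as in Lemma~\ref{lem:independence} — the whole expression is the extension of ${\sf obs}_{n+1}^{\mathfrak{g}}$, as claimed.

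With these pieces in place the lemma follows formally: ${\sf obs}_{n+1}^{\widetilde{\mathfrak{g}}}$ represents $E_*\big(\mathfrak{o}((f^\sharp_j)_{j=1}^n)\big)$ in $H^{n+1}(\widetilde{\mathfrak{g}},f^*\widetilde{\mathfrak{h}})$, and since $E_*$ is injective this class vanishes if and only if $\mathfrak{o}((f^\sharp_j)_{j=1}^n)\in H^{n+1}(\mathfrak{g},f^*\mathfrak{h})$ vanishes. I expect the main obstacle to be the verification in the second and third paragraphs, namely that $E$ commutes with $d$ and that composition preserves connection-compatibility; both require the careful sign-and-order bookkeeping for extracting several tangent vectors, justified as before by the torsion-free flat hypothesis on $\nabla$. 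Once this naturality of the extension with respect to all the $L_\infty$ operations is established, the correspondence of obstruction classes and the final conclusion are purely formal.
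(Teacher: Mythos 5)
Your overall strategy --- relate the two obstruction classes through a connection--induced extension and a restriction --- is in the same spirit as the paper, but the cochain-level implementation has concrete problems. First, the map $E$ is not actually defined by the recursion of Equation~\eqref{eq:ext-mor} on a \emph{single} cochain of fixed arity: that recursion expresses the value of an arity-$(k+1)$ map on inputs containing a tangent vector in terms of an arity-$k$ map, so it extends a whole tower $(\phi_k)_k$ (such as $(f^\sharp_1,\dots,f^\sharp_n,f'_{n+1})$ or the family $(\mu_k)$), not an individual element of $\Hom({\sf sym}^{n+1}\mathfrak{g}[1],f^*\mathfrak{h}[1])$. In particular ``$E({\sf obs}_{n+1})$'' and ``$E(f'_{n+1})$'' are not well defined as arity-$(n+1)$ cochains without further input. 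Second, the claim that $E\delta=\delta E$ is ``immediate'' is not correct: inserting $\mu_0$ and then extracting a tangent vector $X$ differs from extracting $X$ and then inserting $\mu_0$ by the term $\phi(\nabla_X\mu_0,\dots)$, which is exactly $\phi(\mu_1^{\widetilde{\mathfrak{g}}}(X),\dots)$ and is nonzero in general; so $E$ does not commute with $\delta$ on the nose and does not obviously descend to $D^{n+1}$. Third, to conclude that $E_*$ is injective you need $R_*E_*=\id$ on cohomology, i.e.\ you must apply $R$ to a primitive $\beta$ of $E({\sf obs}_{n+1})$, and such a $\beta$ need not lie in the image of $E$; the statement that $R$ is a chain map ``on the image of $E$'' is therefore not enough. (This last point can in fact be rescued by a degree count --- a degree-zero arity-$(n+1)$ cochain with all inputs in $\mathfrak{g}[1]$ cannot have output in $f^*T_N[1]$, which kills the discrepancy $Rd-dR$ on the relevant primitives --- but that argument is missing from your write-up.)

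The paper avoids all of this by never comparing cocycles: it uses the already-established equivalence ``the obstruction class vanishes if and only if the $L_{(n)}$ morphism lifts to an $L_{(n+1)}$ morphism.'' If a lift $f^\sharp_{n+1}$ exists on $\mathfrak{g}$, the recursion \eqref{eq:ext-mor} applied to the tower $(f^\sharp_1,\dots,f^\sharp_{n+1})$ produces an $L_{(n+1)}$ morphism of the extended algebras, so the extended class vanishes; conversely a lift on $\widetilde{\mathfrak{g}}$ restricts to one on $\mathfrak{g}$ (no tangent components can appear in the restricted equations, again for degree reasons). I would recommend either switching to this lift-based argument, or, if you want the stronger cocycle-level statement that the classes themselves correspond, repairing the three points above --- in particular giving an honest definition of $E$ and proving, rather than asserting, its compatibility with $\delta$ and $d$.
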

\begin{proof}
If there exists $f^\sharp_{n+1}: {\sf sym}^{n+1}(\mathfrak{g}[1])\ra f^*\mathfrak{h}[1]$, we may extend it using Equation~\eqref{eq:ext-mor} to obtain an $L_{(n+1)}$ homomorphism on the extended $L_\infty$ algebras, which implies that $\mathfrak{o}\big( (f^\sharp_j)_{j=1}^n \big) \in H^{n+1}( \widetilde{\mathfrak{g}}, f^*\widetilde{\mathfrak{h}})$ vanish. Conversely, if the latter obstruction class vanishes, we simply restrict the map $f^\sharp_{n+1}$ to ${\sf sym}^{n+1}(\mathfrak{g}[1])\ra f^*\mathfrak{h}[1]$.
\end{proof}

Extra work is needed to formulate the homotopy invariance of obstruction spaces and classes. Let $(F, F^\sharp)$ be a homotopy between two $L_{(1)}$ morphisms $(f^0,f^{0,\sharp})$ and $(f^1,f^{1,\sharp})$. Denote by $\iota_a: M \ra M \times [0,1]$ the inclusion map $\iota_a(x)=(x,a)$.  For an element $\varphi \in \Hom({\sf sym}^{n+1}\pi_1^*\widetilde{\mathfrak{g}}[1], F^*\widetilde{\mathfrak{h}}_{[0,1]}[1])$, we have $\iota_a^*(\varphi) \in \Hom({\sf sym}^{n+1}\widetilde{\mathfrak{g}}[1], (f^a)^*\widetilde{\mathfrak{h}})$, for $a=0,1$. 
It is easy to see this assignment induces a map on obstruction spaces $\ev_a: H^{n+1}(\pi_1^*\widetilde{\mathfrak{g}}, F^*\widetilde{\mathfrak{h}}_{[0,1]}) \ra H^{n+1}(\widetilde{\mathfrak{g}}, (f^a)^*\widetilde{\mathfrak{h}})$, which we call the evaluation map.

\begin{prop}\label{prop:ev_iso}
The evaluation map
\[ {\sf ev}_a: H^{n+1}(\pi_1^*\widetilde{\mathfrak{g}}, F^*\widetilde{\mathfrak{h}}_{[0,1]}) \ra H^{n+1}(\widetilde{\mathfrak{g}}, (f^a)^*\widetilde{\mathfrak{h}})\]
is an isomorphism.
\end{prop}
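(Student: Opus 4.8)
The plan is to prove that $\ev_a$ is an isomorphism by exhibiting a homotopy-invariance principle at the level of the obstruction complexes, modeled on the analogous statement for $A_\infty$ algebras (the observation recorded just before Lemma~\ref{lem:a1homo} that every class in $H^{n+1}(A, B\otimes\Omega^*_{[0,1]})$ is represented by an element of the form $\phi\otimes\bone$). Concretely, I would first unwind the target complex $\bigl(D^{\bullet}(\pi_1^*\widetilde{\mathfrak{g}}, F^*\widetilde{\mathfrak{h}}_{[0,1]}), d\bigr)$: since $F^*\widetilde{\mathfrak{h}}_{[0,1]} = F^*\widetilde{\mathfrak{h}}\otimes \pi_2^*\Omega^*_{[0,1]}$ and $\Omega^*_{[0,1]}$ is a two-term complex $\Omega^0 \xrightarrow{d} \Omega^1\,dt$, each $\Hom$-space $\Hom({\sf sym}^{n+1}\pi_1^*\widetilde{\mathfrak{g}}[1], F^*\widetilde{\mathfrak{h}}_{[0,1]}[1])$ decomposes into a $dt$-free part and a $dt$-part. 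I would check that the operators $\delta^\otimes$ and $d^\otimes$ built from $\mu_0^\otimes, \mu_1^\otimes$ respect (up to the extra $\nabla_{d/dt}$ term in $\mu_1^\otimes$) this bigrading, so that the whole obstruction complex is, as a bicomplex over the base, the tensor product of the ``constant-coefficient'' obstruction complex $\bigl(D^\bullet(\widetilde{\mathfrak{g}}, (f^a)^*\widetilde{\mathfrak{h}}), d\bigr)$ with the de Rham complex of the interval.

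Next I would run a de Rham / Poincaré-lemma argument fiberwise in $t$. Since $\Omega^*_{[0,1]}$ has cohomology $\R$ concentrated in degree $0$ (generated by the constant function $\bone$), the key algebraic input is that tensoring with the acyclic part of $\Omega^*_{[0,1]}$ contributes nothing to cohomology. The correct formalization uses the integration operator $\int_0^t: \Omega^1_{[0,1]}\to \Omega^0_{[0,1]}$ as a contracting homotopy, together with the fact that $\mu_1^{\otimes}$ differs from the ``naive'' differential $\mu_1^t$ only by the $\nabla_{d/dt}$-term, which is exactly the de Rham differential in the $t$-direction. I would show that any $d^\otimes$-cocycle $\varphi$, after subtracting a $d^\otimes$-coboundary and a $\delta^\otimes$-coboundary, can be put in the normal form $\phi\otimes \bone$ with $\phi$ independent of $t$ and $dt$-free; this simultaneously establishes surjectivity of $\ev_a$ (any class downstairs is the image of such a $\phi\otimes\bone$) and injectivity (a class whose restriction at $t=a$ vanishes can, in this normal form, be trivialized upstairs). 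Because $\ev_0$ and $\ev_1$ agree on $\phi\otimes\bone$, the resulting map is independent of $a$, which also reproves the fourth bullet point about homotopic $L_{(n)}$ morphisms having equal obstruction classes.

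The main obstacle I anticipate is controlling the $t$-dependence of the connection terms: unlike the purely algebraic $A_\infty$ setting, here $\mu_1^\otimes$ contains the covariant derivative $\nabla_{d/dt}$, and the operations $\mu_k^t = (f^t)^*\mu_k$ genuinely vary with $t$ through the pullback along the smooth family $f^t$. I would need to verify that the contracting homotopy built from $\int_0^t$ still satisfies the required identities when interleaved with these $t$-dependent operators — in effect, that parallel transport along $t$ can be used to trivialize the family, reducing to the constant case. Making this compatible with the side condition coming from Definition~\ref{def:homotopy} (that $F^\sharp$ is determined by its values on $\pi_1^*\mathfrak{g}$ via the connection) is the delicate point; once the normal-form reduction is established, the isomorphism statement follows formally, and the fact that the construction is natural in $a$ gives the equality $\ev_0 = \ev_1$ on cohomology for free.
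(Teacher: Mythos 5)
Your overall strategy coincides with the paper's: a section of $\ev_a$ given by parallel transport in the $t$-direction (the paper's map $\textsf{i}$, the ``flat extension''), plus a contracting homotopy $K$ built from $\int_0^t$ applied to the $dt$-component, yielding $\textsf{i}\circ\ev_a-\id=dK+Kd$ up to correction. But the step you defer --- ``I would need to verify that the contracting homotopy built from $\int_0^t$ still satisfies the required identities when interleaved with these $t$-dependent operators'' --- is exactly where the substance of the proof lies, and it is neither routine nor a matter of ``trivializing the family by parallel transport, reducing to the constant case.'' The family cannot be trivialized: $\mu_1^t=(f^t)^*\mu_1$ genuinely varies with $t$, so neither $\textsf{i}$ nor $K$ commutes with $d$; and since the obstruction space is $d$-cohomology computed on $\delta$-cohomology, you must show the commutator defects are $\delta$-exact. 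The identity that makes this work is not the one you cite (that $F^\sharp$ is determined by its values on $\pi_1^*\mathfrak{g}$ via the connection) but the curvature condition $h_1^t(\mu_0)=\partial F/\partial t$ from the unwinding of Definition~\ref{def:homotopy}: one computes $\nabla_{d/ds}\,\mu_1^s\circ\textsf{i}\varphi=\mu_2^s(\partial F/\partial s,\textsf{i}\varphi)=\mu_2^s\bigl(h_1^s(\mu_0),\textsf{i}\varphi\bigr)=\delta\bigl(\mu_2^s(h_1^s\cdot\textsf{i}\varphi)\bigr)$, and the homotopy identity for $K$ likewise closes only up to the $\delta$-exact term $\delta\bigl(\int_0^t\mu_2^s(h_1^s,\int_0^s\psi^u\,du)\,dt\bigr)$. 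Without invoking $h_1^t$ the argument does not close, so this is a genuine gap rather than a bookkeeping detail.

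A secondary issue: the claim that the obstruction complex is, as a bicomplex, the tensor product of the constant-coefficient complex with $\Omega^*_{[0,1]}$ is false for the same reason ($\mu_k^t$ depends on $t$), and the normal form $\phi\otimes\bone$ with $\phi$ literally independent of $t$ is not available in general. What the paper's $\textsf{i}$ produces is the $\nabla$-flat extension of a representative at $t=a$, which is ``constant'' only in a chosen flat frame of $F^*\widetilde{\mathfrak{h}}$ and is a $d$-cocycle only modulo the $\delta$-exact correction above. Once you insert the identity $h_1^t(\mu_0)=\partial F/\partial t$ at both places --- well-definedness of $\textsf{i}$ on obstruction spaces and the homotopy identity for $K$ --- your outline becomes the paper's proof.
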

\begin{proof}
Both cases are identical, we will prove the statement for $a=0$, by constructing $\sf{i}$ a homotopy inverse to $\ev_0$. 
Given $\varphi \in \Hom({\sf sym}^{n+1}\widetilde{\mathfrak{g}}[1], (f^0)^*\widetilde{\mathfrak{h}})$, we define $\sf{i}\varphi (a_1,\ldots, a_k)$ by taking the (unique) flat extension of $\varphi (a_1,\ldots, a_k)$ in the $t$-direction  and so obtain an element of $F^*\widetilde{\mathfrak{h}}$. It is clear $\sf{i}$ commutes with the $\delta$ differential and hence it induces a map from $D^k(\widetilde{\mathfrak{g}}, (f^0)^*\widetilde{\mathfrak{h}})$ to  $D^k(\pi_1^*\widetilde{\mathfrak{g}},F^*\widetilde{\mathfrak{h}}_{[0,1]})$. We pick a flat frame of the bundle $F^*\widetilde{\mathfrak{h}}$ and compute, for a $\delta$-closed $\varphi$,
 \begin{align*}
d(\textsf{i}(\varphi))- \textsf{i}( d(\varphi)) & = (\mu^t_1- \mu^0_1)\circ\textsf{i}\varphi = \int_0^t \nabla_{d/ds} \mu^s_1\circ\textsf{i}\varphi\, ds\\
& = \int_0^t \mu^s_2(\frac{\partial F}{\partial s}, \textsf{i}\varphi) \,ds = \int_0^t \mu^s_2(h^s_1(\mu_0), \textsf{i}\varphi) \,ds\\
& = \delta \big( \int_0^t \mu^s_2(h^{s}_1 \cdot \textsf{i}\varphi)\, ds\big),
\end{align*}
Here $h_1^t$ is the map coming from the definition of homotopy in (\ref{eq:homotopy}). Hence $\sf i$ induces a map between the corresponding obstruction spaces.
Clearly, we have $\ev_0\circ \textsf{i}=\id$. Let
\[K: D^k(\pi_1^*\widetilde{\mathfrak{g}},F^*\widetilde{\mathfrak{h}}_{[0,1]}) \to D^k(\pi_1^*\widetilde{\mathfrak{g}},F^*\widetilde{\mathfrak{h}}_{[0,1]})
\]
be the map induced by the integration map $$K(\varphi^t +\psi^t dt)(a_1,\ldots,a_k)= (-1)^{|\psi|'+\star}\int_0^t \psi^s(a_1,\ldots,a_k)\,ds,$$
where $\star:=|a_1|'+\ldots +|a_k|'$. We claim that $\textsf{i}\circ {\ev_0}-\id= d K+ K d$. For $\phi:= \varphi^t +\psi^t dt$ we compute (omitting the inputs)
\begin{align*}
(d K +K d)(\phi)  =  & \ (-1)^{|\psi|'+\star}\mu_1^t\int_0^t \psi^s\, ds- \nabla_{\frac{d}{dt}} \int_0^t \psi^s\,ds\,dt - (-1)^{\star+1}\int_0^t\psi^s \widetilde{\mu_1}\,ds\\
& - \int_0^t \nabla_{\frac{d}{ds}}\varphi^s\,ds + (-1)^{|\psi|'+\star+1}\int_0^t\mu_1^s \psi^s\,ds - (-1)^{\star}\int_0^t \psi^s \widetilde{\mu_1}\,ds\\
= & \ (-1)^{|\psi|'+\star}\left(\int_0^t \nabla_{\frac{d}{ds}}\mu_1^sK(\psi^s ds)\,dt-\int_0^t \mu_1^s(\nabla_{\frac{d}{ds}}K(\psi^s ds))\,dt\right)\\
 & \ -\psi^tdt - \varphi^t + \varphi^0 \\
= & \  \textsf{i}\ev_0(\phi) - \phi +(-1)^{|\psi|'+\star} \int_0^t \mu_2^s(\frac{ \partial F}{\partial s} \cdot K(\psi^s ds))\,dt\\
= & \ \textsf{i}\ev_0(\phi)-\phi  - (-1)^{|\psi|'+\star}\delta \big( \int_0^t \mu_2^s(h_1^s, \int_0^s \psi^u \,du )\,dt\big).
\end{align*}
In the last equality we have used the fact that $h_1^s(\mu_0)=\frac{ \partial F}{\partial s}$.
Thus, we conclude that $\textsf{i}\circ \ev_0= \id_{H^{n+1}(\pi_1^*\widetilde{\mathfrak{g}}, F^*\widetilde{\mathfrak{h}}_{[0,1]})}$ and therefore $\ev_0$ is an isomorphism.
 \end{proof}

\begin{cor}
	Let $(f^0,f^{0,\sharp})$ and $(f^1,f^{1,\sharp})$ be two homotopic $L_{(n)}$ morphisms. Then $(f^0,f^{0,\sharp})$ lifts to a $L_{(n+1)}$ morphism if and only if $(f^1,f^{1,\sharp})$ does.
\end{cor}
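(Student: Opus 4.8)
The plan is to reduce the statement to the fact, already established in Proposition~\ref{prop:ev_iso}, that the two evaluation maps $\ev_0$ and $\ev_1$ are isomorphisms, combined with the naturality of the obstruction class. Recall the lifting criterion: an $L_{(n)}$ morphism lifts to an $L_{(n+1)}$ morphism precisely when its obstruction class vanishes. Thus it suffices to prove that $\mathfrak{o}\big( (f^{0,\sharp}_j)_{j=1}^n\big)$ vanishes if and only if $\mathfrak{o}\big( (f^{1,\sharp}_j)_{j=1}^n\big)$ does, and to do this I would pass through a single obstruction class attached to the homotopy itself.

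First I would extract that class. By hypothesis there is a homotopy $(F,F^\sharp)$, and its component sequence $(F^\sharp_j)_{j=1}^n$ is an $L_{(n)}$ morphism from $\pi_1^*\widetilde{\mathfrak{g}}$ to $F^*\widetilde{\mathfrak{h}}_{[0,1]}$. Applying the obstruction construction of Section~\ref{sec:obstruction} to it yields
\[ \mathfrak{o}\big( (F^\sharp_j)_{j=1}^n\big)\in H^{n+1}(\pi_1^*\widetilde{\mathfrak{g}}, F^*\widetilde{\mathfrak{h}}_{[0,1]}).\]
Next I would verify the naturality identity
\[ \ev_a\Big( \mathfrak{o}\big( (F^\sharp_j)_{j=1}^n\big)\Big) = \mathfrak{o}\big( (f^{a,\sharp}_j)_{j=1}^n\big), \qquad a=0,1.\]
This is the $L_\infty$ space analogue of the compatibility of obstruction classes with pushforward recorded for $A_\infty$ algebras in Subsection~\ref{subsec:Ahomotopy}. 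Concretely, $\ev_a$ is induced by the restriction $\iota_a^*$ to $t=a$, which commutes with the structure maps in the sense that $\iota_a^*\mu_k^{\otimes} = \mu_k^a\,\iota_a^*$ on the part with no $dt$-factor. Since $f^{a,\sharp}_j=\iota_a^* F^\sharp_j$ by the boundary conditions of Definition~\ref{def:homotopy}, and since the auxiliary lift $f'_{n+1}$ entering the definition of the obstruction can be chosen compatibly by restricting a fixed lift of $F^\sharp$, the representative ${\sf obs}_{n+1}$ for $F$ restricts on $t=a$ to a representative of ${\sf obs}_{n+1}$ for $f^a$.

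Finally I would combine these. Because $\ev_0$ and $\ev_1$ are isomorphisms by Proposition~\ref{prop:ev_iso}, the class $\mathfrak{o}\big( (F^\sharp_j)_{j=1}^n\big)$ vanishes if and only if each of $\mathfrak{o}\big( (f^{0,\sharp}_j)_{j=1}^n\big)$ and $\mathfrak{o}\big( (f^{1,\sharp}_j)_{j=1}^n\big)$ vanishes; in particular one vanishes exactly when the other does. Translating back through the lifting criterion gives the corollary.

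The main obstacle I anticipate is the naturality identity in the second step. One must check compatibility of the restriction maps $\iota_a^*$ with the passage to obstruction classes at the cochain level, paying attention to the $dt$-components that are annihilated by $\iota_a^*$ and to the freedom in the choice of auxiliary lift $f'_{n+1}$. This is where the structure of $F^*\widetilde{\mathfrak{h}}_{[0,1]}$ and the interaction of the operations $\mu_k^{\otimes}$ with $\iota_a^*$ must be used carefully; once the definitions are unwound it should be a formal bookkeeping argument, entirely parallel to the $A_\infty$ case.
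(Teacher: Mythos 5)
Your proposal is correct and follows exactly the paper's argument: attach an obstruction class to the homotopy $(F,F^\sharp)$, use the naturality identity $\ev_a\big(\mathfrak{o}((F^\sharp_j)_{j=1}^n)\big)=\mathfrak{o}\big((f^{a,\sharp}_j)_{j=1}^n\big)$, and conclude via Proposition~\ref{prop:ev_iso} that the two obstruction classes vanish simultaneously. The only difference is that you spell out the cochain-level verification of the naturality identity, which the paper leaves as ``we can easily see''.
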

\begin{proof}
	The morphism $(f^a,f^{a,\sharp})$ lifts to a $L_{(n+1)}$ morphism if and only if $\mathfrak{o}\big( (f^{a,\sharp}_j)_{j=1}^n\big)$ vanishes. Let $(F,F^{\sharp})$ be the homotopy between the two $L_{(n)}$ morphisms. We can easily see
	\[\mathfrak{o}\big( (f^{a,\sharp}_j)_{j=1}^n\big)=\ev_a\Big( \mathfrak{o}\big( (F^{\sharp}_j)_{j=1}^n\big)\Big).\]
	By the previous proposition, $\ev_a$ is an isomorphism therefore $\mathfrak{o}\big( (f^{a,\sharp}_j)_{j=1}^n\big)$ vanishes if and only if  $\mathfrak{o}\big( (F^{\sharp}_j)_{j=1}^n\big)$ does.
\end{proof}

The push-forward $(\mathfrak{e})_*$ and pull-back $(\mathfrak{d})^* $ maps on the obstruction space, under a $L_{(1)}$ morphism, are defined in the same manner as in the algebra case. We have the analogue to Lemma \ref{lem:hom_inv}.

\begin{lem}
	Let $\mathfrak{e}=(e,e^\sharp_1):(N,\mathfrak{h}) \to (N',\mathfrak{h}')$ and $\mathfrak{d}=(d,d^\sharp_1):(M',\mathfrak{g}') \to (M,\mathfrak{g})$ be $L_{(1)}$ homotopy equivalences. Then both maps
	\begin{align*}
	 (\mathfrak{e})_*: H^{n}(\mathfrak{g},f^*\mathfrak{h}) \to H^{n}(\mathfrak{g},(e\circ f)^*\mathfrak{h}'),\\
	 (\mathfrak{d})^*: H^{n}(\mathfrak{g},f^*\mathfrak{h}) \to H^{n}(\mathfrak{g}',(f\circ d)^*\mathfrak{h})
	 \end{align*}
	are isomorphisms.
\end{lem}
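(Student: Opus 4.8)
The plan is to follow the strategy of Lemma~\ref{lem:hom_inv}, reducing the statement to two formal properties of the push-forward and pull-back maps: strict functoriality with respect to composition of $L_{(1)}$ morphisms, and invariance under $L_{(1)}$ homotopy. Granting these, if $\mathfrak{e}'$ denotes an $L_{(1)}$ homotopy inverse of $\mathfrak{e}$, then $(\mathfrak{e})_*\circ(\mathfrak{e}')_*=(\mathfrak{e}\circ\mathfrak{e}')_*$ and $(\mathfrak{e}')_*\circ(\mathfrak{e})_*=(\mathfrak{e}'\circ\mathfrak{e})_*$; since $\mathfrak{e}\circ\mathfrak{e}'\cong\id$ and $\mathfrak{e}'\circ\mathfrak{e}\cong\id$, homotopy invariance identifies each composite with the corresponding identity map, so $(\mathfrak{e})_*$ is an isomorphism with inverse $(\mathfrak{e}')_*$. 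The argument for $(\mathfrak{d})^*$ is identical with the arrows reversed.

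Functoriality is the routine step. From the definition $(\mathfrak{e})_*[\phi]=[e^\sharp_1\circ\phi]$ together with the first component of the composition formula~\eqref{eq:comp}, one has $(e_2\circ e_1)^\sharp_1=(e_2)^\sharp_1\circ(e_1)^\sharp_1$ after pulling back along the relevant smooth maps; since $\delta$ and $d$ commute with post-composition by a fixed linear $L_{(1)}$ map, this descends to the obstruction spaces. The only bookkeeping is to track the pullback bundles, so that $(\mathfrak{e}_2\circ\mathfrak{e}_1)_*$ and $(\mathfrak{e}_2)_*\circ(\mathfrak{e}_1)_*$ share the target $H^n(\mathfrak{g},(e_2\circ e_1\circ f)^*\mathfrak{h}'')$ by functoriality of pullback of bundles. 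The pull-back case is the same, precomposing with the $n$-fold tensor $(d^\sharp_1)^{\otimes n}$ and using associativity.

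The substantive step, and the main obstacle, is homotopy invariance, and here the essential complication absent from the algebra case is that homotopic morphisms $\mathfrak{e}_0,\mathfrak{e}_1$ have different underlying smooth maps $e_0,e_1$, so their push-forwards a priori take values in the distinct obstruction spaces $H^n(\mathfrak{g},(e_0\circ f)^*\mathfrak{h}')$ and $H^n(\mathfrak{g},(e_1\circ f)^*\mathfrak{h}')$. The plan is to resolve this through Proposition~\ref{prop:ev_iso}. Given a homotopy $(F,F^\sharp)$ from $\mathfrak{e}_0$ to $\mathfrak{e}_1$ with $F\colon N\times[0,1]\to N'$, I would pull back along $f$ to form $\bar F:=F\circ(f\times\id)\colon M\times[0,1]\to N'$ and the induced push-forward $(F^\sharp)_*$ into the thickened space $H^n(\pi_1^*\widetilde{\mathfrak{g}},\bar F^*\widetilde{\mathfrak{h}'}_{[0,1]})$. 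The compatibility to verify is $\ev_a\circ(F^\sharp)_*=(\mathfrak{e}_a)_*$ for $a=0,1$, where $\ev_a$ are the evaluation isomorphisms of Proposition~\ref{prop:ev_iso}; this exhibits $(\mathfrak{e}_0)_*$ and $(\mathfrak{e}_1)_*$ as equal after the canonical identification $\ev_1\circ\ev_0^{-1}$ of their targets.

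Two technical points remain. First, Proposition~\ref{prop:ev_iso} is stated for the extended algebras $\widetilde{\mathfrak{g}},\widetilde{\mathfrak{h}}$, whereas the lemma concerns the unextended obstruction spaces; I would either carry the whole argument at the extended level and transport the conclusion back along the comparison already used in this subsection, or observe that the explicit homotopy inverse and the contracting homotopy $K$ built in the proof of Proposition~\ref{prop:ev_iso} apply verbatim to the unextended complexes. Second, the pull-back statement requires the source-variable analogue of Proposition~\ref{prop:ev_iso}, in which the first argument varies along a homotopy $M'\times[0,1]\to M$; this is proved by the same flat-extension and integration argument, precomposing with $(d^\sharp_1)^{\otimes n}$ in place of post-composing. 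Verifying the compatibility $\ev_a\circ(F^\sharp)_*=(\mathfrak{e}_a)_*$, which amounts to unwinding the definition of $(F^\sharp)_*$ against the restriction maps $\iota_a^*$, is where the bulk of the work lies; once it is in place, the formal argument of the first paragraph closes the proof.
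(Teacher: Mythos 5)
Your proposal is correct and follows essentially the same route as the paper: functoriality of $(\,\cdot\,)_*$ and $(\,\cdot\,)^*$ under composition, plus homotopy invariance deduced from the compatibility $\ev_a\circ(\mathfrak{E})_*=(\mathfrak{e}^a)_*$ together with Proposition~\ref{prop:ev_iso}, applied to the composites $\mathfrak{e}\circ\bar{\mathfrak{e}}$ and $\bar{\mathfrak{e}}\circ\mathfrak{e}$. The paper sidesteps your concern about the differing targets of $(\mathfrak{e}^0)_*$ and $(\mathfrak{e}^1)_*$ by only extracting the statement that one is an isomorphism if and only if the other is (rather than identifying them via $\ev_1\circ\ev_0^{-1}$), which suffices for the conclusion.
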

\begin{proof}
	Let $\mathfrak{E}$ be a $L_{(1)}$ homotopy between two $L_{(1)}$ morphisms $\mathfrak{e}^0$ and $\mathfrak{e}^1$. Observe that $\ev_a \circ(\mathfrak{E})_*= (\mathfrak{e}^a)_*$ for $a=0,1$. Since $\ev_a$ is an isomorphism, by Proposition \ref{prop:ev_iso}, we conclude $(\mathfrak{e}^a)_*$ is an isomorphism if and only if $(\mathfrak{E})_*$ is an isomorphism. Now let
	$\bar{\mathfrak{e}}$ be a $L_{(1)}$ homotopy inverse for $\mathfrak{e}$, then by the previous argument $(\mathfrak{e}\circ \bar{\mathfrak{e}})_*$ (and $\bar{\mathfrak{e}}\circ \mathfrak{e})_*$) is an isomorphism. Hence we conclude $(\mathfrak{e})_*$ is an isomorphism from the equality $(\mathfrak{e}\circ \bar{\mathfrak{e}})_*= (\mathfrak{e})_* \circ (\bar{\mathfrak{e}})_*$.
	
	The same argument proves the statement for $(\mathfrak{d})^*$.
\end{proof}

With these preparations, we may deduce the following result which is the analogue of Theorem~\ref{thm:A1} in the case of $L_\infty$ spaces. Its proof is essentially the same: using the previous results we prove analogues of Proposition~\ref{prop:torsor} and Lemma~\ref{obs_composition} which lead to the following theorem. Since this involves only minor modifications we omit its proof.

\begin{thm}\label{thm:A1-spaces}
An $L_\infty$ space homomorphism $\mathfrak{f}=(f, f^\sharp): (M,\mathfrak{g}) \to (N,\mathfrak{h})$ is a homotopy equivalence if and only if $(f,f^\sharp_1)$ is an $L_{(1)}$ homotopy equivalence.
\end{thm}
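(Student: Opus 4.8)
The plan is to transcribe the proof of Theorem~\ref{thm:A1} into the $L_\infty$-space setting, using the space-level machinery assembled in this section in place of its $A_\infty$-algebra counterparts. The only-if direction is immediate: a homotopy equivalence of $L_\infty$ spaces restricts on first components to an $L_{(1)}$ homotopy equivalence. For the if direction, let $(e,e^\sharp_1):(N,\mathfrak{h})\to(M,\mathfrak{g})$ be an $L_{(1)}$ homotopy inverse of $(f,f^\sharp_1)$, and argue by induction on $n$: assuming we have an $L_{(n)}$ morphism $\mathfrak{e}=(e,e^\sharp_1,\ldots,e^\sharp_n)$ with $\mathfrak{e}\circ\mathfrak{f}\cong\id$ as $L_{(n)}$ morphisms, I would produce a lift $\widetilde{\mathfrak{e}}$ to an $L_{(n+1)}$ morphism with $\widetilde{\mathfrak{e}}\circ\mathfrak{f}\cong\id$ as $L_{(n+1)}$ morphisms.

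First I would show the obstruction $\mathfrak{o}(\mathfrak{e})$ vanishes. By homotopy invariance of obstruction classes (which follows here from the evaluation isomorphism of Proposition~\ref{prop:ev_iso}, exactly as in the algebra case) we have $\mathfrak{f}^*\mathfrak{o}(\mathfrak{e})=\mathfrak{o}(\mathfrak{e}\circ\mathfrak{f})=\mathfrak{o}(\id)=0$; since $(f,f^\sharp_1)$ is an $L_{(1)}$ homotopy equivalence, the pull-back $\mathfrak{f}^*$ is an isomorphism by the preceding lemma, whence $\mathfrak{o}(\mathfrak{e})=0$ and $\mathfrak{e}$ admits an $L_{(n+1)}$ lift. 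The same reasoning, applied to an $L_{(n)}$ homotopy $H$ between $\id$ and $\mathfrak{e}\circ\mathfrak{f}$, gives $\mathfrak{o}(H)=0$, so $H$ too lifts.

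Next I would establish the $L_\infty$-space analogues of Proposition~\ref{prop:torsor} and Lemma~\ref{obs_composition}: the set $\sL(\mathfrak{e})$ of $L_{(n+1)}$ lifts of $\mathfrak{e}$ modulo homotopy carries a transitive action of $H^0\big(D^{n+1}(\mathfrak{h},e^*\mathfrak{g}),d\big)$ via $[\beta].[e_{n+1}]=[e_{n+1}+\beta]$, with the witnessing homotopies built as in Proposition~\ref{prop:torsor}, and both composition with $\mathfrak{f}$ and the evaluation maps are equivariant for the relevant pull-backs. With these in place I would contemplate the square
\[\begin{CD}
\sL(H) @>(\ev_0)_*>> \sL(\id) \\
@V(\ev_1)_*VV @. \\
\sL(\mathfrak{e} \circ \mathfrak{f}) @<-\circ \mathfrak{f}<< \sL(\mathfrak{e})
\end{CD}\]
and, using transitivity together with surjectivity of the evaluation map in degree zero (supplied by the proof of Proposition~\ref{prop:ev_iso}, which yields a chain homotopy in all degrees), adjust an arbitrary lift $\widetilde{H}'$ of $H$ by a class $\gamma$ with $(\ev_0)_*\gamma=\beta$ so that the resulting $\widetilde{H}$ satisfies $(\ev_0)_*(\widetilde{H})=\id$. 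Chasing the diagram then produces a lift $\widetilde{\mathfrak{e}}$ of $\mathfrak{e}$ with $(\ev_1)_*(\widetilde{H})=\widetilde{\mathfrak{e}}\circ\mathfrak{f}$, i.e. $\widetilde{\mathfrak{e}}\circ\mathfrak{f}\cong\id$ as $L_{(n+1)}$ morphisms.

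Finally, to upgrade this one-sided inverse to a genuine homotopy equivalence, I would note that $\widetilde{\mathfrak{e}}$ is itself a weak equivalence (its first component is still the chosen $L_{(1)}$ inverse), so the same induction yields an $L_{(n+1)}$ morphism $\mathfrak{f}'$ extending $(f,f^\sharp_1,\ldots,f^\sharp_n)$ with $\mathfrak{f}'\circ\widetilde{\mathfrak{e}}\cong\id$; the manipulation $\mathfrak{f}\circ\widetilde{\mathfrak{e}}\cong\mathfrak{f}'\circ\widetilde{\mathfrak{e}}\circ\mathfrak{f}\circ\widetilde{\mathfrak{e}}\cong\mathfrak{f}'\circ\widetilde{\mathfrak{e}}\cong\id$ then closes the induction and, in the limit, produces the homotopy inverse. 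I expect the main obstacle to be bookkeeping rather than conceptual: one must run the entire obstruction theory on the \emph{extended} algebras $\widetilde{\mathfrak{g}},\widetilde{\mathfrak{h}}$ and keep every construction—lifts, the torsor action, and the concatenated homotopies—compatible with the flat connections built into Definition~\ref{def:homotopy}, so that Proposition~\ref{prop:ev_iso} applies and the homotopies glue. Once connection-compatibility is respected throughout, each remaining step is a routine transcription of the proof of Theorem~\ref{thm:A1}.
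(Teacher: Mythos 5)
Your proposal matches the paper's intended argument exactly: the paper itself omits the proof of this theorem, stating only that one transcribes the proof of Theorem~\ref{thm:A1} using Proposition~\ref{prop:ev_iso}, the lemma on $(\mathfrak{e})_*$ and $(\mathfrak{d})^*$ being isomorphisms, and space-level analogues of Proposition~\ref{prop:torsor} and Lemma~\ref{obs_composition}. Your write-up supplies precisely those steps (including the correct caveat about running everything on the extended algebras $\widetilde{\mathfrak{g}},\widetilde{\mathfrak{h}}$ with connection-compatible lifts), so it is correct and takes essentially the same route.
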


\begin{rmk}
Unlike in the algebraic case (see Remark~\ref{rmk:whitehead}) the Whitehead theorem of $L_\infty$ spaces (Theorem~\ref{thm:main}) does not immediately follow from the above result. The proof that quasi-isomorphisms are $L_{(1)}$ homotopies in the curved situation is considerably harder. In the remaining part of the section, we shall first prove Theorem~\ref{thm:main2} on the existence of minimal charts. Then we make use of the minimal charts to prove the desired Whitehead theorem.
\end{rmk}

\medskip
\subsection{Minimal charts}~\label{subsec:minimalchart}
Let $\mathbb{M}=(M,\mathfrak{g})$ be a $L_\infty$ space and let $p\in M$ be a point in the zero-set of $\mu_0$. As in the Introduction we define the tangent complex of $(M,\mathfrak{g})$ at $p$ to be
\begin{equation}
T_p\mathbb{M}:= T_p M \stackrel{\nabla \mu_0|_p}{\longrightarrow} \mathfrak{g}_2|_p \stackrel{\mu_1|_p}{\longrightarrow} \mathfrak{g}_3|_p \stackrel{\mu_1|_p}{\longrightarrow} \mathfrak{g}_4|_p \stackrel{\mu_1|_p}{\longrightarrow} \ldots \stackrel{\mu_1|_p}{\longrightarrow} \mathfrak{g}_N|_p.
\end{equation}
The fact that this is indeed a complex follows from the $L_\infty$ algebra equation together with the condition $\mu_0|_p=0$. Also note that the first map is independent of the connection.

Let $\mathfrak{f}=(f, f^\sharp): (M,\mathfrak{g}) \to (N,\mathfrak{h})$ be a $L_\infty $ morphism and $p\in \mu_0^{-1}(0)$. It easily follows from the definition of morphism that $df$ and $f^\sharp_1$ induce a chain map $T_p\mathfrak{f}: T_p\mathbb{M} \to T_{f(p)}\mathbb{N}$.

\begin{defi}
	Let $(M,\mathfrak{g})$ be a $L_\infty$ space and let $p\in \mu_0^{-1}(0)$. We say $(M,\mathfrak{g})$ is \emph{minimal} at $p$ if all the maps in the complex $T_p\mathbb{M}$ are zero.
	
	A morphism $\mathfrak{f}: (M,\mathfrak{g}) \to (N,\mathfrak{h})$ is called a \emph{quasi-isomorphism} at $p$ if the chain map $T_p\mathfrak{f}$ induces an isomorphism in cohomology.
\end{defi}

We have the following easy lemma.

\begin{lem}\label{lem:qism}
	A $L_{\infty}$ morphism which is a $L_{(1)}$ homotopy equivalence is a quasi-isomorphism at any point.
\end{lem}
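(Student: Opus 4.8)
The plan is to prove the stronger statement that $T_p\mathfrak{f}$ is a \emph{chain homotopy equivalence}; being a quasi-isomorphism then follows at once. The argument rests on three ingredients: functoriality of the tangent complex, the observation that an $L_{(1)}$ homotopy must fix the relevant base points, and a linearization of the homotopy equation \eqref{eq:homotopy} at $p$.

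First I would record two elementary facts. Since an $L_\infty$ morphism satisfies $f^\sharp_1(\mu_0)=f^*\mu_0$ as sections, evaluating at $p\in\mu_0^{-1}(0)$ and using linearity of $f^\sharp_1|_p$ gives $\mu_0(f(p))=0$, so $f(p)$ lies in the zero locus of $\mathfrak{h}$ and $T_p\mathfrak{f}\colon T_p\mathbb{M}\to T_{f(p)}\mathbb{N}$ is defined. Second, inspecting the composition formula \eqref{eq:comp} in the linear case shows $T_p(\mathfrak{g}\circ\mathfrak{f})=T_{f(p)}\mathfrak{g}\circ T_p\mathfrak{f}$ on the nose (chain rule on the base, composition of bundle maps on the fibres) and $T_p\mathrm{id}=\mathrm{id}$.

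The key step is that $L_{(1)}$-homotopic morphisms induce chain homotopic maps of tangent complexes. Given a homotopy $(F,F^\sharp)$ between $(f^0,f^{0,\sharp})$ and $(f^1,f^{1,\sharp})$, I first observe that the path $t\mapsto F(p,t)$ is constant: the defining relation $h_1^t(\mu_0)=\partial F/\partial t$ (listed after Definition~\ref{def:homotopy}), evaluated at $p$, gives $\partial F/\partial t(p,t)=h_1^t|_p(\mu_0(p))=0$. Hence $f^0(p)=f^1(p)$ and the two tangent maps share the target complex $T_{f(p)}\mathbb{N}$. I then set $H:=\int_0^1 h_1^t|_p\,dt$, a degree $-1$ map $T_p\mathbb{M}\to T_{f(p)}\mathbb{N}$, and extract the chain homotopy identity by restricting \eqref{eq:homotopy} with $n=1$ to $p$. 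Because $\mu_0(p)=0$, every term involving the curvature drops out, and because $f^t(p)$ is constant the operations $\mu_1^t|_p$ are $t$-independent and parallel transport along the constant flat path is trivial; integrating the surviving relation $\nabla_{d/dt}f_1^t(a)=\mu_1^{\mathfrak{h}}h_1^t(a)+h_1^t(\mu_1^{\mathfrak{g}}a)$ over $t\in[0,1]$ yields $f^1_1|_p-f^0_1|_p=\mu_1^{\mathfrak{h}}H+H\mu_1^{\mathfrak{g}}$. One checks this holds uniformly across the complex, including at the corner $T_pM\to\mathfrak{g}_2|_p$: there $H$ vanishes on $T_pM$ for degree reasons, and the identity reduces to $df^1_p-df^0_p=H\circ\nabla\mu_0|_p$, which is precisely the integrated form of $\tfrac{d}{dt}df^t_p(X)=\nabla_X(\partial F/\partial t)|_p=h_1^t|_p(\nabla\mu_0|_p(X))$.

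With these in hand the conclusion is formal. Let $(g,g^\sharp_1)$ be an $L_{(1)}$ homotopy inverse of $(f,f^\sharp_1)$. Applying the key step to $\mathfrak{g}\circ\mathfrak{f}\cong\mathrm{id}_{\mathbb{M}}$ gives $g(f(p))=p$ together with $T_{f(p)}\mathfrak{g}\circ T_p\mathfrak{f}\simeq\mathrm{id}$ on $T_p\mathbb{M}$; applying it to $\mathfrak{f}\circ\mathfrak{g}\cong\mathrm{id}_{\mathbb{N}}$ at the point $f(p)$ gives $T_p\mathfrak{f}\circ T_{f(p)}\mathfrak{g}\simeq\mathrm{id}$ on $T_{f(p)}\mathbb{N}$. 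Thus $T_p\mathfrak{f}$ admits a two-sided chain homotopy inverse, hence induces an isomorphism on cohomology, i.e. $\mathfrak{f}$ is a quasi-isomorphism at $p$. I expect the main obstacle to be the bookkeeping inside the key step: tracking the Koszul signs in \eqref{eq:homotopy}, confirming that the curvature terms genuinely vanish at $p$ so that an uncorrected chain homotopy survives, and treating the $T_pM$ corner on the same footing as the bundle part.
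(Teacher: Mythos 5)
Your proposal is correct, and it is the natural argument behind the paper's unproved ``easy lemma'': the key observations — that $h_1^t(\mu_0)=\partial F/\partial t$ forces a homotopy to fix points of $\mu_0^{-1}(0)$ (so all $\delta$-exact discrepancies, which are multiples of $\mu_0$, vanish there), and that the $n=1$ homotopy equation then linearizes to a genuine chain homotopy on the tangent complexes, including the $T_pM$ corner via torsion-freeness — are exactly what is needed, and the formal two-sided-inverse conclusion follows. The paper omits the proof entirely, so your write-up simply supplies the intended details.
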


For any open set $W\subseteq M$ we can restrict the $L_\infty$ structure to $W$ and so obtain a new $L_\infty$ space $(W, \mathfrak{g}|_W).$ We define a chart at $p$ to be a $L_\infty$ space $(N, \mathfrak{h})$, with $n_p\in \mu_0^{-1}(0)$, together with a $L_\infty$ homotopy equivalence $\mathfrak{i}=(i,i^\sharp): (N, \mathfrak{h}) \to (W, \mathfrak{g}|_W)$ for some neighborhood $W$ of $p$ in $M$, such that $i(n_p)=p$. We say the chart is \emph{minimal} if $(N, \mathfrak{h})$ is minimal at $n_p$.

The main step in the proof of the inverse function theorem for $L_\infty$ spaces is the construction of minimal charts. We will do it in two steps.

\begin{prop}\label{prop:minimal_mu}
		Let $(M,\mathfrak{g})$ be a $L_\infty$ space and $q\in \mu_0^{-1}(0)$. There is a chart at $q$, $(N, \mathfrak{h})$ with the property $\nabla\mu_0^N|_{n_q}=0$.
\end{prop}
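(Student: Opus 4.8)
The plan is to produce the chart by contracting, via the curved homotopy transfer of Theorem~\ref{teorema B1}, precisely the acyclic part of the tangent complex responsible for the nonvanishing of $\nabla\mu_0|_q$. At $q$ the first map of the tangent complex is $L:=\nabla\mu_0|_q=d\mu_0|_q\colon T_qM\to\mathfrak g_2|_q$, which is connection-independent since $\mu_0(q)=0$. First I would split $\mathfrak g_2|_q=W_1\oplus W_2$ with $W_1=\operatorname{im}L$, and $T_qM=U'\oplus U$ with $U=\ker L$ and $L|_{U'}\colon U'\xrightarrow{\sim}W_1$. Extending $W_1,W_2$ to subbundles $\mathfrak g_2^{(1)},\mathfrak g_2^{(2)}$ near $q$ and writing $\mu_0^{(1)}=\operatorname{proj}_{\mathfrak g_2^{(1)}}\mu_0$, the map $\mu_0^{(1)}$ is a submersion at $q$ (its differential is $L$, onto $W_1$), so $N:=(\mu_0^{(1)})^{-1}(0)$ is a submanifold through $q$ with $T_qN=\ker L=U$ and $\mu_0|_N\in\Gamma(\mathfrak g_2^{(2)}|_N)$. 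The candidate chart is $(N,\mathfrak h)$ with $\mathfrak h:=\mathfrak g_2^{(2)}|_N\oplus\mathfrak g_3|_N\oplus\cdots\oplus\mathfrak g_d|_N$ and $i\colon N\hookrightarrow M$ the inclusion, so $n_q=q$.

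I would obtain the $L_\infty$ structure on $\mathfrak h$ and the morphism $i^\sharp$ from Theorem~\ref{teorema B1}, applied over $R=C^\infty(N)$ to the extended algebra $\widetilde{\mathfrak g}|_N$. Recall from \eqref{eq:extension} that the unary operation of $\widetilde{\mathfrak g}$ sends $X\in T_M$ to $\nabla_X\mu_0$, so $(\widetilde{\mathfrak g}|_q,\mu_1)$ is exactly the tangent complex and $(U',\mathfrak g_2^{(1)})$ is the acyclic piece we want to discard. Accordingly I set $V:=T_N\oplus\mathfrak h$, let $i,p$ be the inclusion of and projection onto $V$ (killing $U'$ and $\mathfrak g_2^{(1)}|_N$), and let $H$ vanish except on $\mathfrak g_2^{(1)}|_N$, where it is minus the inverse of the isomorphism $\mu_1|_{U'}\colon U'\to\mathfrak g_2^{(1)}|_N$. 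Taking $C:=\mu_0|_N\in\mathfrak h_2$ one has $i(C)=\mu_0|_N$, and Theorem~\ref{teorema B1} (in its $L_\infty$ version) then yields a curved $L_\infty$ structure on $\mathfrak h$ with $\mu_0^{\mathfrak h}=C$ together with an $L_\infty$ morphism $\varphi=i^\sharp$ with $\varphi_1=i$.

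The crux, and the step I expect to be the main difficulty, is to verify that $(i,p,H)$ is \emph{exact} curved retraction data, i.e. that \eqref{homotopy}, $\mu_1H+H\mu_1=ip-\id-H\mu_1^2H$, together with the side conditions \eqref{homotopy_2}, hold over all of $N$ and not merely at $q$. Using $\mu_1^2=-\mu_2(\mu_0,\cdot\,)$ we have $\mu_1^2|_q=0$, and the identities hold at $q$ by the choice of $i,p,H$; away from $q$ the curvature is nonzero and the naive choice leaves an error proportional to the $\mathfrak g_2^{(2)}$-component of $\mu_1(U')$. I would remove this error by adapting the complement $U'$ and the splitting to $\mu_0$ along $N$, so that $\mu_1(U')\subseteq\mathfrak g_2^{(1)}$ and, taking $\mathfrak g_2^{(1)}$ parallel, $\mu_1(T_N)\subseteq\mathfrak g_2^{(2)}$ along $N$ (the latter because $\mu_0^{(1)}|_N=0$ gives $\nabla_X\mu_0^{(1)}=0$ for $X\in T_N$). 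Any residual term can be cleared by a correction of $H$ that terminates, since $\widetilde{\mathfrak g}$ lives in the bounded range of degrees $1,\dots,d$ and $\mu_1^2$ strictly raises degree. A secondary verification, which I expect to follow from the $\nabla$-compatibility of $(i,p,H)$ exactly as in the proof that \eqref{eq:ext-mor} defines an $L_\infty$ morphism, is that the transferred operations are the $\nabla$-extension of an honest $L_\infty$-space structure on $N$, so that $(N,\mathfrak h)$ is indeed an $L_\infty$ space and $\mathfrak i=(i,i^\sharp)$ an $L_\infty$-space morphism.

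Granting this, both claims are immediate. The curvature $\mu_0^{\mathfrak h}=\mu_0|_N$ is a section of $\mathfrak g_2^{(2)}|_N$ vanishing at $q$, and since $q$ is a zero its intrinsic derivative there is $\operatorname{proj}_{W_2}\circ L|_{U}$, which is zero because $U=\ker L$; hence $\nabla\mu_0^{\mathfrak h}|_{q}=0$, the asserted property. Finally, \eqref{homotopy} and \eqref{homotopy_2} say exactly that $i^\sharp_1=i$ is an $L_{(1)}$ homotopy equivalence with inverse $p$ and homotopy $H$ (here $H(\mu_0)=0$ since $H$ vanishes on $\mathfrak g_2^{(2)}$, and $H\mu_1^2H$ is $\delta$-exact as it feeds $\mu_0$ into $\mu_2$); restricting $M$ to a neighbourhood $W$ of $q$ on which all the choices are defined, Theorem~\ref{thm:A1-spaces} then upgrades $\mathfrak i\colon(N,\mathfrak h)\to(W,\mathfrak g|_W)$ to a homotopy equivalence. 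Thus $(N,\mathfrak h)$ is the required chart at $q$.
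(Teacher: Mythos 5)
Your route differs from the paper's: you contract the whole image of $\nabla\mu_0|_q$ in one step by feeding the extended algebra $\widetilde{\mathfrak{g}}$ into Theorem~\ref{teorema B1} with a nontrivial $H$ pairing $U'\subset T_M$ against $\mathfrak g_2^{(1)}$, whereas the paper cuts down one codimension at a time, takes $H=0$ (so the transferred operations are literally the restrictions $\mu_k|_E$, manifestly an $L_\infty$ space), and iterates until the rank of $\nabla\mu_0|_q$ is zero. Your idea is conceptually attractive, but the step you yourself flag as ``the main difficulty'' is a genuine gap, and it is exactly where the paper does its work. You need \eqref{homotopy} and \eqref{homotopy_2} to hold on a whole neighbourhood in $N$, which forces the simultaneous conditions $\mu_1(U')\subseteq\mathfrak g_2^{(1)}$, $\mu_1(T_N)\subseteq\mathfrak g_2^{(2)}$, and $\mathfrak g_2^{(1)}$ parallel; but $\mathfrak g_2^{(1)}$ determines $N$ and hence $T_N$ and the admissible $U'$, so these requirements are circular and you have not shown they can be met at once. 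The fallback of ``a correction of $H$ that terminates'' is not an argument. Separately, you must show the transferred structure on $T_N\oplus\mathfrak h$ is the $\nabla$-extension of an honest $L_\infty$ space structure on $\mathfrak h$ (bundle maps in degrees $\geq 2$, degree-one part governed by \eqref{eq:extension}); with a nonzero $H$ mixing $T_M$ and $\mathfrak g_2$ this is not automatic, and you only assert it.

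The more serious gap is in the final paragraph. An $L_{(1)}$ homotopy equivalence of $L_\infty$ \emph{spaces} is not the algebra-level statement that $(i,p,H)$ satisfies the retraction identities over $N$: it requires an underlying smooth map $P\colon W\to N$, an $L_{(1)}$ morphism $(P,P^\sharp_1)$ defined over all of $W$, and a homotopy in the sense of Definition~\ref{def:homotopy}, i.e.\ a smooth $F\colon W\times[0,1]\to W$ with $h_1^t(\mu_0)=\partial F/\partial t$. Your entire construction lives over $N$ (you transfer over $R=C^\infty(N)$), so it produces neither $P$ nor $F$; and since $\mu_0^{(1)}$ does not vanish off $N$, your $H$ does not even satisfy $H(\mu_0)=0$ away from $N$, so it cannot serve as the $h_1^t$ of a constant-in-$t$ homotopy over $W$. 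This is precisely the content the paper supplies explicitly: the retraction $\Pi(x,y,t)=(x,ty)$, the matrix formula for $\pi_1^{t,\sharp}$, the $\delta$-exactness computation with $P_2$ showing it is an $L_{(1)}$ morphism, and the check $h_1^t(\mu_0)=\partial\Pi/\partial t$. Without an analogue of these, your proof of the homotopy-equivalence half of the chart property is incomplete.
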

\begin{proof}
	In a neighborhood $U$ of $q$ with coordinates $(x_1,\ldots, x_n)$, trivialize the bundle $\mathfrak{g}_2$ and write $\mu_0=s=(s_1,\ldots, s_m): U \to \mathbb{R}^m$. If $\nabla\mu_0|_{q}\neq0$, there is $i,j$ such that $\frac{\partial s_i}{\partial x_j}(q)\neq 0$. Hence $N={s_i}^{-1}(0) \cap W$, for some small open set $W\subset U$, is a smooth submanifold.  It follows from the inverse function theorem (for smooth manifolds) that we can find coordinates on $W$, $(x_1,\ldots,x_{n-1},y)$ such that $N=\{(x_1,\ldots,x_{n-1},0)\}$. Moreover, we can decompose the bundle $\mathfrak{g}_2|_N=E_2 \oplus C_2$ such that $s(x,y)=(\nu(x,y),y)$. We define $E_k=\mathfrak{g}_k|_N$ for $k\geq 3$. We define the map $\iota: N \to M$ as $\iota(x)=(x,0)$. Additionally we denote by $i$ the inclusion $E \to \mathfrak{g}$ and by $p$ the projection $\mathfrak{g} \to E$. We claim the operations $\lambda_0:=\nu|_N$ and $\lambda_k= \mu_k|_E$, $k\geq 1$ define a $L_\infty$ space. Indeed this is a degenerate case of Theorem \ref{teorema B1} where we take $H=0$. Please note that even though Equation (\ref{eq:Hform}) does not hold on $\mathfrak{g}_2$, the theorem still holds since it is enough to have Equation (\ref{eq:Hform}) hold on $\mathfrak{g}_{\geq 3}$, since this is the only situation where it is applied.
	
	 Therefore we have an $L_\infty$ space
	\[ (N, \mathfrak{E}:=\oplus_{k\geq 2}E_k, \lambda_k).
	\]
Moreover there is a $L_\infty$ homomorphism $\iota^\sharp: \mathfrak{E} \to \iota^* \mathfrak{g}$, with $\iota^\sharp_1=i|_N$.
	We now construct a $L_{(1)}$ homotopy inverse to $(\iota, \iota^\sharp)$. For this purpose, we define the maps $\Pi: M\times [0,1]\to M$, $\Pi(x,y,t)=(x,ty)$ and $\pi^{t,\sharp}_1: \mathfrak{g} \to (\Pi^t)^*\mathfrak{g}$ by the formula
	\[ \begin{bmatrix}
	\id & -\int_t^1 \frac{\partial \nu}{\partial y}(x,sy)\,ds \\
		0 & t\cdot\id 
	\end{bmatrix}: \mathfrak{g}_2 \to (\Pi^t)^*\mathfrak{g}_2 ,
	\]
	and $\pi^{t,\sharp}_1=\id$ on  $\mathfrak{g}_{k\geq 3}$. We claim the pair $(P, P^\sharp_1)$, where $P(x,y)=x$ and $P^\sharp_1=p \pi^{0,\sharp}_1$, is a $L_{(1)}$ morphism from $(M,\mathfrak{g})$ to $(N,\mathfrak{E})$, and  moreover it is a $L_{(1)}$ homotopy inverse to $(\iota, \iota^\sharp)$. We first show that $\pi^{t,\sharp}_1$ is a $L_{(1)}$ homomorphism. An easy computation gives
	\[\pi^{t,\sharp}_1(\mu_0)= (\nu(x,ty), ty)= (\Pi^t)^*(\mu_0).
	\]
	In the decomposition $\mathfrak{g}_2|_N=E_2 \oplus C_2$, we write $\mu_1= (\varphi, \alpha)$ and compute
	\[\big( \pi^{t,\sharp}_1\mu_1-\mu_1^t\pi^{t,\sharp}_1 \big)|_{\mathfrak{g}_2} = (\varphi - \varphi^t , \alpha - t \alpha^t + \varphi^t \cdot \int_t^1 \frac{\partial \nu}{\partial y}(sy)\,ds).
	\]
	We claim this is $\delta$-exact. We define	
	\[P_2=\int_t^1 \mu_2^s(K \otimes \id) -  \mu_2^s( \id\otimes K)\,ds,
	\]
	where $K: \mathfrak{g}_2 \to T_M$ is the map defined as $K(e,c)=c\frac{\partial}{\partial y}$, and compute
	\begin{align}
	\delta(P_2)(e,c) & = \int_t^1 \mu^s_2(y\frac{\partial}{\partial y},  (e,c) )- \mu^s_2( \mu_0 ,  c\frac{\partial}{\partial y})\, ds \nonumber \\
	& = \int_t^1 \nabla_{\frac{d}{d s}} \mu^s_1((e,c)) - c \frac{\partial\mu_1}{\partial y}(x,sy)(\mu_0)\, ds\\
	& = \big( \varphi - \varphi^t , \ \alpha - \alpha^t - \int_t^1 \frac{\partial\varphi}{\partial y}(sy)\cdot\nu(x,y)\,ds - \int_t^1 \frac{\partial\alpha}{\partial y}(sy)y\,ds \big) (e, c). \nonumber
	\end{align}
	Therefore $\delta(P_2)= \big( \pi^{t,\sharp}_1\mu_1-\mu_1^t\pi^{t,\sharp}_1 \big)|_{\mathfrak{g}_2}$ is equivalent to the following identity
	\[ \int_t^1 \varphi(x,ty) \cdot \frac{\partial \nu}{\partial y}(sy)\,ds + \int_t^1 \frac{\partial\varphi}{\partial y}(sy)\cdot\nu(x,y)\, ds = t \alpha^t - \alpha,
	\]
	which in turns follows from the fact that the left-hand side equals
	\begin{align}
	\int_t^1 \frac{\partial(\varphi\cdot\nu)}{\partial y}(x,sy)\, ds.
	\end{align}
	The $L_\infty$ relation $\mu_1(\mu_0)=0$ implies $\varphi\cdot\nu = -\alpha\cdot y $, hence
		\begin{align}
	\int_t^1 \frac{\partial(\varphi\cdot\nu)}{\partial y}(x,sy)\, ds= -	\int_t^1 \frac{\partial\alpha}{\partial y}(x,sy)sy + \alpha(x,sy)\, ds= t\alpha(x,ty) - \alpha(x,y).\nonumber
	\end{align}
	Here the last equality is given by integration by parts. Similarly one shows that $\big( \pi^{t,\sharp}_1\mu_1-\mu_1^t\pi^{t,\sharp}_1 \big)|_{\mathfrak{g}_{\geq 3}} = \delta(\int_t^1 \mu_2^s(K \otimes \id))$. Hence we conclude that $\pi^{t,\sharp}_1$ is a $L_{(1)}$ homomorphism.
	
	We observe that $P^\sharp \circ \iota^\sharp=\id$, $\pi^{0,\sharp}_1= \iota^\sharp \circ P^\sharp$ and $\pi^{0,\sharp}_1= \id$. Therefore, in order to conclude that $(P, P\sharp)$ is a $L_{(1)}$ homotopy inverse to $(\iota, \iota^\sharp)$ it is enough, by (\ref{eq:homotopy}), to define
	\[  h_1^t= \left\{ {\begin{array}{ll}
		K, & \mathfrak{g}_2 \\
		0, & \mathfrak{g}_{\geq 3}\\
		\end{array} } \right.
	\]
	and check the following identities
	\[h^t_1(\mu_0)= \frac{\partial \Pi}{\partial t},  \ \ \frac{\partial \pi^{t,\sharp}_1}{\partial t} = \mu_1^t h_1^t + h^t_1 \mu .\]
	
	We have thus proved that the $L_\infty$ morphism $(\iota, \iota^\sharp)$ is a $L_{(1)}$ homotopy equivalence. It now follows from Theorem~\ref{thm:A1-spaces} that $(\iota, \iota^\sharp)$ is a $L_\infty$ homotopy equivalence and therefore $(N, \mathfrak{E})$ is a chart at $q$. By construction, the rank of $\nabla\mu_0|_q$ in $n$ is strictly smaller than in the original space $M$, hence by applying the previous construction finitely many times we can find a chart at $q$ such that $\nabla\mu_0|_q$ has rank zero as claimed in the statement.
\end{proof}	

We now state and prove Theorem \ref{thm:main2} in the Introduction.

\begin{thm}\label{thm:minimal}
		Let $(M,\mathfrak{g})$ be a $L_\infty$ space and $q\in \mu_0^{-1}(0)$. There is a minimal chart at $q$.
\end{thm}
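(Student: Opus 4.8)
The plan is to build the minimal chart on top of Proposition~\ref{prop:minimal_mu}, by repeatedly contracting a \emph{single} acyclic pair of the tangent complex, in the same inductive spirit in which Proposition~\ref{prop:minimal_mu} lowers the rank of $\nabla\mu_0$ one coordinate at a time. First I would apply Proposition~\ref{prop:minimal_mu} to replace $(M,\mathfrak{g})$ by a chart at $q$ with $\nabla\mu_0|_q=0$; since charts compose, it suffices to produce a minimal chart of this new space, so from now on I assume $\nabla\mu_0|_q=0$. The only remaining obstruction to minimality is then the differential $\mu_1|_q$ on the fibres $\mathfrak{g}_k|_q$ (a genuine complex, since $\mu_0(q)=0$ forces $\mu_1^2|_q=0$), and the goal is to kill it.

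Suppose $\mu_1|_q\neq 0$, say $w:=\mu_1|_q(v)\neq 0$ for some $v\in\mathfrak{g}_k|_q$, $k\geq 2$. I would choose, on a neighbourhood $W$ of $q$, a line sub-bundle $L\subseteq\mathfrak{g}_k$ with $L|_q=\langle v\rangle$ on which $\mu_1$ is fibrewise injective, set $L':=\mu_1(L)\subseteq\mathfrak{g}_{k+1}$, and pick smooth complements $\mathfrak{g}_k=L\oplus\mathfrak{h}_k$, $\mathfrak{g}_{k+1}=L'\oplus\mathfrak{h}_{k+1}$, with $\mathfrak{h}_j=\mathfrak{g}_j$ for $j\neq k,k+1$. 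Correcting $\mathfrak{h}_k$ by an $L$-valued term arranges $\mu_1(\mathfrak{h}_k)\subseteq\mathfrak{h}_{k+1}$ over $W$. With $\mathfrak{h}:=\bigoplus_j\mathfrak{h}_j$, let $i:\mathfrak{h}\hookrightarrow\mathfrak{g}$ and $p:\mathfrak{g}\to\mathfrak{h}$ be the inclusion and projection, and define $H:=-(\mu_1|_L)^{-1}$ on $L'$ and $H:=0$ elsewhere. A direct fibrewise check then yields the side conditions $pi=\id$, $Hi=0$, $pH=0$ together with the \emph{exact} identity $\mu_1H+H\mu_1=ip-\id$; and since the image of $H$ lies in $\mathfrak{g}_k$, so that $\mu_1^2H$ lands in $\mathfrak{g}_{k+2}$ where $H$ vanishes, one has $H\mu_1^2H=0$ and Equation~\eqref{homotopy} holds on the nose. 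Thus $(i,p,H)$ is curved homotopy retraction data of the minimal type, and Theorem~\ref{teorema B1} produces an $L_\infty$ structure on $\mathfrak{h}$ with $\mu_1^{\mathfrak{h}}=p\mu_1 i$ and a homomorphism $\varphi$ with $\varphi_1=i$. The data $(i,p,H)$ exhibit $i$ as an $L_{(1)}$ homotopy equivalence via Lemma~\ref{lem:a1homo} (note $H\mu_0=0$ and $ip-\id-\mu_1H-H\mu_1=0$), so Theorem~\ref{thm:A1-spaces} upgrades $\varphi$ to a homotopy equivalence and $(\mathfrak{h})$ is a chart at $q$.

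It remains to see that the step makes progress and preserves what was already achieved. Because $\mu_1(\mathfrak{h}_k)\subseteq\mathfrak{h}_{k+1}$ while $w=\mu_1|_q(v)$ spans the discarded directions, the rank of $\mu_1^{\mathfrak{h}}|_q=p\mu_1 i|_q$ in degrees $(k,k+1)$ drops by exactly one, and in every other degree it cannot increase (it is a restriction-then-projection of $\mu_1|_q$); hence the total rank of $\mu_1|_q$ strictly decreases. Iterating finitely many times drives $\mu_1|_q$ to zero, and composing the resulting homotopy equivalences gives the minimal chart.

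The step I expect to be the main obstacle is the bundle bookkeeping that keeps $\nabla\mu_0|_q=0$ intact throughout. When $k\geq 3$ the curvature is untouched. When $k=2$ one must choose $L$ and the complement $\mathfrak{h}_2$ so that the section $\mu_0$ lies in $\mathfrak{h}_2$ (equivalently $i(C)=\mu_0$ with $H\mu_0=0$), which is possible because the relation $\mu_1\mu_0=0$ places $\mu_0$ in $\ker\mu_1$ whereas $v\notin\ker\mu_1|_q$ and $\mu_0(q)=0$; one then has $\nabla\mu_0^{\mathfrak{h}}|_q=\nabla(p\mu_0)|_q=0$ since both $\mu_0(q)=0$ and $\nabla\mu_0|_q=0$. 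Carrying out these choices smoothly and compatibly over $W$ at each stage is the bulk of the technical work; the pleasant feature that keeps it manageable is that every individual contraction is governed by the \emph{uncurved} retraction identity, the curved correction $H\mu_1^2H$ being automatically zero for a one-dimensional $H$.
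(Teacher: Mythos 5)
Your proof is correct, but it organizes the reduction differently from the paper. After the common first step (Proposition~\ref{prop:minimal_mu}), the paper kills all of $\mu_1|_q$ in a \emph{single} application of Theorem~\ref{teorema B1}: it builds a global three-way splitting $\mathfrak{g}_k=A_k\oplus B_k\oplus C_k$ with $\epsilon=\mu_1|_{B}:B_k\xrightarrow{\sim}C_{k+1}$, sets $H=-\epsilon^{-1}$ on all of $C$, and must then work with the genuinely curved retraction identity $\mu_1H+H\mu_1=ip-\id-H\mu_1^2H$, since for that $H$ the term $H\mu_1^2H$ does not vanish. You instead contract one rank-one acyclic pair $L\oplus\mu_1(L)$ at a time; your observation that a homotopy $H$ supported on a single degree forces $H\mu_1^2H=0$ is correct, so each step uses only the uncurved identity, at the price of an induction on the total rank of $\mu_1|_q$ (your rank-decreasing argument is sound: the rank drops by one in degrees $(k,k+1)$ and cannot increase elsewhere) and of composing finitely many homotopy equivalences. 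The trade-off is clear: the paper pays with the curved correction term once, you pay with iteration but each step is the classical contraction. Two points you should still write out, both routine and both present in the paper's version as the $P_2$ and $Q_2$ computations: (1) that $ip$ and the interpolants $f_1^t=ip+t(\id-ip)$ are themselves $L_{(1)}$ morphisms, i.e.\ that $\mu_1\circ ip-ip\circ\mu_1=\mu_1^2H-H\mu_1^2$ is $\delta$-exact (this uses the $L_\infty$ relation $\mu_1^2=\pm\mu_2(\mu_0,-)$, so it is not literally zero even though your homotopy identity is exact); and (2) the fact that when $k=2$ the section $\mu_0$ automatically lies in $\mathfrak{h}_2$ once you have arranged $\mu_1(\mathfrak{h}_2)\subseteq\mathfrak{h}_3$, since $0=\mu_1\mu_0$ decomposes as a sum in $L'\oplus\mathfrak{h}_3$ and $\mu_1|_L$ is injective --- no extra choice is needed, contrary to what your last paragraph suggests.
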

	\begin{proof}
	Proposition \ref{prop:minimal_mu} implies that we can assume $\nabla\mu_0|_q=0$. 
	We pick a sub-bundle $E_3 \subseteq \mathfrak{g}_3$ such that $E_3|_q \oplus \imag(\mu_1|_q)=\mathfrak{g}_3|_q$. Then  $\widetilde{\mu_1}:  \mathfrak{g}_2 \to \mathfrak{g}_3/E_3$ is surjective at $q$, since $\imag \mu_1|_q=\imag \widetilde{\mu_1}|_q $. Hence $\widetilde{\mu_1}$ is surjective on some neighborhood of $q$, which we denote by $U$. This implies that $A_2:=\ker \widetilde{\mu_1}|_{U}$ is a sub-bundle of $\mathfrak{g}_2$. We pick a complement $A_2\oplus B_2= \mathfrak{g}_2$ and define $C_3=\mu_1(B_2)\subseteq\mathfrak{g}_3|_{U}$. By construction, $\mu_1|_{B_2}$ is injective and therefore $C_3$ is a bundle. Moreover $\mathfrak{g}|_3 = E_3 \oplus C_3$.
	
	Next we pick $E_4\subset \mathfrak{g}_4$ such that $E_4|_q \oplus \imag(\mu_1|_q)=\mathfrak{g}_4|_q$ and define $A_3:=\ker \widetilde{\mu_1}: E_3 \to \mathfrak{g}_4/E_4$. We pick a complement $A_3 \oplus B_3=E_3$ and define $C_4:=\mu_1(B_3)$. We repeat this argument, for all $k$ and obtain a decomposition $\mathfrak{g}_k=A_k\oplus B_k \oplus C_k$ in a neighborhood of $q$, here $C_2=0$. In this decomposition, we the map $\mu_1$ takes the form
	\[ \mu_1=\begin{bmatrix}
	\varphi & 0 & \alpha \\
	\psi & 0 & \beta \\
	0 & \epsilon & \gamma 
	\end{bmatrix}
	\]
	Note that $\epsilon$ is an isomorphism, so we can define the degree $-1$ map
	\[ H=\begin{bmatrix}
	0 & 0 & 0 \\
	0 & 0 & -\epsilon^{-1} \\
	0 & 0 & 0 
	\end{bmatrix}: \mathfrak{g} \to \mathfrak{g}.
	\]
	On $\mathfrak{g}_2$, $H$ is defined to be zero.
	We denote by $i$ the inclusion $A \to \mathfrak{g}$ and by $p$ the projection onto $A$. We have the following identities on $\mathfrak{g}$, which are easy to check,
	\begin{align}
	H\circ i =0,  \ &  \ p\circ H=0\\
	H \mu_1 + \mu_1 H & = i\circ p - \id_{\mathfrak{g}} - H \mu_1^2 H.\label{eq:Hform}
	\end{align}
	The first $L_\infty$ equation $\mu_1(\mu_0)=0$ implies that $\mu_0=(\nu, 0)$, since $\epsilon$ is an isomorphism. Hence $i(\nu)|_N= \iota^*\mu_0$ and we have all the data and conditions in Theorem \ref{teorema B1}. Therefore Theorem \ref{teorema B1} constructs an $L_\infty$ space
	\[ (M, \mathfrak{A}:=\oplus_{k\geq 2}A_k, \lambda_k),
	\]
	with $\lambda_0=\nu$ and $\lambda_1= p \mu_1 i$. Moreover there is a $L_\infty$ morphism $(\iota,\iota^\sharp): (U,\mathfrak{A}) \to (U, \mathfrak{g}|_U$, with $\iota=\id$ and $\iota^\sharp_1=i$. Also observe that, by definition of $A_k$, $\mu_1|_q(A_k)=0$ and thus $(U, \mathfrak{A})$ is minimal at $q$.
	
	The last step is to construct a $L_{(1)}$ homotopy inverse to $(\iota, \iota^\sharp)$ and then appeal to Theorem~\ref{thm:A1-spaces} to conclude that $(\iota, \iota^\sharp)$ is a $L_\infty$ homotopy equivalence. For this purpose, we define the maps  $\pi^{t,\sharp}_1: \mathfrak{g} \to \mathfrak{g}$ by the formula
		\[ \begin{bmatrix}
	\id & 0 & 0 \\
	0 & t\cdot\id &  0\\
	0 & 0 & t\cdot\id 
	\end{bmatrix}: \mathfrak{g} \to \mathfrak{g},
	\]
	We first show that $\pi^{t,\sharp}_1$ is a $L_{(1)}$ homomorphism. Since $\mu_0=(\nu, 0)$ we have $\pi^t_1(\mu_0)= \mu_0$.
	Next we define the map
	\[P_2= (1-t)\Big(H\mu_2(p \otimes p) + p\mu_2(H\otimes \id)- p\mu_2(\id \otimes H) \Big).
	\]
	A simple computation using (\ref{eq:Hform}) gives 
	\[\mu_1\pi^{t,\sharp}_1- \pi^{t,\sharp}_1\mu_1 = \delta(P_2).
	\]
	In particular, we have shown that $P^\sharp_1:=\Pi\pi^{0,\sharp}$ (where $\Pi: \mathfrak{g} \to A $ is the projection) is a $L_{(1)}$ morphism from $\mathfrak{g}$ to $\mathfrak{A}$. 
	
	It is obvious that $P_1^\sharp \circ \iota^\sharp=\id_\mathfrak{A}$. Finally, we need to show that $\iota^\sharp \circ P^\sharp_1$ is $L_{(1)}$ homotopic to the identity. First note $\pi^{1,\sharp}=\id_\mathfrak{g}$ and $\pi^{0,\sharp}=\iota^\sharp \circ P^\sharp_1$. We define $h_1^t=-H$ and easily check
	\[ h_1^t(\mu_0)=0, \ \ \frac{\partial \pi^{t,\sharp}}{\partial t}= h_1^t\mu_1 + \mu_1 h_1^t- \delta(Q_2),
	\] 
	where $Q_2=H\mu_2(H\otimes \id)+ H\mu_2(\id \otimes H)$. This completes the proof that  $(\iota, \iota^\sharp)$ is a $L_{(1)}$ and therefore a  $L_{\infty}$ homotopy equivalence.  
\end{proof}

We are now ready to prove Theorem~\ref{thm:main} in the Introduction.

\begin{thm}
	Let $(M,\mathfrak{g})$ and $(N,\mathfrak{h})$ be  $L_\infty$ spaces and $\mathfrak{f}=(f, f^\sharp): (M,\mathfrak{g}) \to (N,\mathfrak{h})$ be a $L_\infty $ morphism. Assume that $\mathfrak{f}$ is a quasi-isomorphism at $q\in \mu_0^{-1}(0)$. Then there are neighborhoods $U$ of $q$ and $V$ of $f(q)$ such that  such that $f(U)\subseteq V$ and 
	\[\mathfrak{f}|_{U}: (U,\mathfrak{g}|_{U}) \to (V,\mathfrak{h}|_V)
	\]
	is a $L_\infty$ homotopy equivalence.
\end{thm}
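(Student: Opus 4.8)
The plan is to reduce the statement to the \emph{minimal} case, where the tangent-level quasi-isomorphism becomes a genuine pointwise isomorphism that the classical inverse function theorem can handle, and then transport the conclusion back along the minimal charts. The reason this reduction is essential is that $T_q\mathfrak{f}$ is only a quasi-isomorphism of complexes with possibly nonzero differentials, so one cannot apply the smooth inverse function theorem to $f$ directly; after passing to minimal models the differentials vanish and quasi-isomorphism at a point becomes pointwise invertibility of $df$ and $f^\sharp_1$.

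Concretely, I would first invoke Theorem~\ref{thm:minimal} to choose a minimal chart $\mathfrak{i}_M=(i_M,i_M^\sharp)\colon (M',\mathfrak{g}')\to (W_M,\mathfrak{g}|_{W_M})$ at $q$, with $(M',\mathfrak{g}')$ minimal at $q'$ and $i_M(q')=q$, and a minimal chart $\mathfrak{i}_N=(i_N,i_N^\sharp)\colon (N',\mathfrak{h}')\to (W_N,\mathfrak{h}|_{W_N})$ at $f(q)$, with $(N',\mathfrak{h}')$ minimal at $q''$ and $i_N(q'')=f(q)$. Each chart is an $L_\infty$ homotopy equivalence whose homotopy inverse $\mathfrak{p}_M,\mathfrak{p}_N$ can be taken to be the explicit projections of Proposition~\ref{prop:minimal_mu} and Theorem~\ref{thm:minimal}; in particular their underlying smooth maps satisfy $p_M\circ i_M=\id$ and $p_N\circ i_N=\id$. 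After shrinking $W_M$ so that $f(W_M)\subseteq W_N$, I set
\[ \mathfrak{F}:=\mathfrak{p}_N\circ \mathfrak{f}|_{W_M}\circ \mathfrak{i}_M\colon (M',\mathfrak{g}')\to (N',\mathfrak{h}'), \]
and note that $F(q')=p_N(f(q))=p_N(i_N(q''))=q''$, so both source and target are minimal at the relevant base points.

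Next I would show $\mathfrak{F}$ is a quasi-isomorphism at $q'$. By the ``only if'' direction of Theorem~\ref{thm:A1-spaces} together with Lemma~\ref{lem:qism}, the homotopy equivalences $\mathfrak{i}_M,\mathfrak{p}_N$ are quasi-isomorphisms at the relevant points, so by functoriality of the tangent complex $T_{q'}\mathfrak{F}$ equals the composite of $T_{q'}\mathfrak{i}_M$, $T_q\mathfrak{f}$ and $T_{f(q)}\mathfrak{p}_N$ on cohomology; since $T_q\mathfrak{f}$ is a quasi-isomorphism by hypothesis, $\mathfrak{F}$ is one at $q'$. Now minimality enters decisively: the tangent complexes of $(M',\mathfrak{g}')$ and $(N',\mathfrak{h}')$ carry zero differential, so ``isomorphism on cohomology'' forces every component of $T_{q'}\mathfrak{F}$ to be a linear isomorphism; in particular $dF|_{q'}$ and $F^\sharp_1|_{q'}$ are isomorphisms. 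Applying the smooth inverse function theorem to $F$ and continuity to the bundle map $F^\sharp_1$ yields an open $M''\ni q'$ on which $F$ is a diffeomorphism onto $N'':=F(M'')$ and $F^\sharp_1$ is a bundle isomorphism. Hence $(F,F^\sharp_1)|_{M''}$ is an honest $L_{(1)}$ isomorphism, in particular an $L_{(1)}$ homotopy equivalence, and Theorem~\ref{thm:A1-spaces} upgrades $\mathfrak{F}|_{M''}\colon (M'',\mathfrak{g}'|_{M''})\to (N'',\mathfrak{h}'|_{N''})$ to an $L_\infty$ homotopy equivalence.

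Finally I would transport this back to $\mathfrak{f}$. From $\mathfrak{i}_N\circ\mathfrak{p}_N\cong\id$ and $\mathfrak{i}_M\circ\mathfrak{p}_M\cong\id$, together with the compatibility of homotopy with composition, one gets $\mathfrak{f}|_{W_M}\cong \mathfrak{i}_N\circ\mathfrak{F}\circ\mathfrak{p}_M$, exhibiting $\mathfrak{f}$ (up to homotopy) as a composite of homotopy equivalences. To get a restriction statement I would choose $U\ni q$ small enough that $p_M(U)\subseteq M''$ and $f(U)\subseteq V$ for a neighborhood $V\ni f(q)$ with $i_N(N'')\subseteq V$, arranged so that $\mathfrak{p}_M|_U\colon (U,\mathfrak{g}|_U)\to (M'',\mathfrak{g}'|_{M''})$ and $\mathfrak{i}_N|_{N''}\colon (N'',\mathfrak{h}'|_{N''})\to (V,\mathfrak{h}|_V)$ remain homotopy equivalences; then $\mathfrak{f}|_U\cong \mathfrak{i}_N|_{N''}\circ\mathfrak{F}|_{M''}\circ\mathfrak{p}_M|_U$ is a composite of $L_\infty$ homotopy equivalences, hence an $L_\infty$ homotopy equivalence. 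I expect this last paragraph to be the main obstacle: one must verify that the charts, their homotopy inverses, and the homotopies witnessing $\mathfrak{i}\circ\mathfrak{p}\cong\id$ all restrict compatibly to the shrunken neighborhoods so that the displayed factorization holds with matching sources and targets. This is plausible because the retractions underlying the charts are the explicit fibrewise scalings of Proposition~\ref{prop:minimal_mu} and Theorem~\ref{thm:minimal}, which restrict to any sufficiently small star-shaped subneighborhood; by contrast the tangent-level and inverse-function-theorem steps are essentially formal once minimality is in place.
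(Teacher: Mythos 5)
Your proposal is correct and follows essentially the same route as the paper: pass to minimal charts at $q$ and $f(q)$, observe that the induced map between the minimal models is a quasi-isomorphism of tangent complexes with zero differential and hence, by the smooth inverse function theorem, a strict isomorphism near the base points, and transport the resulting equivalence back along the charts. The only cosmetic difference is that the paper constructs an explicit strict inverse $\mathfrak{J}^{-1}$ of the middle map and takes $\mathfrak{i}\circ\mathfrak{J}^{-1}\circ\widetilde{\mathfrak{p}}$ as the homotopy inverse of $\mathfrak{f}|_U$, whereas you invoke Theorem~\ref{thm:A1-spaces} and the compatibility of homotopy with composition; the neighborhood-shrinking issue you flag at the end is handled in the paper by exactly the same ``shrink $U$ and $V$ if necessary'' remark.
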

\begin{proof}
	Theorem~\ref{thm:minimal} provides minimal charts at $p$ and $f(p)$ and hence we have the following diagram
	\[
	\begin{tikzcd}[every arrow/.append style={shift left}]
	(L, \mathfrak{a})\arrow[d,"\mathfrak{J}"] \arrow{r}{\mathfrak{i}} &(U, \mathfrak{g}|_{U})  \arrow{l}{{\mathfrak{p}}} \arrow{d}{\mathfrak{f}_U} \\
	(\widetilde{L}, \widetilde{\mathfrak{a}})\arrow[u,"\mathfrak{J}^{-1}", dotted] \arrow{r}{\widetilde{\mathfrak{i}}} &(V, \mathfrak{h}|_{V})  \arrow{l}{\widetilde{\mathfrak{p}}} 
	\end{tikzcd}
	\]
	Here the pairs $\mathfrak{i}, \mathfrak{p}$ and $\widetilde{\mathfrak{i}}, \widetilde{\mathfrak{p}}$ are homotopy inverses and $\mathfrak{J}:=\widetilde{\mathfrak{p}}\circ \mathfrak{f}_U\circ \mathfrak{i}$. It follows from Lemma \ref{lem:qism} that $\mathfrak{J}$ is a quasi-isomorphism at $n_q=:p(q)$, but since $(L, \mathfrak{a})$ and 	$(\widetilde{L}, \widetilde{\mathfrak{a}})$ are minimal at $n_q$ and $J(n_q)$, we conclude that $J$ is a local diffeomorphism and $J_1^\sharp|_{n_q}$ is an isomorphism. After restricting to small neighborhoods $U'$ and $V'$ of $n_q$ and $J(n_q)$ we have that $J$ is a diffeomorphism and $J_1^\sharp$ is an isomorphism of bundles. Therefore we can solve Equation (\ref{eq:comp}) inductively on $n$ to find a strict $L_\infty$ inverse to $J^\sharp$, which we denote by $\mathfrak{J}^{-1}$.
	
	We make the neighborhoods $U$ and $V$ smaller, if necessary, to ensure the restrictions of $\mathfrak{i}, \mathfrak{p}$ (and $\widetilde{\mathfrak{i}}, \widetilde{\mathfrak{p}}$) are homotopy inverses on $(U', \mathfrak{a}|_{U'})$ (and $(V', \widetilde{\mathfrak{a}}|_{V'})$). We define $\mathfrak{K}:=\mathfrak{i}\circ\mathfrak{J}^{-1}\circ\widetilde{\mathfrak{p}}$. By construction $\mathfrak{J}^{-1} \circ\widetilde{\mathfrak{p}}\circ\mathfrak{f}_{U}\cong \mathfrak{p}$, therefore 
	$$\mathfrak{K}\circ \mathfrak{f}_{U} \cong \mathfrak{i}\circ\mathfrak{J}^{-1}\circ\widetilde{\mathfrak{p}}\circ \mathfrak{f}_{U}\cong \mathfrak{i}\circ\mathfrak{p}\cong \id.$$
	Similarly, $ \mathfrak{f}_{U} \circ\mathfrak{i}\circ\mathfrak{J}^{-1} \cong \widetilde{\mathfrak{i}}$, hence $ \mathfrak{f}_{U} \circ\mathfrak{K}\cong  \widetilde{\mathfrak{i}}\circ \widetilde{\mathfrak{p}}\cong \id$. Thus $\mathfrak{K}$ is a homotopy inverse to $\mathfrak{f}_{U}$.

\end{proof}

\end{document}